\documentclass[10pt]{amsart}

\RequirePackage{amsmath}
\RequirePackage{amsthm}
\RequirePackage{amsfonts}
\RequirePackage{amssymb}
\RequirePackage{graphicx}
\RequirePackage{hyperref}
\RequirePackage{algorithm}
\RequirePackage{algorithmic}
\RequirePackage{dcpic}
\RequirePackage[all,dvips,arc,curve,color,frame]{xy} 

\usepackage{srcltx}
\usepackage{pictexwd}
\usepackage{xcolor}

\newenvironment{thma}[1][Theorem]{\begin{trivlist}\item[\hskip \labelsep {\bfseries #1}]}{\end{trivlist}}

\newcommand{\LINEIF}[2]{%
    \STATE\algorithmicif\ {#1}\ \algorithmicthen\ {#2} %
}

{}
{}

\DeclareMathOperator{\diam}{{\bf diam}}

\DeclareMathOperator{\ncl}{{\bf ncl}}

\newcommand{\rb}[1]{{\left( #1 \right)}}


\newcommand{\gp}[1]{{\left\langle #1 \right\rangle}}
\newcommand{\gpr}[2]{{\left\langle #1 \mid #2 \right\rangle}}


\def\CP{{\mathcal P}}

\def\MN{{\mathbb{N}}}
\def\MZ{{\mathbb{Z}}}

\def\MR{{\mathbb{R}}}

\DeclareMathOperator{\code}{{code}}
\DeclareMathOperator{\Cay}{{\bf{}Cay}}
\DeclareMathOperator{\Sch}{{\bf{}Sch}}
\DeclareMathOperator{\rwr}{{\bf{}wr}}
\DeclareMathOperator{\ord}{{\bf{}ord}}

\def\tO{{\tilde O}}

\def\CP{{\mathbf{CP}}}
\def\PP{{\mathbf{PP}}}
\def\WP{{\mathbf{WP}}}

\newtheorem{theorem}{Theorem}[section]
\newtheorem{lemma}[theorem]{Lemma}

\newtheorem{definition}[theorem]{Definition}
\newtheorem{example}[theorem]{Example}

\newtheorem{remark}[theorem]{Remark}
\newtheorem{proposition}[theorem]{Proposition}
\newtheorem*{theorem*}{Theorem}
\newtheorem*{proposition*}{Proposition}

\title[Algorithmic theory of free solvable groups]{Algorithmic theory of free solvable groups:
randomized computations}

\author[A. Ushakov]{Alexander Ushakov}
\address{Stevens Institute of Technology, Hoboken, NJ, 07030 USA}
\email{aushakov@stevens.edu}

\thanks{The author was partially supported by NSF grant DMS-0914773 and by NSA Mathematical Sciences Program grant number
H98230-14-1-0128.\\The author would like to thank Andrey Nikolaev for his helpful and insightful comments.}

\begin{document}

\maketitle

\begin{abstract}
We design new deterministic and randomized algorithms for
computational problems in free solvable groups.
In particular, we prove that
the word problem and the power problem can be solved
in quasi-linear time and the conjugacy problem
can be solved in quasi-quartic time by Monte Carlo type algorithms.

\noindent
{\bf Keywords.} Solvable groups, metabelian groups, word problem, cyclic subgroup membership,
power problem, conjugacy problem, randomized algorithms.

\noindent
{\bf 2010 Mathematics Subject Classification.} 03D15, 20F65, 20F10.
\end{abstract}

\section{Introduction}

The study of algorithmic problems in free solvable groups can be traced to the work \cite{Magnus:1939}
of Magnus, who in 1939 introduced an embedding (now called the {\em Magnus embedding}) of an arbitrary
group of the type $F/[N,N]$ into a matrix group of a particular type with coefficients
in the group ring of $F/N$ (see Section \ref{se:free_solvable_gps} below).  Since
the word problem in free abelian groups is decidable in polynomial time, by induction, this embedding
gives a polynomial time decision algorithm for a fixed free solvable group $S_{r,d}$.
However the degree of the polynomial here grows together with $d$.
An algorithm polynomial in both: the length of a given word and the class $d$ of the free solvable
group was found later in \cite{Miasnikov_Romankov_Ushakov_Vershik:2010}.
It was proved that the word problem has time complexity
$O(r |w| \log_2 |w|)$ in the free metabelian group $S_{r,2}$,
and $O(r d|w|^3)$ in a free solvable group $S_{r,d}$  for $d  \geq 3$.

The general approach to the conjugacy problem in wreath products
was suggested by Matthews in \cite{Matthews:1966} who also described the solution
to the conjugacy problem in free metabelian groups.
The first solution to the conjugacy problem in free solvable groups was given
by Remeslennikov and Sokolov in \cite{Remeslennikov-Sokolov:1970} who
proved that the conjugacy in $S_{r,d}$ can be reduced to the conjugacy
in a wreath product of $S_{r,d-1}$ and a free abelian group.
Later Vassileva showed in \cite{Vassilieva:2011} that the power problem in free solvable groups
can be solved in $O(rd (|u|+|v|)^6)$ time and used that result to
show that the Matthews-Remeslennikov-Sokolov
approach can be transformed into a polynomial time $O(rd(|u|+|v|)^8)$ algorithm.
In this paper we improve the results of \cite{Miasnikov_Romankov_Ushakov_Vershik:2010}
and \cite{Vassilieva:2011}, namely we prove that:

\begin{thma}{\bf \ref{th:deterministic_WP}.}
There exists a quasi-quadratic time $\tO(|w|^2)$ deterministic algorithm
solving the word problem in $S_{r,d}$.
\end{thma}

\begin{thma}{\bf \ref{th:deterministic_PP}.}
There exists a quasi-quadratic time $\tO((|u|+|v|)^2)$ deterministic algorithm
solving the power problem in $S_{r,d}$.
\end{thma}

\begin{thma}{\bf \ref{th:deterministic_CP}.}
There exists a quasi-quintic time $\tO((|u|+|v|)^5)$ deterministic algorithm
solving the conjugacy problem in $S_{r,d}$.
\end{thma}

We can improve these results further if we grant our machine an access to a random number generator.
The price of that improvement is an occasional incorrectness of the result.
Fortunately, we can control the probability of an error:
for any fixed polynomial $p$ we can adjust some internal parameter in the algorithm to guarantee
that the probability of an error converges to $0$ as fast as $O(1/p(n))$.

\begin{thma}{\bf \ref{th:randomized_WP}.}
There exists a quasi-linear time $\tO(|w|)$ false-biased
randomized algorithm solving the word problem in $S_{r,d}$.
\qed
\end{thma}

\begin{thma}{\bf \ref{th:randomized_PP}.}
There exists a quasi-linear time $\tO(|u|+|v|)$ unbiased randomized algorithm
solving the power problem in $S_{r,d}$.
\qed
\end{thma}

\begin{thma}{\bf \ref{th:randomized_CP}.}
There exists a quasi-quartic time $\tO((|u|+|v|)^4)$ unbiased randomized algorithm
solving the conjugacy problem in $S_{r,d}$.
\qed
\end{thma}

Also, we want to mention Theorem \ref{th:geometric_conjugacy} which gives a geometric approach
to the conjugacy problem in free solvable groups.

\begin{thma}{\bf \ref{th:geometric_conjugacy}.}
Words $x,y \in F(X_r)$ represent conjugate elements in $S_{r,d}$
if and only if there exists $z\in F(X_r)$ such that
$zxz^{-1}$ and $y$ define the same flows in the Schreier graph of $\gp{y}$ in $S_{r,d-1}$.
\qed
\end{thma}

\subsection{Randomized algorithms}

A randomized algorithm is an algorithm which uses randomness
as a part of its logic. Typically it uses uniformly random bits
as an auxiliary input to guide its behavior in the hope of achieving
good performance in the average case over all possible choices of random bits.

Historically, the first randomized algorithm was a method developed by
M. Rabin in \cite{Rabin:1976} for the closest pair problem in computational geometry.
The study of randomized algorithms was spurred by the 1977 discovery
of a randomized primality test by R. Solovay and V. Strassen
in \cite{Solovay-Strassen:1977}. Soon afterwards M. Rabin in \cite{Rabin:1980}
demonstrated that the Miller's primality test can be
turned into a very efficient $\tO(\log^2 (n))$ randomized algorithm.
At that time, no practical deterministic algorithm
for primality was known.
Even though a deterministic polynomial-time $\tO(\log^6(n))$ primality
test has since been found (see AKS primality test, \cite{Agrawal-Kayal-Saxena:2004}),
it has not replaced the older probabilistic tests in cryptographic
software nor is it expected to do so for the foreseeable future.
See \cite{Motwani-Raghavan:1995} for more on randomized algorithms.
There are two main types of randomized algorithms:
Las Vegas and Monte Carlo algorithms.

A {\em Monte Carlo algorithm} is a randomized algorithm whose running time is deterministic,
but whose output may be incorrect with a certain (typically small) probability.
For decision problems, these algorithms are generally classified as either false-biased or true-biased. A false-biased Monte Carlo algorithm is always correct when it returns false; a true-biased behaves likewise.
While this describes algorithms with one-sided errors, others might have no bias; these are said to have two-sided errors. The answer they provide (either true or false) will be incorrect,
or correct, with some bounded probability.
The Solovay-Strassen primality test always answers true for prime number inputs;
for composite inputs, it answers false with probability at least $1/2$ and
true with probability at most $1/2$. Thus, false answers from the algorithm are
certain to be correct, whereas the true answers remain uncertain; this is said
to be a $(1/2)$-correct false-biased algorithm.

A {\em Las Vegas algorithm} is a randomized algorithm that always gives correct results;
that is, it always produces the correct result or it informs about the failure.
Las Vegas algorithms were introduced by L\'aszl\'o Babai in 1979, in the context
of the graph isomorphism problem, as a stronger version of Monte Carlo algorithms,
see \cite{Babai:1979}.

\subsection{Algorithmic problems in groups}

Let $F = F_r =  F(X)$ be a free group with a basis   $X=X_r = \{x_1, \ldots, x_r\}$.
By $|w|$ we denote the {\em length} of $w\in F$.
By $\varepsilon$ we denote the empty word.
When $|uv|=|u|+|v|$, then we write $u\circ v$ for $uv$.
Let $R \subseteq F$.
A pair $(X,R)$ defines a {\em presentation} of a group
$G = F / N$ (also denoted by $\gpr{X}{R}$),
where $N = \ncl(R)$ is the {\em normal closure} of $R$ in $F$.
If $R$ is finite [recursively enumerable], then the presentation is called finite [recursively enumerable].
For a recursively presented group $G$ one can study the following algorithmic questions.

\medskip
\noindent{\bf The word problem ($\WP$) in $G=\gpr{X}{R}$:}
Given $w \in F(X)$ decide if $w=1$ in $G$, or not.

\medskip
\noindent{\bf The power problem ($\PP$) in $G=\gpr{X}{R}$:}
Given $u,v \in F(X)$ compute $k\in\MZ$ such that $u=v^k$ in $\gpr{X}{R}$.
If such $k$ does not exist, then return $Fail$.

\medskip
\noindent{\bf The conjugacy problem ($\CP$) in $G=\gpr{X}{R}$:}
Given $u,v \in F(X)$ decide if there exists $c\in F(X)$
satisfying $u=c^{-1}vc$, or not.

\medskip
It is easy to see that decidability/complexity of problems above
does not depend on the generating set $X$.
See \cite{Magnus-Karrass-Solitar:1977,Lyndon-Schupp:2001} for more on
algorithmic problems in groups.

\subsection{$X$-digraphs}
\label{se:flows}

An $X$-\emph{labeled directed graph} $\Gamma$ (or an
$X$-\emph{digraph}) is a pair of sets $(V,E)$ where
the set $V$ is called the {\em vertex set} and the set
$E \subseteq V \times V \times X$ is called the {\em edge set}.
An element $e = (v_1,v_2,x) \in E$ designates an edge with
the {\em origin} $v_1$ (also denoted by $\alpha(e)$), the {\em terminus} $v_2$
(also denoted by $\omega(e)$), labeled by $x$ (also denoted by $\mu(e)$).
We often use notation $v_1\stackrel{x}{\rightarrow} v_2$
to denote the edge $(v_1,v_2,x)$. A {\em path} in $\Gamma$
is a sequence of edges $p=e_1,\ldots,e_k$ satisfying
$\omega(e_i)=\alpha(e_{i+1})$ for every $i=1,\ldots,k-1$.
The {\em origin} $\alpha(p)$ of $p$ is the vertex $\alpha(e_1)$,
the {\em terminus} $\omega(p)$ is the vertex $\omega(e_k)$,
and the {\em label} $\mu(p)$ of $p$ is the word $\mu(e_1),\ldots, \mu(e_k)$.
We say that an $X$-digraph $\Gamma$ is:
\begin{itemize}
    \item
{\em rooted} if it has a special vertex, called the root;
    \item
{\em folded} (or deterministic) if for every
$v \in V$ and $x\in X$ there exists at most one edge with the origin
$v$ labeled with $x$;
    \item
{\em complete} if for every $v_1\in V$ and $x\in X$
there exists an edge $v_1\stackrel{x}{\rightarrow} v_2$;
    \item
{\em inverse} if with every edge
$e=g_1\stackrel{x}{\rightarrow} g_2$ $\Gamma$ also contains the {\em inverse edge}
$g_2\stackrel{x^{-1}}{\rightarrow} g_1$, denoted by $e^{-1}$.
\end{itemize}
All $X$-digraphs in this paper are connected.
A {\em morphism} of two rooted $X$-digraphs is a graph morphism which maps
the root to the root and preserves labels. For more information on $X$-digraphs we refer to
\cite{Stallings:1983,Kapovich_Miasnikov:2002}.

\begin{example}
The {\em Cayley graph} of the group $F/N$, denoted by $\Cay(F/N)$,
is an $X$-digraph $(V,E)$, where $V = F/N$ and
$$
  E=\{ g\stackrel{x}{\rightarrow} gx \mid g\in F/N,\ x\in X^{\pm}\}.
$$
It is an inverse folded complete graph.
We always assume that the trivial element is the root of $\Cay(F/N)$.

Another important example of an $X$-digraph is the {\em Schreier graph}
$\Sch_G(H)$ of a subgroup $H$ of a group $G=F(X)/N$ defined as $(V,E)$:
$$
V = \{Hg \mid g\in G\}
\mbox{ and }
E = \{Hg \stackrel{x}{\rightarrow} Hgx \mid g\in G,\ x\in X^\pm \}.
$$
The coset $H$ is the root of $\Sch_G(H)$.
\qed
\end{example}

Let $\Gamma$ be an inverse $X$-digraph.
Clearly, $(e^{-1})^{-1} = e$. Hence, the set of all edges can be split
into a disjoint union $E = E^+ \sqcup E^-$ satisfying $(E^+)^{-1} = E^-$
and $(E^-)^{-1} = E^+$. The set $E^+$ is called a set of {\em positive edges}
and the set $E^-$ is called a set of {\em negative edges}.

The {\em rank} $r(\Gamma)$ of an inverse $X$-digraph $\Gamma$ is defined as
$|E^+| - |T|,$ where $T$ is any spanning subtree of $\Gamma$.
The fundamental group $\pi_1(\Gamma)$ is the group of labels
of all cycles at the root; it is naturally a subgroup of
$F(X)$ of the rank $r(\Gamma)$ (see \cite{Kapovich_Miasnikov:2002}).

\subsection{Flows on $X$-digraphs}

Let $\Gamma$ be a deterministic inverse $X$-digraph with the root $v$.
A flow on $\Gamma$ is a function $f:E^+(\Gamma) \rightarrow \MZ$
satisfying the following equality $\sigma(v) = 0$, where $\sigma$ is:
$$
\sigma(v)= \sum_{\alpha(e)=v} f(e) - \sum_{\omega(e)=v} f(e)
$$
for all vertices $v\in V(\Gamma)$ except maybe two vertices $s$ and $t$ for which:
$$
\sigma(s) = -\sigma(t) = 1.
$$
The vertex $s$ is called the {\bf source} and the vertex $t$ is called the {\bf sink} of the flow $f$.
If $s$ and $t$ are not defined, then $f$ is called a {\bf circulation}.
In this paper the source is always the root $v$ of $\Gamma$ and, hence, if $\sigma(s)=0$ then the sink
is $v$ as well.

Flows on deterministic connected inverse rooted $X$-digraphs can be defined
by words in $F(X)$ and only by them as follows.
For every word $w\in F(X)$ there exists at most one path $p_w$ in $\Gamma$
with the origin $v$ labeled with $w$, called the {\em trace} of $w$ in $\Gamma$.
If $p_w$ exists, then we can define the flow $\pi_w$ of $w$ on $\Gamma$ which
for every $e\in E^+$ counts the number of times the edge $e$ is traversed minus
the number of times the edge $e^{-1}$ is traversed by $p_w$.
It is also true that for every flow $f$ on $\Gamma$ there exists $w\in F(X)$
such that $f\equiv \pi_w$, see \cite[Lemma 2.5]{Miasnikov_Romankov_Ushakov_Vershik:2010}.

\subsection{Free solvable groups: tools and techniques}
\label{se:free_solvable_gps}

For a free group $F = F^{(0)} = F(X)$ of rank $r$ denote by
$F^{(1)} = [F^{(0)},F^{(0)}]$ the {\em derived} subgroup  of $F$, and by
$F^{(d)} = [F^{(d-1)}, F^{(d-1)}]$ --  the {\em $d$-th derived subgroup}
of $F$, $d\geq 2$. A free solvable group of rank $r$ and class $d$ is defined
as follows:
\begin{itemize}
    \item
$S_{r,0} = F / F^{(0)}$ is a {\em trivial group} of rank $r$,
    \item
$S_{r,1} = F / F^{(1)}$ is a {\em free abelian group} of rank $r$,
    \item
$S_{r,2} = F / F^{(2)}$ is a {\em free metabelian group} of rank $r$, and
    \item
in general, $S_{r,d} = F / F^{(d)}$ is a {\em free solvable group} of rank $r$ and class $d$.
\end{itemize}
In the sequel we usually identify the set $X$ with its canonical images in $S_{r,d}$.
Note that any two consecutive groups in the list above are related to each other:
$S_{r,i} = F/N$ and $S_{r,i+1} = F/[N,N]$, where $N=F^{(i)}$. Hence, naturally,
every general technique for free
solvable groups studies relations between groups of the type $F/[N,N]$ and $F/N$
establishing an inductive step.

One of the most powerful approaches  to study free solvable groups is via the Magnus embedding.
Let $\mathbb{Z} F/N$ be the group ring of $F/N$ with integer coefficients. By
$\gamma:F \rightarrow F/N$ we denote the canonical factorization epimorphism,
as well its linear extension to $\gamma: \mathbb{Z}F \rightarrow \mathbb{Z}F/N$.
Let $T$ be a free (left) $\mathbb{Z}F/N$-module of rank $r$ with a basis $\{t_1, \ldots, t_r\}$.
Then the set of matrices:
$$M(F/N) =
\left \{ \left(
 \begin{array}{ll}
  g & t\\
  0 & 1
  \end{array}
  \right) \bigg{|} g \in F/N, t \in T \right \}
  $$
forms a group with respect to the matrix multiplication. It is easy to see that the group $M(F/N)$
is isomorphic to the restricted wreath product $\MZ^r \rwr F/N$.

\begin{theorem*}[Magnus embedding, \cite{Magnus:1939}]
The homomorphism $\phi:F \rightarrow M(F/N)$ defined by
$$ x_i \stackrel{\phi}{\mapsto} \left(
 \begin{array}{ll}
  x_i^\gamma & t_i\\
  0 & 1
  \end{array}
  \right),
  \ \ \ i = 1, \ldots, r,
  $$
satisfies $\ker \phi = N^\prime$. Therefore, $\phi$ induces a monomorphism
$\phi: F/[N,N] \hookrightarrow M(F/N).$
\end{theorem*}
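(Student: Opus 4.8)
The plan is to make $\phi$ explicit through the top-right corner of the matrices and then reduce the theorem to a classical injectivity property of the Fox free derivatives. Writing $Q = F/N$ and identifying $T$ with $(\MZ Q)^r$ via the basis $t_1,\dots,t_r$, matrix multiplication gives the cocycle rule
\[
\begin{pmatrix} g & t \\ 0 & 1 \end{pmatrix}\begin{pmatrix} h & s \\ 0 & 1 \end{pmatrix} = \begin{pmatrix} gh & gs+t \\ 0 & 1 \end{pmatrix},
\]
so that $\phi(w) = \begin{pmatrix} w^\gamma & \tau(w) \\ 0 & 1 \end{pmatrix}$ where $\tau(uv) = u^\gamma \tau(v) + \tau(u)$. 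First I would check by induction on word length that $\tau(w) = \sum_{i=1}^r \left(\frac{\partial w}{\partial x_i}\right)^\gamma t_i$, where $\frac{\partial}{\partial x_i}$ denote the Fox derivatives on $\MZ F$: the cocycle identity for $\tau$ matches the Leibniz rule $\frac{\partial (uv)}{\partial x_i} = \frac{\partial u}{\partial x_i} + u\frac{\partial v}{\partial x_i}$, and both sides agree on generators since $\frac{\partial x_i}{\partial x_j} = \delta_{ij}$. Hence $w \in \ker\phi$ precisely when $w \in N$ and $\left(\frac{\partial w}{\partial x_i}\right)^\gamma = 0$ for all $i$.

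Next I would dispose of the inclusion $N' \subseteq \ker\phi$ structurally. The translation matrices $\widetilde T = \left\{\begin{pmatrix} 1 & t \\ 0 & 1\end{pmatrix} : t\in T\right\}$ form a normal abelian subgroup of $M(F/N)$ isomorphic to the additive group of $T$, with quotient $Q$; and the composite $\rho$ of $\phi$ with the projection $M(F/N) \to Q$ sends $x_i \mapsto x_i^\gamma$, hence equals $\gamma$. Therefore $\phi(N) \subseteq \widetilde T$, and since $\widetilde T$ is abelian, $\phi(N') = [\phi(N),\phi(N)] = 1$, so $\phi$ factors through $F/N'$. Moreover $\ker\phi \subseteq \ker\rho = N$, and the restriction $\phi|_N$ induces $\bar\phi : N/N' \to T$, $nN' \mapsto \tau(n)$. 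A short diagram chase then shows that $\ker\phi = N'$ is \emph{equivalent} to the injectivity of $\bar\phi$, which is the entire content of the theorem.

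The heart of the argument is therefore to prove that $\bar\phi$ is injective, i.e. that the Fox map $d : N/N' \to (\MZ Q)^r$, $nN' \mapsto \left(\left(\frac{\partial n}{\partial x_1}\right)^\gamma,\dots,\left(\frac{\partial n}{\partial x_r}\right)^\gamma\right)$, has trivial kernel. My plan is first to use the fundamental identity $w - 1 = \sum_i \frac{\partial w}{\partial x_i}(x_i - 1)$ in $\MZ F$, which upon applying $\gamma$ to $w \in N$ shows that $d$ lands in the kernel of the map $(\MZ Q)^r \to \MZ Q$, $(a_i)\mapsto \sum_i a_i(x_i^\gamma - 1)$, exhibiting $d$ as the connecting map of the Fox fundamental sequence. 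I would then establish injectivity by an explicit basis computation. Fix a Schreier transversal $U = \{u_q : q\in Q\}$ for $N$ in $F$, so that $N$ is free on the Schreier generators $\gamma_{u,x} = ux\,\overline{ux}^{\,-1}$ (with $\overline{\cdot}$ the transversal representative), whence $N/N'$ is free abelian on their classes. Computing $\frac{\partial}{\partial x_j}(ux\,\overline{ux}^{\,-1})$ by the Leibniz rule and applying $\gamma$, the middle term contributes the coefficient $1$ at the free $\MZ$-basis element $u^\gamma e_m$ of $(\MZ Q)^r$ (where $x = x_m$), while the remaining terms involve only cosets of proper initial segments of $u$ and of $\overline{ux}$. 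Distinct pairs $(u,x)$ produce distinct leading basis elements $u^\gamma e_m$, so the images of the $\gamma_{u,x}$ form a triangular, hence $\MZ$-linearly independent, system and $d$ is injective.

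I expect the main obstacle to be exactly this last linear-independence bookkeeping: choosing a well-ordering of $Q$ (or an ordering by length of transversal elements) under which the correction terms are strictly smaller than the leading term $u^\gamma e_m$, so that triangularity — and with it injectivity — is rigorous. The remaining ingredients, namely the cocycle computation identifying $\tau$ with the Fox derivatives and the easy inclusion $N' \subseteq \ker\phi$, are routine; it is the injectivity of $d$ that carries the real weight, and it is here that the freeness of $N$ and the Schreier structure of the transversal are indispensable.
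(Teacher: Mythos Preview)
The paper does not prove this statement at all: it is quoted as a classical result with a citation to Magnus (1939), and immediately afterwards the paper states Fox's theorem (that $(\partial u/\partial x_i)^\gamma = 0$ for all $i$ iff $u\in[N,N]$) as an equivalent reformulation, again without proof. So there is no ``paper's own proof'' to compare against; you have supplied a proof where the paper simply invokes the literature.

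That said, your outline is one of the standard routes to the Magnus embedding theorem and is essentially sound. The identification of the $(1,2)$-entry with the tuple of Fox derivatives via the cocycle/Leibniz rule is correct, the structural inclusion $N'\subseteq\ker\phi$ is clean, and the reduction to injectivity of $\bar\phi\colon N/N'\to(\MZ Q)^r$ is exactly the right framing. Your own diagnosis of the sticking point is accurate: the triangularity argument on Schreier generators needs a well-ordering under which the correction terms from $\partial u/\partial x_j$ and $-\partial\,\overline{ux_m}/\partial x_j$ are genuinely ``smaller'' than the leading monomial $u^\gamma e_m$, and this is not automatic because $\overline{ux_m}$ need not be shorter than $u$. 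One way to close this is to bypass triangularity entirely and argue via the Crowell/Fox exact sequence $0\to N/N'\to(\MZ Q)^r\to I_Q\to 0$ (freeness of $N$ gives exactness on the left), or equivalently to observe that your map $d$ realises $N/N'$ as the relation module, which is known to embed in the free module. Either way the substantive content is exactly what you have isolated.
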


The Magnus embedding gives a solution to the word problem for free solvable groups.
Using induction on the solvability class $d$ gives a polynomial estimate $O(r^{d-1}|w|^{2d-1})$
on the complexity of the word problem, see \cite[Section 2.2]{Miasnikov_Romankov_Ushakov_Vershik:2010}

Another important technique for studying free solvable groups was introduced
and studied by R. Fox in a sequence of papers \cite{Fox_calc1,Fox_calc2,Fox_calc3,Fox_calc4}
who invented {\em free differential calculus}.
Recall that a {\em free partial derivative} $\tfrac{\partial w}{\partial x_i}$
of the element $w$ of the group $F/[N,N]$ by $x_i$ is an element
of the group ring $\MZ F/N$ given by the formula:
\begin{equation} \label{eq:derivative}
\frac{\partial w}{\partial x_i} = \sum_{1\le j\le n, ~i_j = i,~ \varepsilon_j=1} x_{i_1}^{\varepsilon_1} \ldots x_{i_{j-1}}^{\varepsilon_{j-1}} - \sum_{1\le j\le n, ~ i_j = i,~ \varepsilon_j=-1} x_{i_1}^{\varepsilon_1} \ldots x_{i_{j}}^{\varepsilon_{j}}.
\end{equation}
The following result is one of the principle technical tools in this area, it follows easily from the Magnus embedding theorem, but in the current form it is due to Fox.

\begin{theorem*}[Fox]
Let $N$ be a normal subgroup of $F$ and $\gamma:\MZ F \to \MZ F/N$ the canonical epimorphism.
Then for every $u \in F$ the following equivalence holds:
$$\forall i  \ \left (\partial u/\partial x_i \right )^{\gamma} = 0 \ \ \Leftrightarrow \ \ u \in [N,N].$$
\end{theorem*}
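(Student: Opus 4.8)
The plan is to read the statement off from the Magnus embedding theorem, exploiting the fact that the Magnus embedding is little more than a bookkeeping device for Fox derivatives. I would first record the \emph{fundamental formula} of free differential calculus: for every $u\in F$,
$$
  u-1 \;=\; \sum_{i=1}^r \frac{\partial u}{\partial x_i}\,(x_i-1) \qquad\text{in }\MZ F .
$$
This is an immediate induction on $|u|$: it holds for $u=\varepsilon$ and for $u=x_j^{\pm1}$ by inspection of \eqref{eq:derivative}, and the inductive step uses the Leibniz-type identity $\partial(uv)/\partial x_i=\partial u/\partial x_i+u\,(\partial v/\partial x_i)$, which is visible directly from the defining sum \eqref{eq:derivative}. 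Applying the ring homomorphism $\gamma$ gives $u^\gamma-1=\sum_{i=1}^r\bigl(\partial u/\partial x_i\bigr)^{\gamma}(x_i^\gamma-1)$ in $\MZ F/N$, so that if all $(\partial u/\partial x_i)^\gamma$ vanish then $u^\gamma=1$, i.e. $u\in N$. This is the only part of the equivalence that is not purely formal once the next step is in place.

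Next I would show that the Magnus embedding realizes the tuple of Fox derivatives: for every $u\in F$,
$$
  \phi(u)=\begin{pmatrix} u^\gamma & \sum_{i=1}^r (\partial u/\partial x_i)^\gamma\,t_i\\ 0 & 1\end{pmatrix}.
$$
Once more this is an induction on $|u|$. For $u=x_j$ it is the definition of $\phi$; for $u=x_j^{-1}$ one inverts the defining matrix, obtaining $\phi(x_j)^{-1}=\left(\begin{smallmatrix}(x_j^{-1})^\gamma & -(x_j^{-1})^\gamma t_j\\ 0&1\end{smallmatrix}\right)$, and compares with $\partial x_j^{-1}/\partial x_i=-\delta_{ij}x_j^{-1}$. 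The inductive step is precisely the observation that
$$
  \begin{pmatrix} g & a\\ 0&1\end{pmatrix}\begin{pmatrix} h & b\\ 0&1\end{pmatrix}=\begin{pmatrix} gh & a+gb\\ 0&1\end{pmatrix}
$$
reproduces, after applying $\gamma$ and writing $a,b$ in the basis $t_1,\dots,t_r$, the Leibniz rule for $\partial(uv)/\partial x_i$.

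Finally I combine the two steps. By the Magnus embedding theorem $u\in[N,N]$ if and only if $\phi(u)$ is the identity matrix, and by the previous step this happens exactly when $u^\gamma=1$ and $(\partial u/\partial x_i)^\gamma=0$ for all $i$ (the coefficients of an element of a free module being uniquely determined). By the fundamental formula the vanishing of all the derivatives already forces $u^\gamma=1$, so the condition collapses to ``$(\partial u/\partial x_i)^\gamma=0$ for all $i$'', which is exactly the asserted equivalence.

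The only place carrying content is the bookkeeping in the second step: one must keep track that $T$ is a \emph{left} $\MZ F/N$-module, so that the accumulated prefixes $x_{i_1}^{\varepsilon_1}\cdots x_{i_{j-1}}^{\varepsilon_{j-1}}$ appearing in \eqref{eq:derivative} act on the left exactly as the running matrix product prescribes, and that the sign-and-shift convention for negative letters in \eqref{eq:derivative} is the one compatible with $\phi(x_j^{-1})$. Everything else is formal manipulation, which is why the paper calls the deduction easy.
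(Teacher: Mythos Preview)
Your proposal is correct and matches precisely the approach the paper indicates: the paper does not actually spell out a proof of Fox's theorem but merely records that ``it follows easily from the Magnus embedding theorem,'' and your argument is exactly the standard way to make that remark precise --- compute the $(1,2)$-entry of $\phi(u)$ as $\sum_i(\partial u/\partial x_i)^\gamma t_i$, invoke $\ker\phi=[N,N]$, and use the fundamental formula to see that the vanishing of the derivatives already forces $u^\gamma=1$. There is nothing to add.
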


Another approach to study elements of groups $F/[N,N]$ comes from geometric flows on $\Cay(F/N)$.
As we discussed in Section \ref{se:flows} a word  $w \in F(X)$
determines a unique path $p_w$ in $\Cay(F/N)$ labeled by $w$
which starts at the root (the vertex corresponding to the identity of $G$). The path $p_w$
further defines  a geometric flow $\pi_w$ on $\Cay(F/N)$.
Figure \ref{fi:diagram1} gives an example of a flow
for a particular word $w$ in $\Cay(F_2/[F_2,F_2])$.
Nonzero values of $\pi_w$ are shown on the edges and zero values are omitted.
\begin{figure}[htbp]
\centerline{ \includegraphics[scale=1]{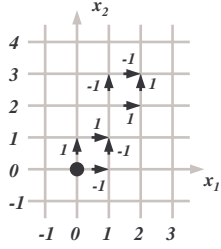} }
\caption{\label{fi:diagram1} The values of $\pi_w$ for $w = x_2
x_1 x_2 x_1 x_2 x_1^{-1} x_2^{-3} x_1^{-1}$ on $(x_1,x_2)$-grid.
In this case  $\partial w/ \partial x_1 = -1 + x_2 -
x_1 x_2^3 + x_1 x_2^2$ and $\partial w/ \partial x_2 = 1 - x_1 +
x_2^2 x_2^2 - x_1 x_2^2$.}
\end{figure}

\begin{theorem*}[\cite{DLS,Vershik_Dobrynin:2004,Miasnikov_Romankov_Ushakov_Vershik:2010}]\label{th:pi}
Let $N$ be a normal subgroup of $F$ and $u,v\in F$. Then $u=v$ in $F/[N,N]$ if and only if
$\pi_u=\pi_v$ in $\Cay(F/N)$.
\qed
\end{theorem*}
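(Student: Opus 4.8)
The plan is to reduce to the case $v=1$ and then recognize the flow $\pi_w$ as a purely geometric repackaging of the Fox derivatives of $w$, so that the assertion collapses into the Fox theorem quoted above.

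\emph{Step 1 (reduction to a single word).} I would first observe that $u=v$ in $F/[N,N]$ iff $w:=uv^{-1}\in[N,N]$, and, separately, that $\pi_u=\pi_v$ iff $\pi_w\equiv 0$. The second equivalence is a short geometric computation: in the complete graph $\Cay(F/N)$ the trace $p_w$ always exists and runs from the root to the vertex $wN$, so the $1$-chain $\pi_w$ has nonzero divergence at the root unless $wN=N$; hence either hypothesis forces $uN=vN$, and then $p_w$ is the concatenation of $p_u$ with the trace of $v^{-1}$ read starting from $uN=vN$, which is just $p_v$ traversed backwards. By additivity of flows along concatenations of paths this gives $\pi_w=\pi_u-\pi_v$, and the two equivalences line up. So it suffices to prove that for any $w\in F$ one has $w\in[N,N]\iff\pi_w\equiv 0$ on $\Cay(F/N)$.

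\emph{Step 2 (flows are Fox derivatives).} Write $w=x_{i_1}^{\varepsilon_1}\cdots x_{i_n}^{\varepsilon_n}$ with $\varepsilon_j\in\{\pm 1\}$ and let $w_{\le j}=x_{i_1}^{\varepsilon_1}\cdots x_{i_j}^{\varepsilon_j}$ be its $j$-th prefix. The positive $x_i$-edges of $\Cay(F/N)$ are the edges $g\stackrel{x_i}{\rightarrow}gx_i$ with $g\in F/N$. Following $p_w$ one step at a time, step $j$ starts at the vertex $w_{\le j-1}N$ and, when $i_j=i$, contributes $+1$ to the edge with origin $w_{\le j-1}N$ if $\varepsilon_j=1$, and $-1$ to the edge with origin $w_{\le j-1}x_i^{-1}N=w_{\le j}N$ if $\varepsilon_j=-1$ (a backward traversal). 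Summing over $j$, the value of $\pi_w$ on the edge $g\stackrel{x_i}{\rightarrow}gx_i$ equals the coefficient of $g$ in $(\partial w/\partial x_i)^{\gamma}\in\MZ F/N$; this is exactly formula \eqref{eq:derivative} rewritten in geometric terms. Consequently $\pi_w\equiv 0$ iff $(\partial w/\partial x_i)^{\gamma}=0$ for every $i$, and by Fox's theorem the latter holds iff $w\in[N,N]$, which finishes the argument.

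I expect the only genuine obstacle to be the sign-and-index bookkeeping in Step 2 — in particular, noticing that a backward traversal of an $x_i$-edge attaches to the \emph{longer} prefix $w_{\le j}$, not to $w_{\le j-1}$, which is precisely the asymmetry already built into \eqref{eq:derivative}; everything else is routine. If one prefers to bypass Fox's theorem, the same conclusion follows homologically: $\Cay(F/N)$ is the cover of the wedge of $r$ circles corresponding to $N\trianglelefteq F=\pi_1$, so $\pi_1(\Cay(F/N))\cong N$ and $H_1(\Cay(F/N);\MZ)\cong N/[N,N]$; for $w\in N$ the cycle $\pi_w$ is the Hurewicz image of $w$, hence it vanishes exactly when $w\in[N,N]$.
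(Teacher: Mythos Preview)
The paper does not supply its own proof of this theorem: it is stated with a \qed and attributed to \cite{DLS,Vershik_Dobrynin:2004,Miasnikov_Romankov_Ushakov_Vershik:2010}. Your argument is correct, and in fact it makes explicit the connection the paper only hints at --- the caption of Figure~\ref{fi:diagram1} records the Fox derivatives alongside the flow, and your Step~2 is precisely the verification that the value of $\pi_w$ on the edge $g\stackrel{x_i}{\rightarrow}gx_i$ is the coefficient of $g$ in $(\partial w/\partial x_i)^\gamma$, after which Fox's theorem finishes the job. The reduction in Step~1 is clean, and your observation that either hypothesis forces $uN=vN$ (so that the additivity $\pi_{uv^{-1}}=\pi_u-\pi_v$ is available) is the only point requiring care there. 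The homological alternative you sketch at the end --- identifying $\pi_w$ with the Hurewicz image of $w\in N=\pi_1(\Cay(F/N))$ in $H_1\cong N/[N,N]$ --- is also sound and is closer to the covering-space perspective of the cited sources.
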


\subsection{Computational model and data representation}
\label{se:Computational_Model}

All computations are assumed to be performed on
a random access machine.
(Quasi-)Linear time is very sensitive to the way
one represents the data, so here we describe precisely
how the inputs are given to us.
We use base $2$ positional number system in which presentations of integers
are converted into integers via the rule:
    $$(a_{k-1}\ldots a_3a_2a_1a_0)_2 = a_{k-1}2^{k-1}+\ldots +a_22^2+a_12+a_0,$$
where we assume that $a_{k-1}=1$. The number $k$ is called the {\em bit-length} of the presentation.
\begin{itemize}
    \item
Adding two numbers of bit-length at most $n$ has $O(n)$ time complexity.
The result is a number of bit-length at most $n+1$.
    \item
Computational complexity of multiplying two $n$-bit numbers is $O(n \log n \log\log n)$
(see \cite{Schonhage-Strassen:1971}). The result is a $2n$-bit number.
\end{itemize}

Let $G$ be a group generated by a finite set $X_r = \{x_1,\ldots,x_r\}$.
We formally encode the word problem for $G$ as a subset of $\{0,1\}^\ast$
as follows. We first encode elements of the set $X_r^\pm = \{x_1^\pm,\ldots,x_r^\pm\}$
by unique bit-strings of length $\lceil \log_2 r\rceil+1$.
The code for a word $w=w(X_r^\pm)$ is a concatenation of codes for letters and, formally:
$$
\WP(S_{r,d}) = \{\code(w) \mid w\in F(X_r),\ w=1 \mbox{ in } S_{r,d}\}.
$$
Thus, the bit-length of the representation for a word $w \in F(X_r)$ is:
    $$|\code(w)| = |w| (\lceil \log_2 r\rceil+1).$$
We encode the power and conjugacy problems in a similar fashion.
For both of these problems instances are pairs of words and the encoding
can be done by introducing a new letter ``,'' into the alphabet $X_r^{\pm}$.

Note that any permutation of $X_r$ induces an automorphism
of a free solvable group and taking an automorphic image of a word
preserves the property of being trivial.
Furthermore, for any word $w$ we can find in linear time in $|\code(w)|$
an appropriate automorphic image satisfying $r\le |w|$.
Therefore, we always assume that $r\le |w|$.

\subsection{Quasi-linear time complexity}

An algorithm is said to run in {\em quasi-linear time} if its time complexity function
is $O(n \log^k n)$ for some constant $k\in \MN$.
We use notation $\tO(n)$ to denote quasi-linear time complexity.
Quasi-linear time algorithms are also $o(n^{1+\varepsilon})$
for every $\varepsilon > 0$, and thus run faster than any polynomial
in $n$ with exponent strictly greater than $1$.
See \cite{Naik-Regan-Sivakumar:1995} for more on quasi-linear time complexity theory.
Similarly, one can define quasi-quadratic $\tO(n^2)$, quasi-cubic $\tO(n^3)$ time complexity
as $O(n^2\log^k n)$, $O(n^3\log^k n)$, etc.

\section{The word problem: deterministic solution}
\label{se:WP}

In this section we present a fast deterministic solution
for the word problem in free solvable groups.

\subsection{Support graphs}

Let $\Gamma$ be a rooted folded inverse $X$-digraph and $m$ the length
of a shortest cycle in $\Gamma$. Suppose that a reduced nontrivial
word $w$ can be traced in $\Gamma$. The set of edges traversed by $w$ in
$\Gamma$ forms a connected $X$-digraph called the {\bf support graph}
of $w$ in $\Gamma$.

\begin{lemma}\label{le:trivial_flow_length}
Let $\Gamma$ be a rooted folded inverse $X$-digraph and $m$ the length
of a shortest cycle in $\Gamma$. Suppose that a reduced nontrivial
word $w$ can be traced in $\Gamma$ and $\pi_w = 0$. Then $|w| \ge 3m$.
\end{lemma}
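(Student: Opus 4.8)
The plan is to argue that a reduced word $w$ whose trace in $\Gamma$ has zero flow must, in a precise sense, travel along every edge of its support graph ``back and forth'', and that the geometry of $\Gamma$ forces enough length. First I would observe that since $\pi_w = 0$, the source and sink coincide, so $p_w$ is a closed path at the root; moreover $p_w$ traverses the support graph $\Delta$ (the $X$-digraph of edges used by $w$). Since $\pi_w(e)=0$ for every positive edge $e$, the path $p_w$ uses each edge of $\Delta$ the same number of times in each direction. The key structural claim is that $\Delta$ cannot be a tree: if $\Delta$ were a tree, then the only closed reduced path at the root in a tree is trivial (a reduced word tracing a closed path in a tree must backtrack), contradicting $w$ reduced and nontrivial. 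Hence $\Delta$ contains a cycle, and since the shortest cycle in $\Gamma$ (hence in $\Delta$) has length $m$, $\Delta$ has at least $m$ positive edges.

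Next I would push this further to get the factor $3$. The idea is that $w$ being reduced means $p_w$ never immediately backtracks an edge, yet $\pi_w = 0$ forces each edge to be traversed equally often in both directions. I would pick a positive edge $e$ in $\Delta$ that is used by $p_w$; it is used at least once in each direction, and these two uses are not consecutive (no backtracking). Consider the first traversal of $e$ (say in the positive direction) and a subsequent traversal of $e^{-1}$. Between the two endpoints-in-time of these traversals, $p_w$ makes a nontrivial excursion; I want to show this excursion, together with the ``return trip'' structure, forces length $\ge 3m$. More carefully: removing the interior of $e$ from $\Delta$ either disconnects $\Delta$ or not. If it disconnects $\Delta$ into components $\Delta_1 \ni \alpha(e)$ and $\Delta_2 \ni \omega(e)$, then because $\pi_w$ vanishes on the cut edge $e$, it must be a circulation on each side, so each of $\Delta_1$, $\Delta_2$ (if it contains any edge traversed by $w$) contains a cycle, giving $\ge m$ edges each; combined with the traversals of $e$ itself and the fact that $p_w$ must cross $e$ at least twice (length $\ge 2$), I assemble $3m$. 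If $e$ is not a cut edge, then $\Delta$ already has rank $\ge 2$, and I count: two independent cycles account for $\ge 2m$ edges, and the connecting path plus the reducedness constraint supply the rest.

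I expect the main obstacle to be the bookkeeping in the non-cut-edge case and, more generally, making the ``$3m$'' bound tight rather than just ``$\ge 2m$''. The clean way is probably to argue via the rank and the homology of $\Delta$: a nonzero element in $H_1$ that is realized by a reduced closed path; since $\pi_w=0$, $w$ represents the trivial homology class, so the reduced closed path $p_w$ must ``wrap around'' cycles in cancelling pairs. A reduced path wrapping around a cycle $C$ of length $m$ and then cancelling it needs to traverse $C$, leave via some detour, and come back — and I would formalize that the minimal such configuration has three ``segments'' each forced to have length $\ge m$ by the girth bound: the cycle itself contributes $m$, the exit-and-return detour contributes $\ge m$ on the way out and $\ge m$ on the way back (or contributes one more cycle's worth). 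This is the step where one must be careful that the detour genuinely costs $m$ and cannot be cheaper; the reducedness of $w$ is exactly what prevents a length-$0$ or short detour, since otherwise $p_w$ would backtrack. Once that is nailed down, concatenating the three segments gives $|w| \ge 3m$.
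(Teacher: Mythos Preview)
Your plan has the right ingredients but two genuine gaps.

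First, the claim ``if $e$ is not a cut edge, then $\Delta$ already has rank $\ge 2$'' is false: a single cycle has rank $1$ and every edge is a non-cut edge. You establish that $\Delta$ is not a tree, but you never rule out rank $1$. The paper handles this directly: since $\Delta$ is the support graph, $p_w$ uses every edge, so $\Delta$ has no leaves (a leaf forces an immediate backtrack). Hence if $\Delta$ had rank $1$ it would be a single cycle, and a reduced closed walk on a cycle winds around with constant nonzero flow, contradicting $\pi_w=0$. Thus $\mathrm{rank}(\Delta)\ge 2$. Your ``circulation on each side'' reasoning in the cut-edge case is also off: $\pi_w\equiv 0$ restricts to the zero circulation on each side, which says nothing about cycles there. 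The correct reason each side contains a cycle is again reducedness (a non-backtracking excursion into a tree is impossible).

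Second, even granting rank $\ge 2$, your count ``two independent cycles account for $\ge 2m$ edges'' fails when the cycles share edges. A minimal rank-$2$ subgraph is either a theta graph (three internally disjoint arcs of lengths $a,b,c$ between two vertices) or two cycles joined by an arc. In the theta case the girth hypothesis only gives $a+b\ge m$, $a+c\ge m$, $b+c\ge m$, hence $a+b+c\ge \tfrac{3m}{2}$, not $2m$. This is exactly enough once combined with the observation you already made, that every edge of $\Delta$ is traversed in both directions: $|w|\ge 2|E(\Delta)|\ge 2(a+b+c)\ge 3m$. The paper's proof is precisely this chain: rank $\ge 2$, then $|w|\ge 2|E(\Delta)|$, then bound $|E(\Delta)|$ via the two possible shapes of a minimal rank-$2$ subgraph. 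Your cut/non-cut split and the ``three segments'' heuristic are attempts to track the path directly; they can be made to work, but only after the rank-$2$ fact is secured, and the edge-count route is shorter and sidesteps the path bookkeeping you yourself flagged as the main obstacle.
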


\begin{proof}
It follows from our assumption $\pi_w = 0$ that the path $p_w$ is a cycle in $\Gamma$.
Let $\Delta$ be the support graph of $w$ in $\Gamma$.
The rank of $\Delta$ can not be $0$ ($w$ is not reduced in this case)
and can not be $1$ (either $w$ is not reduced or $\pi_w\ne 0$).
Therefore, the rank of $\Delta$ is at least $2$.
Each edge of $\Delta$ is traversed by $w$ at least twice.
Hence, it is sufficient to prove that $2|E(\Delta)| \ge 3m$.
Let $\Delta'$ be a minimal subgraph of $\Delta$ of rank exactly $2$.
There are exactly two distinct configurations possible for $\Delta'$,
shown in Figure \ref{fi:support_min}.

\begin{figure}[htbp]
\centerline{ \includegraphics[scale=1]{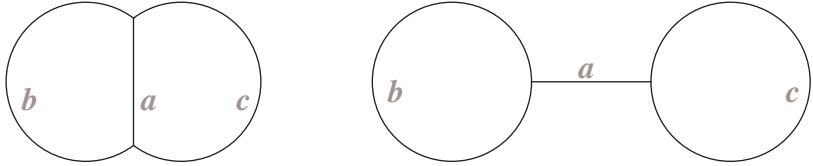} }
\caption{\label{fi:support_min} Two configurations for support graphs in Lemma \ref{le:trivial_flow_length}.}
\end{figure}

Let $a,b,c$ be the lengths of arcs as shown in the figure.
Since, the length of a shortest cycle in $\Gamma$ is $m$, we get the following bounds for our cases:
$$
\begin{array}{llll}
\left\{
\begin{array}{lll}
a+b \ge m, \\
a+c \ge m, \\
b+c \ge m, \\
\end{array}
\right.
&&&
\left\{
\begin{array}{lll}
b \ge m, \\
c \ge m. \\
\end{array}
\right.
\end{array}
$$
In both cases we have $2(a+b+c) \ge 3m$ which proves that $2|E(\Delta)| \ge 3m$.
Thus, $|w| \ge 3m$.
\end{proof}

\begin{proposition}\label{pr:shortest identity}
Let $w \in F(X_r) \setminus \{\varepsilon\}$.
If $w=1$ in $S_{r,d}$, then $|w|\ge 3^d$.
\end{proposition}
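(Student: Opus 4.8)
The plan is to prove this by induction on the solvability class $d$, using Lemma \ref{le:trivial_flow_length} to power the inductive step. For the base case $d=1$, if $w\neq\varepsilon$ is reduced and $w=1$ in $S_{r,1}=\MZ^r$, then $w$ must contain at least one generator and its inverse, so $|w|\ge 2$; in fact the shortest nontrivial relator in a free abelian group has length $4$ (e.g. $x_1x_2x_1^{-1}x_2^{-1}$), so certainly $|w|\ge 3 = 3^1$. (One should double-check whether the intended base case is $d=0$ with the convention that $S_{r,0}$ is trivial, in which case every nonempty reduced word is trivial and the bound $|w|\ge 3^0=1$ holds vacuously — this is actually the cleanest base case.)

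For the inductive step, assume the statement holds for $d-1$ and let $w\in F(X_r)\setminus\{\varepsilon\}$ with $w=1$ in $S_{r,d}$; we may assume $w$ is reduced. Write $S_{r,d}=F/[N,N]$ where $N=F^{(d-1)}$, so that $F/N = S_{r,d-1}$. By the flow criterion (the theorem of \cite{DLS,Vershik_Dobrynin:2004,Miasnikov_Romankov_Ushakov_Vershik:2010} quoted above), $w=1$ in $S_{r,d}$ if and only if $\pi_w = 0$ in $\Cay(S_{r,d-1}) = \Cay(F/N)$. Now I want to apply Lemma \ref{le:trivial_flow_length} with $\Gamma = \Cay(F/N)$: this is a rooted folded inverse $X$-digraph, $w$ is reduced and nontrivial (as a word in $F$) and traces in $\Gamma$, and $\pi_w=0$. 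The lemma then gives $|w|\ge 3m$, where $m$ is the length of a shortest cycle in $\Cay(F/N)$.

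The final step is to identify $m$: a shortest cycle at any vertex of $\Cay(F/N)$ corresponds to a shortest nontrivial element of $N = F^{(d-1)}$ expressed as a reduced word in $F$, equivalently a shortest nonempty reduced word $u\in F(X_r)$ with $u=1$ in $S_{r,d-1}$. By the inductive hypothesis, any such $u$ has $|u|\ge 3^{d-1}$, so $m \ge 3^{d-1}$. Combining, $|w|\ge 3m \ge 3\cdot 3^{d-1} = 3^d$, completing the induction. The main obstacle to watch is the edge cases in the reduction to Lemma \ref{le:trivial_flow_length}: one must make sure $w$ being reduced in $F$ is genuinely available (if the given $w$ is not reduced, first freely reduce it — this only shortens it, and triviality in $S_{r,d}$ is preserved, so proving the bound for the reduced form suffices), and one must confirm that cycles in the Cayley graph based at the root correspond exactly to elements of $N$, which is immediate from the definition $E = \{g\stackrel{x}{\to}gx\}$ since a closed path at the root labeled $u$ exists iff $u\in N$. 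Everything else is bookkeeping.
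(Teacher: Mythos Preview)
Your proof is correct and follows essentially the same approach as the paper: induction on $d$ with base case $d=0$ (which you correctly identify as the cleanest), the flow criterion (Theorem~\ref{th:pi}) to obtain $\pi_w=0$ on $\Cay(S_{r,d-1})$, and Lemma~\ref{le:trivial_flow_length} together with the inductive hypothesis on the girth of $\Cay(S_{r,d-1})$ to get $|w|\ge 3\cdot 3^{d-1}$. The paper's proof is more terse but structurally identical; your additional remarks on reducing $w$ and on why cycles in the Cayley graph correspond to elements of $N$ are correct elaborations.
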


\begin{proof}
Induction on $d$. The length of a shortest nontrivial relator in $S_{r,0}$ is $1$.
Assume that the statement holds for all values of solvability class less than $d$.
Hence, the length of a shortest cycle in $\Cay(S_{r,d-1})$ is not smaller than $3^{d-1}$.
Choose a shortest nontrivial relator $w$ in $S_{r,d}$. By Theorem \ref{th:pi}
$w$ defines the trivial flow in $\Cay(S_{r,d-1})$. By Lemma \ref{le:trivial_flow_length}
$|w|\ge 3^d$.
\end{proof}

\subsection{Distinguishers}

First, we fix some notation. For $a,b\in\MZ$ ($a\le b$) define a set:
    $$[a,b] = \{a,a+1\ldots,b-1,b\}.$$
Fix a reduced word $w = x_{i_1}^{\varepsilon_1}\ldots x_{i_k}^{\varepsilon_k} \in F(X_r)$.
For $j\in[0,k]$ by $w_j$ we denote the initial segment of $w$ of length $j$.
By $e_j$ we denote the edge $w_{j-1} \stackrel{x_{i_j}^{\varepsilon_j}}{\rightarrow} w_j$
traversed by $w$ in $\Cay(S_{r,d})$.
By $\Gamma_{d,w}$ we denote the support graph for $w$ in $\Cay(S_{r,d})$.

A word $w$ is trivial in $S_{r,d}$ if and only if
it defines the trivial flow $\pi_w$ on $\Cay(S_{r,d-1})$.
Obviously, the function $\pi_w$ is trivial outside of the support graph $\Gamma_{d-1,w}$ and, hence:
$$
w=1 \mbox{ in } S_{r,d} \ \ \Leftrightarrow \ \ \pi_w\equiv 0 \mbox{ on }\Gamma_{d-1,w}.
$$
Furthermore, for any $j\in[0,k]$ the graph $\Gamma_{d-1,w_j}$ is a subgraph of $\Gamma_{d-1,w}$ and,
hence, we can view $\pi_{w_j}$ as a flow on $\Gamma_{d-1,w}$.
In particular, $w_i=w_j$ in $S_{r,d}$ if and only if
$w_i$ and $w_j$ define the same flows on $\Gamma_{d-1,w}$.
The algorithm described in this section efficiently constructs
graphs $\Gamma_{d,w}$ and flows $\pi_w$ by induction on $d$.
That goal is achieved via the concept of a distinguisher.

\begin{definition}
Let $w=w = x_{i_1}^{\varepsilon_1}\ldots x_{i_k}^{\varepsilon_k} \in F(X_r)$.
We say that a function $\nu_d:[0,k] \rightarrow [0,k]$ is a {\bf distinguisher}
for $w$ in $S_{r,d}$ if it satisfies the following property:
    $$\nu_d(i)=\nu_d(j) \ \ \Leftrightarrow \ \ w_i=w_j \mbox{ in } S_{r,d}.$$
A function $\epsilon_d:[1,k] \rightarrow [-k,k]\setminus\{0\}$
is called an {\bf edge numbering function} for $w$ in $S_{r,d}$ if:
\begin{itemize}
    \item
$\epsilon_d(i)=\epsilon_d(j)$ if and only if $e_i=e_j$;
    \item
$\epsilon_d(i)=-\epsilon_d(j)$ if and only if $e_i=e_j^{-1}$.
\qed
\end{itemize}
\end{definition}

For any function $\nu:[0,k] \rightarrow [0,k]$ one can construct a rooted $X$-digraph
$\Gamma_\nu = (V,E)$, with:
\begin{equation}\label{eq:nu_support}
V = \{\nu(0),\ldots,\nu(k)\}\ \mbox{ and } \ E = \{\nu(j-1) \stackrel{x_j}{\rightarrow} \nu(j) \mid j=1,\ldots,k\}
\end{equation}
with the root at $\nu(0)$.
It is easy to see that
if $\nu_d$ is a distinguisher for $w$ in $S_{r,d}$, then
$\Gamma_{\nu_d}$ is isomorphic to the support graph $\Gamma_{d,w}$
and does not depend on a choice of a distinguisher $\nu_d$.

\begin{lemma}\label{le:numbering_edges}
Given a distinguisher $\nu_d$ for $w$ it requires quasi-linear time
$\tO(|w|)$ to compute an edge-numbering function $\epsilon$ for $w$.
\end{lemma}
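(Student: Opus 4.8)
The plan is to turn the distinguisher $\nu_d$, which tells us when two initial segments $w_i, w_j$ represent the same vertex of the support graph, into an edge-numbering function by encoding each edge $e_j$ as the ordered pair of its endpoint labels together with its $X$-label, and then assigning consecutive integers to distinct edges. First I would compute, for each $j \in [1,k]$, the triple $\tau(j) := \bigl(\nu_d(j-1),\,\nu_d(j),\,i_j,\,\varepsilon_j\bigr)$, which records the origin $w_{j-1}$, the terminus $w_j$, and the label $x_{i_j}^{\varepsilon_j}$ of the edge $e_j$ in $\Cay(S_{r,d})$. Because $\nu_d$ is a distinguisher, two edges $e_i$ and $e_j$ are equal in the support graph exactly when $\tau(i) = \tau(j)$, and $e_i = e_j^{-1}$ exactly when the origin/terminus of one triple matches the terminus/origin of the other and the labels are mutually inverse. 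So the combinatorial content reduces to: sort the $2k$ triples $\tau(1),\dots,\tau(k)$ and their formal inverses $\bar\tau(1),\dots,\bar\tau(k)$ (where $\bar\tau(j) = (\nu_d(j),\nu_d(j-1),i_j,-\varepsilon_j)$) into a single linear order, group equal triples, and hand out a positive integer to each class of positive edges while its inverse class inherits the negative of that integer.

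Next I would carry out the bookkeeping. Choose, for each pair $\{e_j, e_j^{-1}\}$, a canonical orientation — say, the one whose triple is $\shortlexmin$ among $\tau(j)$ and $\bar\tau(j)$ — so that the set of positive edges is well defined. Sort all $k$ canonical triples; a radix/bucket sort on the four coordinates, each of which is an integer in $[-k,k]$ or in $[1,r] \subseteq [1,k]$, runs in $\tO(k)$ time (or even $O(k)$ with care). Walk the sorted list, incrementing a counter each time the triple changes, thereby assigning to the $\ell$-th distinct positive edge the value $\ell \in [1,k]$. Finally set $\epsilon(j) := \ell$ if $e_j$ was oriented canonically and received label $\ell$, and $\epsilon(j) := -\ell$ otherwise. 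By construction $\epsilon(i) = \epsilon(j) \Leftrightarrow \tau(i)=\tau(j) \Leftrightarrow e_i = e_j$, and $\epsilon(i) = -\epsilon(j) \Leftrightarrow e_i = e_j^{-1}$, so $\epsilon$ is an edge-numbering function; its codomain is contained in $[-k,k]\setminus\{0\}$ as required.

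The running time is dominated by the sort of $\tO(k)$ items each of $\tO(\log k)$ bits (the coordinates are bounded by $k$, and $r \le |w| = k$ by the normalization in Section~\ref{se:Computational_Model}), so comparisons cost $\tO(1)$ and the whole procedure is $\tO(k) = \tO(|w|)$, including the linear passes to build the triples and to read off $\epsilon$. The only mild subtlety — and the step I expect to need the most care — is choosing the canonical orientation consistently and verifying that the ``$e_i = e_j^{-1}$'' equivalence is captured correctly by the sign of $\epsilon$; concretely one must check that an edge and its reverse never land in the same positive class, which follows because their triples differ in the sign coordinate $\varepsilon$ (a self-inverse edge would force $w_{j-1} = w_j$ in $S_{r,d}$ with $x_{i_j}^{\varepsilon_j} \ne 1$, impossible in a free solvable group). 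Everything else is routine sorting and relabeling.
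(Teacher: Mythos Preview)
Your proposal is correct and follows essentially the same approach as the paper: encode each edge $e_j$ by the triple $(\nu_d(j-1),\nu_d(j),x_{i_j}^{\varepsilon_j})$, observe (using $r\le |w|$) that each coordinate fits in $O(\log|w|)$ bits, sort the triples in $\tO(|w|)$ time, and number them; the paper phrases the sort as ``organizing a tree of such bit-strings'' and dismisses the inverse-edge bookkeeping with ``it is easy to check,'' whereas you spell out the canonical-orientation mechanism and the impossibility of self-inverse edges, but the underlying argument is the same.
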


\begin{proof}
Each edge is uniquely defined by a triple $(\nu_d(j),\nu_d(j+1),x_{i_j}^{\varepsilon_j})$.
As we explained in Section \ref{se:Computational_Model}, we may assume that $r\le |w|$.
Hence, such triples can be encoded by bit-strings of length $O(\log_2|w|)$.
Organizing a tree of such bit-strings we can sort them and number lexicographically.
Also, it is easy to check if two edges are inverses of each other.
\end{proof}

Our next goal is to construct a sequence of distinguishers $\nu_0,\ldots,\nu_d$
for a given $w$.
Clearly, we can put $\nu_0 \equiv 0$ because $S_{r,0}$ is the trivial group.
Assume that $\nu_{d-1}$ is constructed.
Below we describe a procedure constructing
a distinguisher $\nu_{d}$.

\begin{proposition}\label{pr:computing_distinguisher}
There exists a deterministic quasi-quadratic algorithm which for
a word $w$ and a distinguisher $\nu_{d-1}$ for $w$
produces a distinguisher $\nu_d$.
\end{proposition}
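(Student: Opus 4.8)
The plan is to build $\nu_d$ out of $\nu_{d-1}$ by encoding, for each initial segment $w_j$, enough information to decide equality $w_i = w_j$ in $S_{r,d}$. Recall that $w_i = w_j$ in $S_{r,d}$ if and only if $w_i = w_j$ in $S_{r,d-1}$ and the flows $\pi_{w_i}$ and $\pi_{w_j}$ on $\Gamma_{d-1,w}$ coincide. The distinguisher $\nu_{d-1}$ already resolves the first condition: it gives, via \eqref{eq:nu_support}, the support graph $\Gamma_{d-1,w} \cong \Gamma_{\nu_{d-1}}$, whose vertices are the distinct values $\nu_{d-1}(0),\ldots,\nu_{d-1}(k)$ and whose edges are $e_1,\ldots,e_k$ read off from consecutive pairs. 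So the task reduces to computing a numerical signature of the partial flow $\pi_{w_j}$ for every $j$, in such a way that two signatures agree exactly when the flows agree and when additionally $\nu_{d-1}(i)=\nu_{d-1}(j)$.

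The key steps, in order, are as follows. First I would apply Lemma~\ref{le:numbering_edges} to $\nu_{d-1}$ to obtain in $\tO(|w|)$ an edge-numbering function $\epsilon_{d-1}$, which identifies the positive edges $E^+(\Gamma_{d-1,w})$ with an interval of integers and tells us the sign with which $w$ traverses each edge at each step. Next, for the flow values themselves: $\pi_{w_j}(e) = \pi_{w_{j-1}}(e) + (\pm 1)$ if step $j$ traverses $e$ or $e^{-1}$, and is unchanged otherwise. Rather than store the whole flow vector for each of the $k+1$ prefixes (which would be quadratic in space and not obviously lead to the stated time bound through the sorting step), I would maintain for each $j$ a hashed/fingerprint value of the vector $\pi_{w_j}$: keep a running current flow, update one coordinate per step, and simultaneously update a fingerprint (e.g. a polynomial-in-a-random-point evaluation, or an incrementally-maintained checksum) so that the fingerprint after step $j$ is computable from the fingerprint after step $j-1$ in $\tO(1)$ arithmetic on $\tO(\log|w|)$-bit numbers. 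This produces, in $\tO(|w|)$ total, a value $\sigma_j$ for each $j$ with the property that $\sigma_i=\sigma_j$ iff $\pi_{w_i}=\pi_{w_j}$ (deterministically, if one uses exact arithmetic on the flow-difference rather than a random fingerprint — since this proposition is the \emph{deterministic} step, I would in fact take the honest route: represent $\pi_{w_i}-\pi_{w_j}$ exactly by noting that two prefixes with $\nu_{d-1}(i)=\nu_{d-1}(j)$ differ by a circulation, whose fingerprint can be compared exactly). Then I would form, for each $j$, the composite key $(\nu_{d-1}(j),\sigma_j)$, which by the characterization above satisfies: the keys for $i$ and $j$ are equal iff $w_i=w_j$ in $S_{r,d}$. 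Finally I would sort the $k+1$ composite keys — each of bit-length $\tO(\log|w|)$, so radix/trie-sorting costs $\tO(|w|)$ — and let $\nu_d(j)$ be the rank of the key of $w_j$ in the sorted order, with $\nu_d(0)=0$ arranged by sorting the root's key first. The overall running time is dominated by the flow-fingerprint maintenance together with arithmetic on the accumulated values, which gives the quasi-quadratic bound $\tO(|w|^2)$ once one accounts for the fact that flow values (and hence the exact quantities being compared) can be as large as $|w|$ and that comparing two full flow vectors exactly costs $\tO(|w|)$ per comparison, $\tO(|w|)$ comparisons in the sort.

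The main obstacle I expect is precisely the tension between determinism and the target complexity: a fingerprint that collapses each flow vector to $\tO(\log|w|)$ bits is what makes the sort cheap, but making that fingerprint \emph{provably collision-free} without randomness is the crux. I would resolve it by exploiting structure rather than hashing arbitrary integer vectors — namely, that $\pi_{w_i}=\pi_{w_j}$ forces $\nu_{d-1}(i)=\nu_{d-1}(j)$ (so we only ever compare flows of prefixes ending at the same vertex), and that the difference of two such flows is a circulation on $\Gamma_{d-1,w}$, which is determined by its values on a set of at most $r(\Gamma_{d-1,w}) \le |w|$ independent cycles; comparing along a fixed spanning tree plus the non-tree edges is exact. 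The honest accounting of this comparison step — $\tO(|w|)$ per pair, folded into a sort of $\tO(|w|)$ items — is what yields $\tO(|w|^2)$ and matches the statement; I would verify carefully that no step secretly costs $|w|^2$ in a worse-than-$\log$ factor, in particular that the integers arising stay of bit-length $\tO(\log|w|)$ except where flow magnitudes legitimately force $O(\log|w|)$-bit entries over an $O(|w|)$-length support.
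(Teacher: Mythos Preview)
Your proposal contains the right ingredients but takes an unnecessary detour, and in doing so explicitly discards the approach that actually works. You write that storing the whole flow vector for each prefix ``would be quadratic in space and not obviously lead to the stated time bound through the sorting step'' --- but quadratic is exactly the target, and this direct approach \emph{is} the paper's proof. Concretely: after numbering the edges of $\Gamma_{d-1,w}$ via Lemma~\ref{le:numbering_edges}, one maintains the full tuple $A_j\in\MZ^m$ encoding $\pi_{w_j}$ (with $m=|E(\Gamma_{d-1,w})|\le|w|$), updating one coordinate per step. Each $A_j$ is a bit-string of length $O(|w|\log|w|)$; inserting all $|w|+1$ such strings into a trie and numbering the leaves lexicographically costs $O(|w|^2\log|w|)=\tO(|w|^2)$, and that numbering \emph{is} $\nu_d$. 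No fingerprints, no circulation decomposition, no spanning-tree bookkeeping.

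Two further remarks. First, your condition ``$w_i=w_j$ in $S_{r,d-1}$ \emph{and} $\pi_{w_i}=\pi_{w_j}$'' is redundant: equal flows on $\Gamma_{d-1,w}$ already force equal sinks, hence equal images in $S_{r,d-1}$, so the composite key $(\nu_{d-1}(j),\sigma_j)$ is unnecessary --- the flow tuple alone suffices. Second, the fingerprint idea you sketch (evaluate the flow vector at a random point, update incrementally) is precisely the content of the \emph{randomized} quasi-linear algorithm in Proposition~\ref{pr:computing_randomized_distinguisher}; it does not belong in the deterministic argument, and your attempt to de-randomize it via circulation structure is both vague and unneeded for the quasi-quadratic bound.
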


\begin{proof}
Using Lemma \ref{le:numbering_edges} we number the edges of $\Gamma_{d-1,w}$
traversed by $w$ in quasi-linear time.
To construct $\nu_d$ we process $w$ letter by letter constructing flows $\pi_{w_0},\ldots,\pi_{w_k}$.
The function $\pi_{w_i}:E(\Gamma_{d-1,w}) \rightarrow \MZ$ counts the algebraic number of
times each edge is traversed by $w_i$. Since our edges are numbered
we can encode $\pi_{w_i}$'s as tuples $A_i$ of length $|E(\Gamma_{d-1,w})|$.
The function $\pi_{w_0}$ is encoded as the tuple of zeros.
Clearly, $A_{i+1}$ can be obtained from $A_i$ by adding $\pm 1$ to a single component.
Each tuple $A_i$ has length $|E(\Gamma_{d-1,w})| \le |w|$ with absolute values of entries bounded by $|w|$.
Hence, it takes $\tO(|w|)$ time to produce $A_{i+1}$ from $A_{i}$.
Thus, our procedure produces $|w|+1$ bit-strings $A_0,\ldots,A_k$ of length $O(|w|\log_2|w|)$
that uniquely represent the initial segments of $w$ as elements of $S_{r,d}$.
We organize these strings into a tree
and number them according to the lexicographic order.
The obtained numbering gives a required distinguisher $\nu_{d}$.

It is straightforward to construct the tree described above.
The size of the tree is $O(|w|^2 \log_2 |w|)$. Hence, the procedure
has quasi-quadratic time complexity.
\end{proof}

\begin{theorem}\label{th:deterministic_WP}
The word problem in $S_{r,d}$ can be solved by a deterministic procedure
in quasi-quadratic time in $|w|$.
\end{theorem}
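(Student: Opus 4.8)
The plan is to assemble the machinery of the preceding subsections into a single inductive procedure. First I would preprocess the input: freely reduce $w$, which takes quasi-linear time, so that from now on $w = x_{i_1}^{\varepsilon_1}\cdots x_{i_k}^{\varepsilon_k}$ is a reduced word with $k=|w|$; if $k=0$ we immediately report $w=1$. Next, applying a suitable permutation automorphism of $S_{r,d}$ as explained in Section \ref{se:Computational_Model}, we may assume $r\le|w|$, which is exactly the hypothesis needed by Lemma \ref{le:numbering_edges} and Proposition \ref{pr:computing_distinguisher}.

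The core of the argument is to compute, by induction on the solvability class, a sequence of distinguishers $\nu_0,\nu_1,\ldots,\nu_d$ for $w$. We start with $\nu_0\equiv 0$, which is a valid distinguisher for $w$ in $S_{r,0}$ since $S_{r,0}$ is trivial, so $w_i=w_j$ in $S_{r,0}$ for all $i,j\in[0,k]$. Given $\nu_{i-1}$, Proposition \ref{pr:computing_distinguisher} produces $\nu_i$ in quasi-quadratic time; after $d$ iterations we obtain $\nu_d$. Correctness then follows directly from the defining property of a distinguisher: since $w_0=\varepsilon$ and $w_k=w$, we have $\nu_d(0)=\nu_d(k)$ if and only if $w=1$ in $S_{r,d}$. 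The algorithm therefore outputs ``$w=1$'' precisely when $\nu_d(0)=\nu_d(k)$ and ``$w\neq 1$'' otherwise.

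For the running time, the preprocessing is quasi-linear and each of the $d$ applications of Proposition \ref{pr:computing_distinguisher} costs $\tO(|w|^2)$; treating $d$ as a fixed constant (or, if one prefers to keep it explicit, noting that the $d$ iterations contribute only a linear factor of $d$, i.e.\ an overall bound of $O(d\,|w|^2\log_2|w|)$), the total running time is quasi-quadratic in $|w|$.

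I do not expect a genuine obstacle here, since all the real work has been pushed into Proposition \ref{pr:computing_distinguisher}. The only points requiring care are that the output $\nu_i$ of one inductive step is a legitimate input to the next — which holds because the hypothesis of Proposition \ref{pr:computing_distinguisher} is precisely ``a distinguisher $\nu_{d-1}$ for $w$'' — and the verification of the base case $\nu_0\equiv 0$, both of which are immediate.
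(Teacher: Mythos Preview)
Your approach is exactly the paper's: iterate Proposition~\ref{pr:computing_distinguisher} starting from $\nu_0\equiv 0$ and read off the answer from whether $\nu_d(0)=\nu_d(k)$. The one substantive point you miss is the paper's use of Proposition~\ref{pr:shortest identity}: if $w$ is reduced, nonempty, and $|w|<3^d$, then automatically $w\neq 1$ in $S_{r,d}$, so one never needs more than $\log_3|w|$ iterations regardless of how large $d$ is. This extra $\log_3|w|$ factor is absorbed into the $\tO$, yielding a bound $\tO(|w|^2)$ that is \emph{uniform in $d$}. Your alternative of treating $d$ as a fixed constant (or leaving the bound as $O(d\,|w|^2\log_2|w|)$) is formally weaker, and in context this matters: the introduction emphasizes that the prior bounds $O(r^{d-1}|w|^{2d-1})$ and $O(rd|w|^3)$ depend on $d$, and the whole point of the present theorem is to remove that dependence. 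So your argument is correct for fixed $d$, but to match the paper's claim you should invoke Proposition~\ref{pr:shortest identity} to cap the number of iterations at $\log_3|w|$.
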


\begin{proof}
Using the procedure described in the proof of Proposition \ref{pr:computing_distinguisher}
we compute distinguishers $\nu_0,\ldots,\nu_d$ for $w$. Computation of $\nu_{i+1}$ from $\nu_i$
requires quasi-quadratic time in $|w|$. It follows from Proposition \ref{pr:shortest identity}
that if $d>\log_3|w|$, then $w\ne 1$ in $S_{r,d}$. Hence, we only need to check
the values of $d\le \log_3|w|$. Thus, we only need to compute up to $\log_3|w|$ distinguishers.
This implies that the procedure is quasi-quadratic in $|w|$.
\end{proof}

This gives the first improvement to the algorithm
described in \cite{Miasnikov_Romankov_Ushakov_Vershik:2010}.

\section{Abstract support graphs}
\label{se:graph_support}

In Section \ref{se:WP} we used support graphs to solve the word
problem in free solvable groups in quasi-quadratic time.
In Section \ref{se:randomized_WP} we design a randomized quasi-linear algorithm
for the same problem. To better understand its behavior (to prove that it is false-biased)
we need a notion of an abstract support graph.
The basic idea is to forget that $w$ is traced in $\Cay(S_{r,d})$ and consider any graph ``covered'' by $w$.

For a word $w = x_{i_1}^{\varepsilon_1}\ldots x_{i_k}^{\varepsilon_k} \in F(X)$
define a rooted inverse $X$-digraph
$\Gamma(w) = (V,E)$:
$$
V = \{w_j \mid j\in [0,k]\}
 \ \mbox{ and }\
E = \{w_{j-1} \stackrel{x_{i_j}^{\varepsilon_j}}{\rightarrow} w_j \mid j\in [1,k]\},
$$
where $w_j = x_{i_1}^{\varepsilon_1}\ldots x_{i_j}^{\varepsilon_j}$, see Figure \ref{fi:Gamma_w}.
\begin{figure}[t]
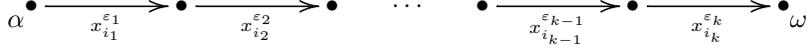

\centerline{
\xygraph{
!{<0cm,0cm>;<2cm,0cm>:<0cm,2cm>::}
!{(0,0)}*+{\bullet}="0"
!{(1,0)}*+{\bullet}="1"
!{(2,0)}*+{\bullet}="2"
!{(3,0)}*+{\bullet}="3"
!{(4,0)}*+{\bullet}="4"
!{(5,0)}*+{\bullet}="5"
"0":@[|(1.5)]"1"_{x_{i_1}^{\varepsilon_1}}
"1":@[|(1.5)]"2"_{x_{i_2}^{\varepsilon_2}}
"3":@[|(1.5)]"4"_{x_{i_{k-1}}^{\varepsilon_{k-1}}}
"4":@[|(1.5)]"5"_{x_{i_k}^{\varepsilon_k}}
!{(2.5,0)}*+{\ldots}="6"
!{(-.1,-.1)}*+{\alpha}
!{(5.1,-.1)}*+{\omega}
}}
\caption{\label{fi:Gamma_w}The graph $\Gamma(w)$.}
\end{figure}
We say that a folded rooted $X$-digraph $\Gamma$ is
a support graph for $w$ if there exists an $X$-digraph epimorphism
$\pi:\Gamma(w)\rightarrow\Gamma$. Note that a morphism $\pi$ is unique for $\Gamma$,
because $\Gamma$ is rooted and folded.
Denote the set of all support graphs for $w$ by $\Omega_w$.
$\Omega_w$ is the set all folded homomorphic images of $\Gamma(w)$.
Hence, it is finite.

Let $\Gamma\in\Omega_w$.
Since every initial segment of $w$ defines a flow on $\Gamma$,
we can define a graph $\iota(\Gamma)$:
$$
V(\iota(\Gamma)) = \{\pi_{w_i} \mid i\in [0,k]\}
 \ \mbox{ and }\
E(\iota(\Gamma)) = \{\pi_{w_{j-1}} \stackrel{x_{i_j}^{\varepsilon_j}}{\rightarrow} \pi_{w_j} \mid j\in [1,k]\}.
$$
It is easy to see that the map $w_i \stackrel{\pi}{\rightarrow} \pi_{w_i}$
defines an epimorphism $\pi:\Gamma(w)\rightarrow \iota(\Gamma)$, i.e., $\iota(\Gamma)\in\Omega_w$.
Hence, the map $\Gamma \mapsto \iota(\Gamma)$ defines a function
$\iota:\Omega_w\rightarrow\Omega_w$.

\begin{proposition}
For any $\Gamma\in\Omega_w$ there exists a (unique) $X$-digraph epimorphism
$\varphi:\iota(\Gamma)\rightarrow \Gamma$. Furthermore the following diagram commutes:
\[
\begindc{0}[10]
\obj(0,5)[A]{$\Gamma(w)$}
\obj(8,5)[B]{$\iota(\Gamma)$}
\obj(8,0)[D]{$\Gamma$}
\mor{A}{B}{}
\mor{A}{D}{}
\mor{B}{D}{$\varphi$}
\enddc
\]
\end{proposition}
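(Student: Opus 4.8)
The plan is to build $\varphi$ from the (unique) $X$-digraph epimorphism $\pi\colon\Gamma(w)\to\Gamma$ that witnesses $\Gamma\in\Omega_w$. On vertices, set $\varphi(\pi_{w_i})=\pi(w_i)$ for every $i\in[0,k]$; on edges, send $\pi_{w_{j-1}}\stackrel{x_{i_j}^{\varepsilon_j}}{\rightarrow}\pi_{w_j}$ to $\pi(w_{j-1})\stackrel{x_{i_j}^{\varepsilon_j}}{\rightarrow}\pi(w_j)$. I then have to verify, in this order: (1) the vertex rule is well defined; (2) $\varphi$ is an $X$-digraph morphism and preserves the root; (3) $\varphi$ is surjective and the triangle commutes; (4) $\varphi$ is unique.

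The only nonformal point is (1). Here I would use that $\pi$ carries the initial path $w_0,w_1,\dots,w_i$ of $\Gamma(w)$ — which is the trace of $w_i$ in $\Gamma(w)$ starting at the root — onto a path in $\Gamma$ starting at the root and labeled $w_i$; by uniqueness of traces in the folded graph $\Gamma$, this image is exactly $p_{w_i}$, so $\pi(w_i)=\omega(p_{w_i})$. Now the flow $\pi_{w_i}$ on $\Gamma$ remembers $\omega(p_{w_i})$: the boundary function $\sigma$ of $\pi_{w_i}$ vanishes identically precisely when $p_{w_i}$ is closed, in which case $\omega(p_{w_i})$ is the root; otherwise $\omega(p_{w_i})$ is the unique vertex at which $\sigma$ equals $-1$, i.e.\ the sink of the flow. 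Since $\sigma$ is computed from the edge values of the flow alone, $\pi_{w_i}=\pi_{w_j}$ forces $\omega(p_{w_i})=\omega(p_{w_j})$, that is, $\pi(w_i)=\pi(w_j)$, which is exactly well-definedness.

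Granting (1), the rest is routine diagram chasing. For (2): $\varphi(\pi_{w_0})=\varphi(\pi_\varepsilon)=\pi(\varepsilon)$ is the root of $\Gamma$, and the generic edge $\pi_{w_{j-1}}\stackrel{x_{i_j}^{\varepsilon_j}}{\rightarrow}\pi_{w_j}$ of $\iota(\Gamma)$ is sent to the $\pi$-image of the edge $w_{j-1}\stackrel{x_{i_j}^{\varepsilon_j}}{\rightarrow}w_j$ of $\Gamma(w)$, hence to a genuine, correctly labeled edge of $\Gamma$. The same two identities show that $\varphi$ composed with the canonical epimorphism $\Gamma(w)\to\iota(\Gamma)$, $w_i\mapsto\pi_{w_i}$, equals $\pi$, which gives the commutativity in (3); surjectivity of $\varphi$ then follows from surjectivity of $\pi$. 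Finally, (4) is the general principle already invoked in the text: a rooted morphism from a connected rooted $X$-digraph into a folded rooted $X$-digraph is determined by tracing labels of paths from the root, and $\iota(\Gamma)$ is connected (it is an image of the connected graph $\Gamma(w)$) while $\Gamma$ is folded, so $\varphi$ is the only such morphism. The main — and essentially the only — obstacle is step (1): the observation that a flow on $\Gamma$ records the endpoint of the path producing it via its boundary; once that is in hand, everything afterwards is formal.
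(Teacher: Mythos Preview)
Your argument is correct and is essentially the same as the paper's: both define $\varphi$ by sending the flow $\pi_{w_i}$ to the endpoint $\omega(p_{w_i})$ of its trace in $\Gamma$ (equivalently, to $\pi(w_i)$), and both justify well-definedness by observing that equal flows have the same sink. Your write-up is in fact more careful than the paper's, since you explicitly treat the circulation case and supply the uniqueness argument that the paper asserts but does not spell out.
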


\begin{proof}
Vertices of $\iota(\Gamma)$ are flows $\pi_{w_i}$ on $\Gamma$.
Each flow $\pi_{w_i}$ has the sink,
which is the endpoint $\omega(p_{w_i})$ of the path $p_{w_i}$ in $\Gamma$.
Hence, we can define a map $\varphi:V(\iota(\Gamma)) \rightarrow V(\Gamma)$ by
$\pi_{w_i} \stackrel{\varphi}{\mapsto} \omega(p_{w_i}).$
It is easy to check that $\varphi$ is an $X$-digraph morphism satisfying
    $$\pi_{w_i}\equiv\pi_{w_j} \ \Rightarrow\  \omega(p_{w_i})=\omega(p_{w_j})$$
Therefore, the diagram indeed commutes.
\end{proof}

\begin{remark}
Let $H = \pi_1(\Gamma)$. The reader can recognize $\iota(\Gamma)$
as the image of $\Gamma(w)$ in the Schreier graph of the subgroup $[H,H]\le F(X)$.
\qed
\end{remark}

The next proposition shows that a sequence of applications of
$\iota$ always ends up with $\Gamma(w)$.

\begin{lemma}\label{le:iota_power}
For any $\Gamma\in \Omega_w$ we have $\iota^{\log_3|w|} (\Gamma) = \Gamma(w)$.
\end{lemma}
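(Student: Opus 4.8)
The plan is to iterate the commuting-triangle structure established in the previous proposition and track how one application of $\iota$ improves the graph toward $\Gamma(w)$. The key observation is that $\iota(\Gamma)$ is, by the remark preceding the lemma, the image of $\Gamma(w)$ in the Schreier graph of the \emph{commutator} subgroup $[H,H]$, where $H=\pi_1(\Gamma)$. Thus if we write $H_0 = F(X)$ and $H_{i+1} = [H_i,H_i]$, then $\iota^{i}(\Gamma(w))$ is the image of $\Gamma(w)$ in the Schreier graph of $H_i = F(X)^{(i)}$, i.e.\ it is precisely the support graph $\Gamma_{i,w}$ of $w$ in $\Cay(S_{r,i})$. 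For a general $\Gamma\in\Omega_w$, the composition $\Gamma(w)\to\Gamma$ realizes $\Gamma$ as an image of $\Gamma(w)$ in the Schreier graph of $H=\pi_1(\Gamma)$; then $\iota(\Gamma)$ corresponds to passing to $[H,H]$, and by induction $\iota^j(\Gamma)$ corresponds to passing to the $j$-th derived subgroup $H^{(j)}$ of $H$. So the statement reduces to: for $H\le F(X)$ with $\Gamma(w)$ mapping into $\Sch(H)$, the image of $\Gamma(w)$ in $\Sch(H^{(j)})$ stabilizes to $\Gamma(w)$ itself once $j\ge\log_3|w|$.

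First I would set up the inductive claim that $\iota^{j}(\Gamma)$ is the support graph of $w$ in the Schreier graph of $H^{(j)}$, using the remark as the base step $j=1$ and the fact that $\pi_1(\iota(\Gamma)) = [\pi_1(\Gamma),\pi_1(\Gamma)]$ (which follows from the Magnus/Fox machinery, since the vertices of $\iota(\Gamma)$ are exactly the flows, i.e.\ cosets modulo $[H,H]$). Next, the core point: the map $\iota^{j}(\Gamma)\to\iota^{j-1}(\Gamma)\to\cdots\to\Gamma$ is a sequence of foldings, and $\iota$ fixes $\Gamma(w)$ (a morphism from $\Gamma(w)$ to a graph already equal to $\Gamma(w)$ is an isomorphism, since $\Gamma(w)$ has no identifications to make — it is the ``freest'' element of $\Omega_w$). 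So $\Gamma(w)$ is a fixed point of $\iota$, and every element of $\Omega_w$ maps onto $\iota^{j}(\Gamma)$ which maps onto $\Gamma$; I want to show the tower $\Gamma(w)=\iota^\infty\to\cdots\to\iota(\Gamma)\to\Gamma$ actually collapses at step $\lceil\log_3|w|\rceil$. For this I would argue by contradiction: if $\iota^{j}(\Gamma)\ne\Gamma(w)$ for $j=\lceil\log_3|w|\rceil$, then $\iota^{j}(\Gamma)$ has a nontrivial identification of two initial segments $w_a=w_b$ that do not already coincide in $\Gamma(w)$, i.e.\ a reduced nontrivial subword $u$ of $w$ (the label of the path from $w_a$ to $w_b$) is trivial in the group $F(X)/H^{(j)}$. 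But $F(X)/H^{(j)}$ contains $F(X)/F(X)^{(j)} = S_{r,j}$ as a quotient only in the wrong direction; instead I would use that $H^{(j)}\supseteq$ nothing helpful directly — rather, observe $H \le F(X)$ so $H^{(j)}$ embeds appropriately and $u=1$ in $F(X)/H^{(j)}$ forces, via Proposition~\ref{pr:shortest identity} applied inside $H$ regarded as a free group on $\pi_1(\Gamma)$, that $|u|_H \ge 3^{j}$ where $|u|_H$ is length in the Schreier graph of $H$ — and since that length is at most $|w| < 3^{j}$, we get a contradiction.

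The main obstacle I anticipate is the last step: cleanly transferring Proposition~\ref{pr:shortest identity} (which is stated for free solvable groups $S_{r,d}$, i.e.\ for the full free group modulo its derived series) into the relative setting where we quotient $H\le F(X)$ by its \emph{own} derived series, with $H$ itself a free group but of possibly infinite rank and embedded nonconvexly in $F(X)$. The right way to handle this is to note that $H=\pi_1(\Gamma)$ is free, that $\Gamma$ together with the lift $\Gamma(w)$ provides a basis-independent combinatorial model, and that Lemma~\ref{le:trivial_flow_length} and the induction of Proposition~\ref{pr:shortest identity} were already phrased graph-theoretically (in terms of shortest cycles, not group words) — so one re-runs that induction with $\Gamma$ in place of $\Cay(S_{r,d-1})$, measuring lengths along $w$ rather than in any ambient free group. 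Concretely: each application of $\iota$ replaces the current graph $\Gamma'$ by one whose shortest cycle has length at least three times that of $\Gamma'$ (by Lemma~\ref{le:trivial_flow_length}, since a cycle in $\iota(\Gamma')$ lifts to a closed path defining the zero flow on $\Gamma'$), so after $j$ steps the shortest cycle has length $\ge 3^{j}$; but any cycle in $\iota^{j}(\Gamma)$ is traced by a subword of $w$, hence has length $\le|w|$, so once $3^{j}>|w|$ the graph $\iota^{j}(\Gamma)$ has no cycles coming from nontrivial identifications beyond those in $\Gamma(w)$, forcing $\iota^{j}(\Gamma)=\Gamma(w)$. Taking $j=\lceil\log_3|w|\rceil$, and noting $\iota$ then fixes $\Gamma(w)$, gives $\iota^{\log_3|w|}(\Gamma)=\Gamma(w)$ for all $\Gamma\in\Omega_w$.
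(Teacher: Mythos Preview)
Your final paragraph (from ``Concretely:'' onward) is exactly the paper's proof: each application of $\iota$ at least triples the length of the shortest cycle by Lemma~\ref{le:trivial_flow_length}, while any cycle in a graph in $\Omega_w$ arises from identifying two prefixes of $w$ and hence has length at most $|w|$, so after $\log_3|w|$ steps no cycles remain and the graph must be $\Gamma(w)$. The preceding material --- the Schreier-graph/derived-subgroup interpretation and the attempted reduction to Proposition~\ref{pr:shortest identity} --- is a detour you correctly abandon; the paper's entire proof is the three-line version you arrive at in the end.
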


\begin{proof}
If $\Gamma = \Gamma(w)$, then there is nothing to prove. Let $m$ be the length of a shortest cycle in $\Gamma$.
By Lemma \ref{le:trivial_flow_length}, the length of a shortest cycle in $\iota(\Gamma)$ is not smaller than $3m$.
Therefore, $\iota^{\log_3|w|} (\Gamma)$ has no cycles, i.e., $\iota^{\log_3|w|} (\Gamma) = \Gamma(w)$.
\end{proof}

\begin{lemma}\label{le:same_flows}
Let $\Gamma,\Delta \in\Omega_w$ and $\varphi:\Gamma\rightarrow\Delta$ be a rooted $X$-digraph morphism.
Assume that $u$ and $v$ can be traced in $\Gamma$ and define equal flows on $\Gamma$.
Then they define equal flows on $\Delta$.
Therefore, if $w$ defines the trivial flow on
$\Gamma$, then it defines the trivial flow on $\Delta$.
\end{lemma}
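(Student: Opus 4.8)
The plan is to reduce everything to the behavior of a single edge under the morphism $\varphi$ and then invoke additivity of flows. The key observation is that a flow on an $X$-digraph is determined edge-by-edge, and $\varphi$ carries positive edges of $\Gamma$ to positive edges of $\Delta$ (up to inversion). So first I would set up the pushforward of flows along $\varphi$: given a flow $f$ on $\Gamma$, define $\varphi_*f$ on $\Delta$ by $(\varphi_*f)(e') = \sum_{\varphi(e)=e'} f(e) - \sum_{\varphi(e)=e'^{-1}} f(e)$, where the sum ranges over positive edges $e$ of $\Gamma$ mapping to $e'$ or its inverse. One checks that $\varphi_*f$ is again a flow on $\Delta$ — the source/sink conditions transfer because $\varphi$ maps the root to the root and the conservation equation at a vertex $v'$ of $\Delta$ is the sum of the conservation equations at the vertices of $\Gamma$ lying over $v'$.

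The main step is then to show that pushforward is compatible with the trace operation: if a word $u$ can be traced in $\Gamma$, then $u$ can be traced in $\Delta$ (apply $\varphi$ to the trace path $p_u^\Gamma$, using that $\Delta$ is folded so the image path is the unique trace) and $\varphi_*(\pi_u^\Gamma) = \pi_u^\Delta$. This is essentially a bookkeeping identity: each time $p_u^\Gamma$ traverses an edge $e$ of $\Gamma$, the image path $p_u^\Delta$ traverses $\varphi(e)$, and the signed counts match by the definition of $\varphi_*$. From here the lemma is immediate: if $\pi_u^\Gamma \equiv \pi_v^\Gamma$, then applying $\varphi_*$ to both sides gives $\pi_u^\Delta = \varphi_*(\pi_u^\Gamma) = \varphi_*(\pi_v^\Gamma) = \pi_v^\Delta$. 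The final sentence is the special case $v = \varepsilon$ (or $u = w$, $v = ww^{-1}$), where the trivial flow pushes forward to the trivial flow.

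I expect the main obstacle to be purely notational rather than mathematical: one must be careful about the convention for which edges of $\Gamma$ are positive versus negative, since a positive edge of $\Gamma$ might map to a negative edge of $\Delta$, and the sign conventions in the definition of $\sigma$ must be tracked consistently through the pushforward. There is also a minor point that $\varphi$ need not be injective on edges, so several edges of $\Gamma$ (with possibly cancelling flow values) can collapse onto one edge of $\Delta$; this is exactly why the pushforward is defined with a sum, and it is precisely the mechanism by which a nontrivial flow on $\Gamma$ can become trivial on $\Delta$ — so no contradiction arises, and in fact the lemma only claims the one-way implication. Once the pushforward is correctly set up, the verification of the conservation law and of trace-compatibility is routine, so I would state these as two short sublemmas and dispatch them without detailed computation.
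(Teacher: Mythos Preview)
Your approach is essentially the paper's, just more elaborate: the paper compresses everything into the single identity $\pi_u^{(\Delta)}(e) = \sum_{f\in E(\Gamma),\ \varphi(f)=e} \pi_u^{(\Gamma)}(f)$ and applies it directly, without naming the pushforward or checking that it preserves the flow axioms (which is not actually needed for the conclusion). Your worry about positive edges of $\Gamma$ mapping to negative edges of $\Delta$ is moot here, since an $X$-digraph morphism preserves labels and $E^+$ consists precisely of the edges labeled by elements of $X$, so your formula for $\varphi_*$ simplifies to the paper's single sum.
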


\begin{proof}
Let $\pi_u^{(\Gamma)},\pi_v^{(\Gamma)}:E(\Gamma)\rightarrow\MZ$ and
$\pi_u^{(\Delta)},\pi_v^{(\Delta)}:E(\Delta)\rightarrow\MZ$
be flows defined by $u$ and $v$ in $\Gamma$ and in $\Delta$ respectively.
Then for an arbitrary $e\in E(\Delta)$:
    $$\pi_u^{(\Delta)}(e) = \sum_{f\in E(\Gamma),\ \varphi(f)=e} \pi_u^{(\Gamma)}(f) = \sum_{f\in E(\Gamma),\ \varphi(f)=e} \pi_v^{(\Gamma)}(f) = \pi_v^{(\Delta)}(e).$$
Hence, $\pi_u^{(\Delta)} \equiv \pi_v^{(\Delta)}$.
By the same formula $\pi_u^{(\Gamma)} \equiv 0$ implies $\pi_u^{(\Delta)} \equiv 0$.
\end{proof}

\begin{proposition}\label{pr:main_diagram}
Let $\Gamma,\Delta \in\Omega_w$ and $\varphi:\Gamma\rightarrow\Delta$
be an epimorphism. Then there exists an epimorphism $\psi:\iota(\Gamma)\rightarrow\iota(\Delta)$
such that the diagram below commutes:
\[
\begindc{0}[10]
\obj(0,5)[A]{$\iota(\Gamma)$}
\obj(8,5)[B]{$\Gamma$}
\obj(0,0)[C]{$\iota(\Delta)$}
\obj(8,0)[D]{$\Delta$}
\mor{A}{B}{$\tau_1$}
\mor{A}{C}{$\psi$}
\mor{B}{D}{$\varphi$}
\mor{C}{D}{$\tau_2$}
\enddc
\]
\end{proposition}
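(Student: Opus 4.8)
The plan is to define $\psi$ on vertices of $\iota(\Gamma)$ — which are the flows $\pi_{w_i}^{(\Gamma)}$ — by sending $\pi_{w_i}^{(\Gamma)}$ to $\pi_{w_i}^{(\Delta)}$, i.e., to push a flow on $\Gamma$ forward along $\varphi$ to a flow on $\Delta$. The first thing to check is that this is well-defined: if $\pi_{w_i}^{(\Gamma)} = \pi_{w_j}^{(\Gamma)}$ then $\pi_{w_i}^{(\Delta)} = \pi_{w_j}^{(\Delta)}$. This is exactly Lemma \ref{le:same_flows} applied with $u = w_i$ and $v = w_j$. So the vertex map is well-defined, and it clearly extends to edges: the edge $\pi_{w_{j-1}}^{(\Gamma)} \stackrel{x_{i_j}^{\varepsilon_j}}{\rightarrow} \pi_{w_j}^{(\Gamma)}$ is sent to $\pi_{w_{j-1}}^{(\Delta)} \stackrel{x_{i_j}^{\varepsilon_j}}{\rightarrow} \pi_{w_j}^{(\Delta)}$, which is an edge of $\iota(\Delta)$ by construction. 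Since every vertex and edge of $\iota(\Gamma)$, resp. $\iota(\Delta)$, arises from some initial segment $w_i$, the map $\psi$ is surjective, hence an epimorphism. It sends the root $\pi_{w_0}^{(\Gamma)}$ (the zero flow on $\Gamma$) to $\pi_{w_0}^{(\Delta)}$ (the zero flow on $\Delta$), so it is a rooted morphism.

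Next I would verify commutativity of the square. The map $\tau_1 : \iota(\Gamma) \rightarrow \Gamma$ is the morphism $\varphi$ from the earlier proposition (with $\Gamma$ in the role there), sending $\pi_{w_i}^{(\Gamma)} \mapsto \omega(p_{w_i}^{(\Gamma)})$, the sink of the flow; similarly $\tau_2$ sends $\pi_{w_i}^{(\Delta)} \mapsto \omega(p_{w_i}^{(\Delta)})$. So I must check $\varphi \circ \tau_1 = \tau_2 \circ \psi$ on each vertex $\pi_{w_i}^{(\Gamma)}$: the left side is $\varphi(\omega(p_{w_i}^{(\Gamma)}))$, the right side is $\omega(p_{w_i}^{(\Delta)})$. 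These agree because $\varphi$ is a rooted morphism $\Gamma \to \Delta$, so it carries the trace of $w_i$ in $\Gamma$ to the trace of $w_i$ in $\Delta$, and in particular carries the endpoint of the former to the endpoint of the latter. This is a routine unwinding of the definitions; it is essentially the statement that tracing a word commutes with graph morphisms.

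The main obstacle — though it is more a matter of care than of depth — is the well-definedness of $\psi$ on vertices, i.e., making sure that Lemma \ref{le:same_flows} genuinely applies in the form needed. One must confirm that $w_i$ and $w_j$ are traceable in $\Gamma$ (they are, as initial segments of $w$, which is traceable in $\Gamma$ since $\Gamma \in \Omega_w$) and that the hypothesis $\varphi : \Gamma \to \Delta$ is a morphism (given). Once the lemma is invoked, everything else is formal. I would also remark that the uniqueness of $\psi$ is automatic: $\iota(\Delta)$ is rooted and folded, so any rooted label-preserving morphism out of the connected graph $\iota(\Gamma)$ is determined by where it sends the root, hence unique — this is the same observation used for $\pi$ and $\varphi$ earlier in the section. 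This completes the plan.
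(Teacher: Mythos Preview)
Your proposal is correct and follows essentially the same approach as the paper: define $\psi$ by $\pi_{w_i}^{(\Gamma)} \mapsto \pi_{w_i}^{(\Delta)}$, invoke Lemma~\ref{le:same_flows} for well-definedness, check it is a label-preserving epimorphism on edges, and verify commutativity by noting that $\varphi$ carries the endpoint of the trace of $w_i$ in $\Gamma$ to the endpoint of the trace of $w_i$ in $\Delta$. Your write-up is in fact slightly more careful than the paper's (you explicitly address root preservation, surjectivity, and uniqueness), but the argument is the same.
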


\begin{proof}
By definition $V(\iota(\Gamma)) = \{\pi_{w_i}^{(\Gamma)} \mid i\in[0,k]\}$
and $V(\iota(\Delta)) = \{\pi_{w_i}^{(\Delta)} \mid i\in[0,k]\}$. The map
$\psi:V(\iota(\Gamma)) \rightarrow V(\iota(\Delta))$ given by:
    $$\pi_{w_i}^{(\Gamma)} \ \stackrel{\psi}{\mapsto} \ \pi_{w_i}^{(\Delta)},$$
is well defined by Lemma \ref{le:same_flows}.
The map $\psi$ takes an edge
$\pi_{w_{i-1}}^{(\Gamma)} \stackrel{x_i}{\rightarrow} \pi_{w_i}^{(\Gamma)}$
in $\Gamma$ to the edge
$\pi_{w_{i-1}}^{(\Delta)} \stackrel{x_i}{\rightarrow} \pi_{w_i}^{(\Delta)}$
in $\Delta$. Therefore, $\psi$
preserves connectedness and labels and is indeed
an $X$-digraph epimorphism.

Finally we note that for any $\pi_{w_{i-1}}^{(\Gamma)} \in V(\iota(\Gamma))$
we have
$\tau_1(\pi_{w_{i-1}}^{(\Gamma)})$ is the endpoint of $w_i$ traced in $\Gamma$.
Similarly, $\tau_2(\psi(\tau_1(\pi_{w_{i-1}}^{(\Gamma)})))$ is the endpoint
of $w_i$ traced in $\Delta$. Since, $\varphi$ is an $X$-digraph morphism
taking the root to the root, we have a commuting diagram.
\end{proof}

\subsection{Language support graphs}
\label{se:language_support}

Definition of a word support graph can be generalized to any set of words
$S\subseteq F(X)$ as follows. Define a {\em prefix tree} $T=T(S)$:
$$
V(T) = \{u \in F(X) \mid u\circ v \in S\} \ \mbox{ and }\  E(T) = \{w \stackrel{x}{\rightarrow} wx \mid w,wx\in S,\ x\in X^\pm\}.
$$
We say that a rooted inverse $X$-digraph $\Gamma$ is a {\em support graph}
for $S$ if there exists an $X$-digraph epimorphism $T\rightarrow \Gamma$.
Assume that $S$ is finite.
The (finite) set of all support graphs for $S$ is denoted by $\Omega_S$.
For any $\Gamma\in\Omega_S$ we can define the graph $\iota(\Gamma) =  (V,E)$:
$$
V(\iota(\Gamma)) = \{\pi_{w} \mid w\in V(T(S))\}
 \ \mbox{ and }\
E(\iota(\Gamma)) = \{\pi_{w} \stackrel{x}{\rightarrow} \pi_{wx} \mid w,wx \in V(T(S))\}.
$$
It easy to check that all results in this section for word support graphs hold for
language support graph as well. Lemma \ref{le:iota_power} can be reformulated as follows:

\begin{lemma}\label{le:iota_power_T}
Let $S$ be a finite subset of $F(X_r)$ and $d$ the diameter of the tree $T(S)$.
Then for any $\Gamma\in \Omega_S$ we have $\iota^{\log_3 d} (\Gamma) = T(S)$.
\qed
\end{lemma}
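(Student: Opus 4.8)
The plan is to imitate the proof of Lemma~\ref{le:iota_power} verbatim, replacing ``length of a shortest cycle'' by ``diameter of the prefix tree'' throughout, since Lemma~\ref{le:iota_power_T} is precisely the language-support-graph analogue of Lemma~\ref{le:iota_power}. First I would dispose of the trivial case: if $\Gamma = T(S)$, there is nothing to prove. Otherwise $\Gamma$ has a nontrivial cycle; let $m$ denote the length of a shortest cycle in $\Gamma$. The key input is the analogue of Lemma~\ref{le:trivial_flow_length} for language support graphs, which (as the paragraph preceding the statement asserts) holds by the same argument: if a reduced nontrivial word traces a trivial flow in $\Gamma$, it has length at least $3m$. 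Since every cycle at the root of $\iota(\Gamma)$ is obtained from a word $w$ that traces a trivial flow on $\Gamma$ and is a label of a cycle in $\iota(\Gamma)$, such a $w$ must have length $\ge 3m$; hence the shortest cycle in $\iota(\Gamma)$ has length at least $3m$.

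Next I would iterate. Applying $\iota$ once multiplies the shortest-cycle length by at least $3$; so after $j$ applications the shortest cycle in $\iota^{j}(\Gamma)$ has length at least $3^{j} m \ge 3^{j}$. On the other hand, any cycle in $\iota^{j}(\Gamma)$ is the image of a closed path in $T(S)$, and a closed reduced path in $T(S)$ — being a tree of diameter $d$ — cannot be too long: in fact a support graph of $S$ built from a tree of diameter $d$ cannot contain a cycle longer than $2d$ unless... more carefully, the relevant bound is that once $3^{j} > d$ there can be no cycle at all, because a cycle forces a back-and-forth traversal in $T(S)$ whose ``width'' is bounded by $d$. So for $j = \log_3 d$ we get that $\iota^{j}(\Gamma)$ is acyclic, folded, rooted, and receives an epimorphism from $T(S)$; an acyclic folded rooted image of a tree under a label-preserving root-preserving epimorphism must be the tree itself, i.e. $\iota^{\log_3 d}(\Gamma) = T(S)$.

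The one step I expect to need genuine care is the second sentence of the previous paragraph: pinning down exactly why a cycle in a support graph of $S$ corresponds to a word of length bounded in terms of $d = \diam T(S)$, so that $3^{j} m > d$ (equivalently $j > \log_3 d$) forces acyclicity. In the word case this is automatic because the relevant path $p_w$ lives in $\Gamma(w)$, which is literally a segment of length $|w|$, and ``no cycle of length $> |w|$'' is trivial; here $T(S)$ is a branching tree rather than a segment, so I would phrase it as: a reduced closed path in $T(S)$ based at the root has length at most $2d$ (it goes down to some vertex and back), hence $\pi_1(T(S)) = 1$ forces any nontrivial cycle in a support graph to have a label of length at most $2d < 3^{j}$ once $j \ge \log_3 d$, contradicting the shortest-cycle lower bound — so no such cycle exists. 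With that in place the rest is a line-by-line transcription of the proof of Lemma~\ref{le:iota_power}, and since the statement is marked \qed in the excerpt, a one-paragraph proof along these lines (or simply the remark ``the proof of Lemma~\ref{le:iota_power} applies verbatim with $m$ replaced by $d$'') suffices.
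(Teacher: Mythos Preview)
Your overall plan matches the paper exactly: it gives no separate argument, merely remarking that all word-support-graph results carry over and marking the lemma with \qed.

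The step you flag as needing care is, however, not resolved by your proposed argument. A cycle in $\Delta \in \Omega_S$ is \emph{not} the image of a closed path in $T(S)$: since $T(S)$ is a tree it has no nontrivial reduced closed paths at all, and your ``down to some vertex and back'' path is not reduced, so the sentence ``a reduced closed path in $T(S)$ based at the root has length at most $2d$'' is vacuous and the appeal to $\pi_1(T(S))=1$ does no work. The correct observation --- already implicit in the word case, where $\Gamma(w)$ is a segment of diameter $|w|$ --- is this: if $\Delta\in\Omega_S$ is not a tree then the epimorphism $T(S)\to\Delta$ identifies two distinct vertices $p,q\in V(T(S))$; choose such a pair at minimal distance $\ell$ in $T(S)$, so $\ell\le d$. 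The geodesic from $p$ to $q$ in $T(S)$ carries a reduced label and maps to a closed reduced walk of length $\ell$ in $\Delta$, and minimality of $\ell$ forbids intermediate identifications, so this walk is a simple cycle. Hence the girth of every $\Delta\in\Omega_S$ is at most $d$ (not $2d$). Since each $\iota^j(\Gamma)$ belongs to $\Omega_S$, its girth, when finite, is simultaneously at least $3^j$ and at most $d$; once $3^j>d$ the graph is acyclic and therefore equal to $T(S)$, exactly as in Lemma~\ref{le:iota_power}.
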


\section{The word problem: randomized solution}
\label{se:randomized_WP}

In this section we improve quasi-quadratic procedure
described in Proposition \ref{pr:computing_distinguisher}, we make it quasi-linear.
Let $w=x_{i_1}^{\varepsilon_1}\ldots x_{i_k}^{\varepsilon_k} \in F(X)$,
$\Gamma_{d-1}$ be the support graph for $w$ in $S_{r,d-1}$, and $m = |E(\Gamma_{d-1})|$.
Recall that the algorithm computes the set of tuples $A_0,\ldots,A_k\in\MZ^m$
that represent the flows $\pi_{w_0},\ldots,\pi_{w_k}$ on $\Gamma_{d-1}$.
Tuples $A_i$ are further encoded as bit-strings of lengths $O(|w| \log_2|w|)$).
Hence, we deal with $|w|$ objects of size $|w|$ which makes quadratic complexity.

To improve complexity we choose a point $A(a_1,\ldots,a_m) \in [0,|w|^3]^m$ uniformly randomly
and replace $A_i$'s with the numbers $d^2(A,A_i)$, where $d$ is the Euclidean distance in $\MZ^m$.
Those numbers have bit-lengths $O(\log_2|w|)$.
The next two lemmas are concerned with complexity of computing the numbers $d^2(A,A_i)$.

\begin{lemma}\label{le:initial_distance}
It requires $\tO(|w|)$ time to compute $d^2(A,A_0)=\sum_{i=1}^m a_i^2$.
\end{lemma}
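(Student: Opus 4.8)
The plan is to observe that computing $d^2(A,A_0)=\sum_{i=1}^m a_i^2$ is simply a matter of squaring $m$ integers and summing them, and then to bound each of these operations using the complexity facts recorded in Section \ref{se:Computational_Model}. First I would recall that the random point $A=(a_1,\ldots,a_m)$ was chosen in $[0,|w|^3]^m$, so each coordinate $a_i$ is a nonnegative integer of bit-length $O(\log_2|w|)$. The support graph $\Gamma_{d-1}$ has $m = |E(\Gamma_{d-1})| \le |w|$ edges (each edge of the support graph is traversed by $w$, so there are at most $|w|$ of them), hence the sum has at most $|w|$ terms.

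Next I would estimate the cost term by term. By the Sch\"onhage--Strassen bound quoted in Section \ref{se:Computational_Model}, multiplying two numbers of bit-length $O(\log_2|w|)$ — in particular, squaring $a_i$ — costs $O(\log_2|w| \cdot \log\log|w| \cdot \log\log\log|w|)$, which is $\tO(1)$ in $|w|$; the product $a_i^2$ has bit-length $O(\log_2|w|)$. Doing this for all $m \le |w|$ coordinates costs $\tO(|w|)$ in total. Then I would accumulate the running sum: after adding the first $j$ squares the partial sum is at most $j\cdot|w|^6 \le |w|^7$, so it always has bit-length $O(\log_2|w|)$, and each of the $m \le |w|$ additions costs $O(\log_2|w|)$ by the addition bound. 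Summing over all additions again gives $\tO(|w|)$. Combining the two phases yields the claimed $\tO(|w|)$ bound.

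There is really no serious obstacle here; the only mild subtlety worth stating explicitly is the bookkeeping that keeps every intermediate integer of bit-length $O(\log_2|w|)$ so that each arithmetic step is genuinely $\tO(1)$ rather than growing — this is why one accumulates a running total instead of, say, forming a length-$m$ list of squares and summing via a balanced binary tree (which would also work but requires a separate argument that the bit-lengths stay controlled). Since both $m$ and every coordinate are polynomially bounded in $|w|$, all logarithms involved are $O(\log_2|w|)$, and the total is dominated by $m$ constant-size (up to polylog factors) arithmetic operations, giving $\tO(|w|)$.
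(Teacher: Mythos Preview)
Your argument is correct and follows essentially the same route as the paper: bound the bit-length of each $a_i$ by $O(\log_2|w|)$ since $a_i\in[0,|w|^3]$, square each of the $m\le|w|$ coordinates via Sch\"onhage--Strassen in $O(\log|w|\log\log|w|\log\log\log|w|)$ time, and then add the resulting $O(\log|w|)$-bit numbers in $O(|w|\log|w|)$ total. Your explicit bookkeeping on the running partial sum is a minor elaboration but not a different idea.
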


\begin{proof}
For every $i=1,\ldots,|w|$ the bit-length of $a_i$ is bounded by $3\log |w|$.
Sch\"onhage-Strassen algorithm requires $O(\log |w| \log\log |w| \log\log\log |w|)$
steps to compute each $a_i^2$. The bit-length of $a_i^2$ is bounded by $6\log |w|$.
Finally, it requires $O(|w|\log |w|)$ steps to sum $m$
obtained squares each of length $6\log |w|$.
Thus, the total complexity is $O(|w|\log |w| \log\log |w| \log\log\log |w|)$.
\end{proof}

Let $\overline{AA}_i = (A_{i,1},\ldots,A_{i,m})$. The vectors
$\overline{AA}_i$ and $\overline{AA}_{i+1}$ differ at a single, say $j$th, component
and $|A_{i,j}-A_{i+1,j}|=1$. Therefore,
\begin{equation}\label{eq:distance_Ai}
d^2(A,A_{i+1})-d^2(A,A_i) =
\begin{cases}
2|A_{i,j}|+1 & \mbox{if } |A_{i+1,j}|>|A_{i,j}|,\\
-2|A_{i,j}|+1 & \mbox{if } |A_{i+1,j}|<|A_{i,j}|.\\
\end{cases}
\end{equation}

\begin{lemma}\label{le:distance_bound}
Given $\overline{AA}_i$ and $d^2(A,A_i)$ it takes $O(\log|w|)$
time to compute $\overline{AA}_{i+1}$ and $d^2(A,A_{i+1})$.
\end{lemma}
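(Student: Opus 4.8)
The plan is to show that the update from step $i$ to step $i+1$ touches only a bounded amount of data, each piece of size $O(\log|w|)$, so that a constant number of $O(\log|w|)$-cost arithmetic operations suffices. First I would recall what must be maintained: the vector $\overline{AA}_i = (A_{i,1},\ldots,A_{i,m})$ of signed coordinate differences between the fixed random point $A$ and the current flow $A_i$, together with the scalar $d^2(A,A_i)$. When we move from $w_i$ to $w_{i+1}$ by appending the letter $x_{i_{i+1}}^{\varepsilon_{i+1}}$, the flow $\pi_{w_{i+1}}$ differs from $\pi_{w_i}$ by $\pm 1$ on exactly one edge of $\Gamma_{d-1}$ — the edge traversed by that letter, whose index $j$ we can read off in $O(\log|w|)$ time from the edge-numbering function of Lemma \ref{le:numbering_edges} (the edge number is bounded by $m \le |w|$, so it is an $O(\log|w|)$-bit quantity). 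Consequently $\overline{AA}_{i+1}$ agrees with $\overline{AA}_i$ in all but the $j$th coordinate, where it changes by $\pm 1$; updating the array in place is a single write of an $O(\log|w|)$-bit number.

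Next I would update the scalar using the identity \eqref{eq:distance_Ai}. Since $d^2(A,A_i) = \sum_{\ell} A_{i,\ell}^2$ and only the $j$th term changes, we have $d^2(A,A_{i+1}) - d^2(A,A_i) = A_{i+1,j}^2 - A_{i,j}^2 = (A_{i+1,j} - A_{i,j})(A_{i+1,j} + A_{i,j}) = \pm(2A_{i,j} \pm 1)$, which is exactly the case split in \eqref{eq:distance_Ai} after noting $|A_{i+1,j}| - |A_{i,j}| = \pm 1$. Here $A_{i,j}$ is bounded in absolute value by $|w|^3$ (the random point lies in $[0,|w|^3]^m$ and flows of subwords of $w$ have entries bounded by $|w|$), so $2|A_{i,j}|+1$ is an $O(\log|w|)$-bit number; computing it is a shift and an addition, and adding it to $d^2(A,A_i)$ is one more $O(\log|w|)$-bit addition by the cost model of Section \ref{se:Computational_Model}. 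Reading the old value $A_{i,j}$ out of the array $\overline{AA}_i$ is a single array access.

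Putting this together: locate $j$, read $A_{i,j}$, decide the sign from $\varepsilon_{i+1}$ and whether the traversal is along $e$ or $e^{-1}$, overwrite the $j$th entry, and adjust the scalar — a constant number of operations, each on $O(\log|w|)$-bit data, for total cost $O(\log|w|)$. The only point that needs a word of care is the bookkeeping that tells us, given the letter, which coordinate $j$ is affected and in which direction $A_{i,j}$ moves; but this is precisely the content of the edge-numbering function already built in Lemma \ref{le:numbering_edges}, so there is no genuine obstacle here — the lemma is really just an accounting of the incremental update, and I expect the write-up to be short.
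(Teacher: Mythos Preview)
Your proposal is correct and follows essentially the same approach as the paper: locate the single changed coordinate, update it by $\pm 1$, and adjust the scalar via \eqref{eq:distance_Ai} using a constant number of $O(\log|w|)$-bit operations. The only point the paper makes explicit that you leave implicit is the bound $d^2(A,A_i) \le |w|\,(|w|^3+|w|)^2$ ensuring the running scalar itself has bit-length $O(\log|w|)$; this follows immediately from your coordinate bounds, so there is no real gap.
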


\begin{proof}
First note that
    $$d^2(A,A_i) \le \sum_{i=1}^m (|w|^3+|w|)^2 \le (|w|^3+|w|)^2|w|.$$
Hence, $d^2(A,A_i)$ has bit-length $O(\log|w|)$.
Similarly, $\pm 2|A_{i,j}|+1$ has bit-length $O(\log|w|)$.
It requires $O(\log|w|)$ to compute $\pm 2|A_{i,j}|+1$.
Finally, it takes the same time to take the sum of $d^2(A,A_i)$ and $\pm 2|A_{i,j}|+1$.
\end{proof}

\begin{proposition}\label{pr:computing_randomized_distinguisher}
There exists a randomized quasi-linear algorithm which given
a word $w = x_{i_1}^{\varepsilon_1}\ldots x_{i_k}^{\varepsilon_k} \in F(X_r)$
and a distinguisher $\nu_{d-1}$ for $w$
produces a function $\nu_d$. The function $\nu_d$ is a distinguisher for $w$
with the probability at least $1-\tfrac{1}{|w|}$, where the probability
is taken over all uniform choices of the point $A$ in $[0,|w|^3]^m$.
\end{proposition}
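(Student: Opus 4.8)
The plan is to take the quasi-quadratic procedure from Proposition \ref{pr:computing_distinguisher} and replace its expensive step --- the construction, sorting, and lexicographic numbering of the $k+1$ full tuples $A_0,\ldots,A_k\in\MZ^m$, each of bit-length $\Theta(|w|\log|w|)$ --- by the construction, sorting, and numbering of the $k+1$ scalars $d^2(A,A_i)$, each of bit-length $O(\log|w|)$, where $A$ is a point chosen uniformly at random from $[0,|w|^3]^m$. The output $\nu_d$ is then defined by numbering the multiset $\{d^2(A,A_i)\}_{i=0}^k$ in increasing order and setting $\nu_d(i)$ to the rank of $d^2(A,A_i)$. The two things to establish are: (i) this can be done in quasi-linear time, and (ii) with probability at least $1-1/|w|$ the resulting $\nu_d$ is genuinely a distinguisher for $w$ in $S_{r,d}$.

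\textit{Running time.} First I would recall (as in the proof of Proposition \ref{pr:computing_distinguisher}, via Lemma \ref{le:numbering_edges}) that the edges of $\Gamma_{d-1}=\Gamma_{d-1,w}$ can be numbered $1,\ldots,m$ in quasi-linear time, so that the flows $\pi_{w_i}$ are represented as vectors $A_i\in\MZ^m$ with $A_{i+1}$ obtained from $A_i$ by a $\pm1$ change in a single coordinate. By Lemma \ref{le:initial_distance} the scalar $d^2(A,A_0)=\sum_{i=1}^m a_i^2$ is computed in $\tO(|w|)$ time; then, maintaining the displacement vector $\overline{AA}_i$ (only the one coordinate that changes needs updating at each step), Lemma \ref{le:distance_bound} gives the incremental update of $d^2(A,A_{i+1})$ from $d^2(A,A_i)$ in $O(\log|w|)$ time using formula \eqref{eq:distance_Ai}. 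Over all $k\le|w|$ steps this is $\tO(|w|)$ total. Finally, sorting $k+1$ integers each of bit-length $O(\log|w|)$ --- equivalently, inserting them into a tree of such bit-strings and reading off the order --- takes $\tO(|w|)$ time, and reading off the ranks to define $\nu_d$ costs the same. Sampling $A$ itself costs $O(m\log|w|)=\tO(|w|)$ random bits. So the algorithm is quasi-linear.

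\textit{Correctness with high probability.} The key structural fact is that, since the $A_i$ are obtained from $\Gamma_{d-1}$, we have $w_i=w_j$ in $S_{r,d}$ if and only if $A_i=A_j$ as vectors in $\MZ^m$ (this is exactly the equivalence ``$w_i$ and $w_j$ define the same flow on $\Gamma_{d-1,w}$'' used just before the definition of distinguisher, together with the fact that $\nu_{d-1}$ being a distinguisher makes $\Gamma_{d-1,w}$ the honest support graph). Consequently, if $A_i=A_j$ then certainly $d^2(A,A_i)=d^2(A,A_j)$, so the ``$\Leftarrow$'' direction of the distinguisher property $\nu_d(i)=\nu_d(j)\Leftrightarrow w_i=w_j$ holds deterministically. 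The only way $\nu_d$ can fail is a \emph{collision}: some pair with $A_i\ne A_j$ but $d^2(A,A_i)=d^2(A,A_j)$, i.e.\ $A$ lies on the perpendicular bisector hyperplane $\{x:\|x-A_i\|^2=\|x-A_j\|^2\}$. For a fixed such pair, this hyperplane has a normal direction $A_i-A_j\ne0$ with some nonzero coordinate; along the corresponding axis the condition pins $A$'s coordinate to at most one value in $\{0,1,\ldots,|w|^3\}$, so the probability that the uniform $A$ lands on it is at most $1/(|w|^3+1)$. There are fewer than $\binom{|w|+1}{2}\le |w|^2$ pairs, so a union bound gives failure probability at most $|w|^2/(|w|^3+1)<1/|w|$, as claimed. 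I would close by noting that if by chance $\nu_d$ is not a distinguisher, it is nevertheless a coarsening-respecting function in the sense that $w_i=w_j\Rightarrow\nu_d(i)=\nu_d(j)$ always --- this one-sided guarantee is what makes the eventual word-problem algorithm false-biased, but that observation belongs to the next section.

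\textit{Main obstacle.} The routine parts are the complexity bookkeeping (already packaged in Lemmas \ref{le:numbering_edges}, \ref{le:initial_distance}, \ref{le:distance_bound}). The only real point requiring care is the probabilistic collision bound: one must argue cleanly that distinct integer vectors $A_i\ne A_j$ in the box give a bisector hyperplane hitting at most a $1/(|w|^3+1)$ fraction of the lattice points of $[0,|w|^3]^m$ --- which follows by fixing a coordinate in which $A_i-A_j$ is nonzero and conditioning on the remaining coordinates --- and then that a union bound over the $O(|w|^2)$ pairs stays below $1/|w|$, which is exactly why the box has side $|w|^3$ rather than something smaller.
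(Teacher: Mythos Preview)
Your proposal is correct and follows essentially the same approach as the paper: compute the scalars $d^2(A,A_i)$ incrementally via Lemmas~\ref{le:initial_distance} and~\ref{le:distance_bound}, sort and rank them in quasi-linear time, and bound the failure probability by the union of the $O(|w|^2)$ perpendicular-bisector hyperplanes inside the cube $[0,|w|^3]^m$. Your probabilistic argument is in fact slightly more explicit than the paper's (you spell out the conditioning on a nonzero coordinate of $A_i-A_j$ to get the $1/(|w|^3+1)$ bound per hyperplane), and your closing remark about the one-sided guarantee is exactly what the paper formalizes separately as Proposition~\ref{pr:mistake}.
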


\begin{proof}
Using Lemmas \ref{le:initial_distance} and \ref{le:distance_bound}
we can compute the array $i\mapsto d^2(A,A_i)$ in $\tO(|w|)$ time.
The numbers $\{d(A,A_i)\}_{i=0}^{|w|}$ are then lexicographically sorted and numbered
$d^2(A,A_i) \mapsto n_i$. The function $\nu_d$ is the composition $i\mapsto n_i$.
Overall, it requires in $\tO(|w|)$ time to compute $\nu_d$.

The described algorithm makes a mistake
when $d^2(A,A_i) = d^2(A,A_j)$ while $A_i\ne A_j$ for some $i,j\in [0,|w|]$.
This happens only when the randomly chosen point $A$ belongs to a hyperplane
with a normal vector $\overline{A_iA_j}$ through $\tfrac{1}{2}(A_i+A_j)$.
The union of hyperplanes for all pairs of points $(A_i,A_j)$ contains at most $\tfrac{1}{|w|}$
part of the hypercube $[0,|w|^3]^m$. Hence, the uniformly chosen $A$ has
not greater than $\tfrac{1}{|w|}$ chance to collapse two distinct points $A_i,A_j$.
\end{proof}

The next proposition states that the support graph of the function produced by our algorithm
is a homomorphic image of the correct support graph $\Gamma_{d,w}$.

\begin{proposition}\label{pr:mistake}
Let $\nu_{d-1},\nu_d$ be distinguishers for $w$,  $\nu_d'$ be a
function produced from $\nu_{d-1}$ by the randomized
procedure described in Proposition \ref{pr:computing_randomized_distinguisher}, and
$\Delta = \Gamma_{\nu_d'}$ (see formulae (\ref{eq:nu_support})).
Then there exists an epimorphism $\varphi:\Gamma_d\rightarrow\Delta$
which is an isomorphism if and only if the algorithm does not make
a mistake.
\end{proposition}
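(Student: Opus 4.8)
The plan is to exhibit $\varphi$ as the map induced on vertices by $\nu_d(i)\mapsto\nu_d'(i)$ and then read off both the epimorphism and the stated equivalence from the combinatorics of the randomized procedure. First I would recall that the procedure of Proposition \ref{pr:computing_randomized_distinguisher} computes tuples $A_0,\dots,A_k\in\MZ^m$ encoding the flows $\pi_{w_0},\dots,\pi_{w_k}$ on $\Gamma_{d-1}$, draws a random point $A$, and defines $\nu_d'(i)$ to be the rank of $d^2(A,A_i)$ in the sorted list $\{d^2(A,A_j)\}_{j=0}^k$, with equal values getting equal ranks. Two facts will drive the argument: (i) $A_i=A_j$ exactly when $\pi_{w_i}\equiv\pi_{w_j}$ on $\Gamma_{d-1}$, and since $\nu_{d-1}$ is a genuine distinguisher $\Gamma_{d-1}$ is (isomorphic to) the support graph of $w$ in $\Cay(S_{r,d-1})$, so by the flow criterion for $S_{r,d}$ this is equivalent to $w_i=w_j$ in $S_{r,d}$, i.e.\ to $\nu_d(i)=\nu_d(j)$; and (ii) $\nu_d'(i)=\nu_d'(j)$ exactly when $d^2(A,A_i)=d^2(A,A_j)$.

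Then I would observe that $A_i=A_j$ trivially forces $d^2(A,A_i)=d^2(A,A_j)$, so (i) and (ii) yield $\nu_d(i)=\nu_d(j)\Rightarrow\nu_d'(i)=\nu_d'(j)$; hence $\nu_d(i)\mapsto\nu_d'(i)$ is a well-defined surjection from $V(\Gamma_{\nu_d})=\{\nu_d(0),\dots,\nu_d(k)\}$ onto $V(\Delta)=\{\nu_d'(0),\dots,\nu_d'(k)\}$. It carries the edge $\nu_d(j-1)\stackrel{x_{i_j}^{\varepsilon_j}}{\rightarrow}\nu_d(j)$ to $\nu_d'(j-1)\stackrel{x_{i_j}^{\varepsilon_j}}{\rightarrow}\nu_d'(j)$, which is an edge of $\Delta$ by definition (\ref{eq:nu_support}) of $\Gamma_{\nu_d'}$, so the map preserves labels and connectedness, sends the root $\nu_d(0)$ to the root $\nu_d'(0)$, and is onto on edges. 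Thus it is a rooted $X$-digraph epimorphism $\Gamma_{\nu_d}\to\Delta$, and precomposing with the canonical isomorphism $\Gamma_d\cong\Gamma_{\nu_d}$ produces $\varphi:\Gamma_d\to\Delta$.

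For the equivalence I would argue that a surjective morphism of these $X$-digraphs is an isomorphism precisely when it is injective on vertices: an edge is the triple of its origin, terminus and label, all of which $\varphi$ preserves, so vertex-injectivity automatically gives edge-injectivity, and $\varphi$ is already onto. And $\varphi$ is injective on vertices iff $\nu_d'(i)=\nu_d'(j)\Rightarrow\nu_d(i)=\nu_d(j)$, which by (i) and (ii) is exactly the condition $d^2(A,A_i)=d^2(A,A_j)\Rightarrow A_i=A_j$ for all $i,j\in[0,k]$, i.e.\ the event that the randomized procedure makes no mistake in the sense identified in the proof of Proposition \ref{pr:computing_randomized_distinguisher}. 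The only step requiring genuine care will be the bookkeeping behind fact (i): checking that the flow comparison is carried out on the graph of the previously computed distinguisher, which by induction equals the true support graph of $w$ in $\Cay(S_{r,d-1})$, so that the chain ``$A_i=A_j\Leftrightarrow\pi_{w_i}\equiv\pi_{w_j}\Leftrightarrow w_i=w_j$ in $S_{r,d}$'' is legitimate; granting that, the rest is a direct unwinding of the definitions.
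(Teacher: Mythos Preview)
Your proof is correct and follows essentially the same approach as the paper: the paper's argument is exactly the chain $\nu_d(i)=\nu_d(j)\Leftrightarrow A_i=A_j\Rightarrow\nu_d'(i)=\nu_d'(j)$, which yields the epimorphism, together with the observation that $\varphi$ is an isomorphism iff the implication becomes an equivalence, i.e.\ iff $\nu_d'$ is a correct distinguisher. Your write-up is simply a more detailed unpacking of these steps, including the justification of fact~(i) via the hypothesis that $\nu_{d-1}$ is a genuine distinguisher.
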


\begin{proof}
For any $i,j\in[0,k]$:
    $$\nu_d(i)=\nu_d(j) \ \ \Leftrightarrow\ \ A_i=A_j\ \ \Rightarrow\ \ \nu_d'(i)=\nu_d'(j).$$
Therefore, there exists a (unique) epimorphism $\varphi:\Gamma_d\rightarrow\Delta$.
Clearly, $\varphi$ is an isomorphism if and only if
$$\nu_d(i)=\nu_d(j) \ \ \Leftrightarrow\ \ \nu_d'(i)=\nu_d'(j),$$
i.e., when the algorithm outputs a correct distinguisher.
\end{proof}

\begin{theorem}\label{th:randomized_WP}
Let $r,d\in\MN$ and $w\in F(X_r)$.
There exists a quasi-linear randomized algorithm deciding if $w=1$ in $S_{r,d}$, or not.
Furthermore,
\begin{itemize}
    \item[(a)]
if $w=1$ in $S_{r,d}$, then the algorithm outputs $Yes$;
    \item[(b)]
if $w\ne 1$ in $S_{r,d}$, then the algorithm outputs $No$
with probability at least $\rb{1-\tfrac{1}{|w|}}^{\log_3 |w|}$.
\end{itemize}
\end{theorem}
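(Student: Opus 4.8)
The plan is to build the sequence of functions $\nu_0, \nu_1, \ldots, \nu_d$ by iterating the randomized procedure of Proposition~\ref{pr:computing_randomized_distinguisher}, and then to read off the answer from the graph $\Gamma_{\nu_d}$ attached to the final function. First I would invoke Proposition~\ref{pr:shortest identity}: if $d > \log_3 |w|$ then $w \neq 1$ in $S_{r,d}$ automatically, so the algorithm needs to compute at most $\log_3 |w|$ stages. Set $\nu_0 \equiv 0$ (valid since $S_{r,0}$ is trivial), and for $i = 1, \ldots, \min(d, \lceil\log_3|w|\rceil)$ apply the randomized procedure to $\nu_{i-1}$ to obtain $\nu_i$, each call costing $\tO(|w|)$ time by Proposition~\ref{pr:computing_randomized_distinguisher}; the total running time is $\log_3|w| \cdot \tO(|w|) = \tO(|w|)$, which gives the quasi-linear bound. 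The algorithm then outputs $Yes$ iff $\nu_d(0) = \nu_d(k)$ (equivalently, the graph $\Delta = \Gamma_{\nu_d}$ has its root equal to the $\omega$-endpoint of the trace of $w$, i.e.\ $w$ defines the trivial flow on $\Delta$).

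For correctness part (a), the key observation is monotonicity of the errors: at every stage, $\nu_{i-1}(a) = \nu_{i-1}(b)$ implies $\nu_i'(a) = \nu_i'(b)$ (this is exactly the implication $A_a = A_b \Rightarrow \nu_d'(a)=\nu_d'(b)$ used in the proof of Proposition~\ref{pr:mistake}), so inductively the graph produced at stage $i$ is always a homomorphic image of the true support graph $\Gamma_{i,w}$. If $w = 1$ in $S_{r,d}$, then $\pi_w \equiv 0$ on $\Gamma_{d-1,w}$ (the true graph), hence by Lemma~\ref{le:same_flows} $w$ also defines the trivial flow on any homomorphic image — in particular on the graph actually computed by the algorithm at stage $d-1$. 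Therefore the stage-$d$ function will satisfy $\nu_d(0) = \nu_d(k)$ regardless of which random points were chosen, so the algorithm answers $Yes$ with certainty. (One should phrase this carefully using the abstract-support-graph machinery of Section~\ref{se:graph_support}: the sequence of computed graphs forms a chain of quotients compatible with the $\iota$ maps via Proposition~\ref{pr:main_diagram}, and triviality of a flow propagates down such a chain.)

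For part (b), suppose $w \neq 1$ in $S_{r,d}$. The algorithm errs only if at some stage it collapses two genuinely distinct initial-segment classes, because by the monotonicity above the computed graph is always a quotient of the true one, and a quotient can only identify flows that were already equal — so if $w$ defines a nontrivial flow on the true graph $\Gamma_{d-1,w}$ but a trivial one on the computed graph, that can only be because the computed stage-$(d-1)$ graph is a \emph{proper} quotient, i.e.\ some earlier stage made a collapsing mistake. By Proposition~\ref{pr:computing_randomized_distinguisher} each of the (at most $\log_3|w|$) stages is mistake-free with probability at least $1 - \tfrac{1}{|w|}$, and since the random points are chosen independently across stages, the probability that \emph{all} stages are correct is at least $\bigl(1 - \tfrac{1}{|w|}\bigr)^{\log_3|w|}$; conditioned on that event, $\nu_d$ is a genuine distinguisher and $\nu_d(0) \neq \nu_d(k)$, so the algorithm correctly answers $No$.

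The main obstacle I expect is making the part-(a) argument airtight, namely tracking precisely how a ``mistake'' at stage $i$ (which replaces the true support graph by a proper quotient) interacts with the $\iota$ operation at stage $i+1$. One must check that even a wrongly-collapsed graph $\Delta_{i}$ still admits the construction $\iota(\Delta_i)$ and that the relevant commuting square (Proposition~\ref{pr:main_diagram}) goes through with $\Delta_i$ in place of the true $\Gamma_{i,w}$; the abstract support graphs of Section~\ref{se:graph_support} were introduced precisely to handle this, and the proof will consist of assembling Propositions~\ref{pr:main_diagram} and~\ref{pr:mistake} and Lemma~\ref{le:same_flows} into the statement that the whole chain of computed graphs lies in $\Omega_w$ and maps compatibly onto each other, so that triviality of the flow of $w$ is preserved going down the chain. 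Everything else — the time bound and the independence-of-stages probability estimate — is routine given the cited results.
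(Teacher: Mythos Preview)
Your proposal is correct and follows essentially the same approach as the paper's own proof: iterate the randomized step from $\nu_0\equiv 0$, bound the number of stages by $\log_3|w|$ via Proposition~\ref{pr:shortest identity}, use Propositions~\ref{pr:mistake} and~\ref{pr:main_diagram} together with Lemma~\ref{le:same_flows} to establish that a trivial $w$ always yields a trivial flow on the computed (possibly over-collapsed) graph, and multiply the per-stage success probabilities from Proposition~\ref{pr:computing_randomized_distinguisher} for part~(b). Your discussion of the ``main obstacle'' is exactly the point the abstract-support-graph machinery of Section~\ref{se:graph_support} is designed to address, and the paper handles it in precisely the way you anticipate.
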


\begin{proof}
Starting with $\nu_0\equiv 0$ we compute distinguishers $\nu_0',\ldots,\nu_d'$
using the randomized algorithm described in
the proof of Proposition \ref{pr:computing_randomized_distinguisher}.
Output $Yes$ if $\nu_d'(|w|)=\nu_d'(0)$. Otherwise, output $No$.
Since $d$ can be bounded by $\log_3|w|$ the described algorithm
has quasi-linear complexity in $|w|$.

Assume that $w=1$ in $S_{r,d}$.
Let $\nu_1',\ldots,\nu_d'$ be the sequence of functions inductively
produced by the randomized algorithm. By construction we have $\nu_0'=\nu_0 \equiv 0$.
Denote $\Gamma_{\nu_i'}$ by $\Delta_i$. It follows from
Propositions \ref{pr:mistake} and \ref{pr:main_diagram} that for every $i\in [0,k]$
there exists an epimorphism $\tau_i:\Gamma_{i} \rightarrow \Delta_i$.
Since $w$ is trivial in $S_{r,d}$, then it has the trivial flow in $\Gamma_{d-1}$.
Hence, by Lemma \ref{le:same_flows}, $w$ also has the trivial flow in $\Delta_{d-1}$.
Therefore, the algorithm outputs $Yes$.

We compute up to $\log_3|w|$ distinguishers.
By Proposition \ref{pr:computing_randomized_distinguisher}, the chance to make a mistake at
any stage is not greater than $\tfrac{1}{|w|}$.
Thus, the chance to make no mistakes is not smaller than $\rb{1-\tfrac{1}{|w|}}^{\log_3 |w|}$.
\end{proof}

Finally, we want to make several remarks on the performance of the algorithm.
The bound in Theorem \ref{th:randomized_WP}(b) can be simplified as follows:
$$
P(\mbox{success}) \ge
\rb{1-\tfrac{1}{|w|}}^{\log_3 |w|}
\ge
1-\tfrac{\log_3 |w|}{|w|}.
$$
Thus, the failure rate of the algorithm decreases almost linearly with $|w|$.

The success rate of the algorithm can be improved by sampling the point
$A$ from a bigger hypercube. For instance, taking numbers uniformly from $[0,|w|^4]$
improves the correctness estimate in Proposition \ref{pr:computing_randomized_distinguisher}
to $1-\tfrac{1}{|w|^2}$ and the correctness estimate in Theorem \ref{th:randomized_WP}
to $1-\tfrac{\log_3 |w|}{|w|^2}$. At the same time bit-lengths of $d^2(A,A_i)$ increase only by
a constant factor leaving the quasi-linear complexity bound intact.

The actual correctness probability is probably much better than our estimates.
Making a mistake on some intermediate step does not imply that the algorithm
will output $Yes$ on $w\ne 1$. In fact, it is possible to get correct
distinguisher $\nu_{i+1}$ starting from incorrect $\nu_{i}$.

\section{The power problem}

In this section we describe the algorithm for
solving the power problem in $S_{r,d}$.
The algorithm is based on two observations.
The first observation is:
    $$u=v^k \mbox{ in } S_{r,d} \ \ \Rightarrow \ \ u=v^k \mbox{ in }S_{r,d-1}.$$
The second observation is
the Malcev theorem on centralizers in free solvable groups.

\begin{theorem*}[\cite{Malcev:1960}]
The centralizer of any nontrivial element $u$ of a free solvable group is abelian. Furthermore:
\begin{itemize}
    \item[(a)]
If $u=1$ in $S_{r,d-1}$, then $[u,v]=1$ in $S_{r,d}$  if and only if $v=1$ in $S_{r,d-1}$.
    \item[(b)]
If $u\ne1$ in $S_{r,d-1}$,
then $[u,v]=1$ in $S_{r,d}$  if and only if $u$ and $v$ are powers of the same element in $S_{r,d}$.
\qed
\end{itemize}
\end{theorem*}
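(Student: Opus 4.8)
The plan is to establish all three assertions simultaneously by induction on the solvability class $d$, feeding everything through the Magnus embedding. The base case $d=1$ is trivial: $S_{r,1}=\MZ^r$ is abelian, every centralizer is the whole group, (a) reduces to ``true iff true'', and (b) is vacuous; we may also assume $r\geq2$, since $S_{1,d}=\MZ$ for all $d$. For the inductive step I fix $d\geq2$, set $\bar G=S_{r,d-1}$, and use $\phi:S_{r,d}\hookrightarrow W:=M(\bar G)\cong\MZ^r\rwr\bar G$, writing $W=T\rtimes\bar G$ with $T$ the free left $\MZ\bar G$-module of rank $r$, and $v\mapsto\bar v$ for the projection $S_{r,d}\to\bar G$. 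Two facts carry the argument: first, $\bar G$ is torsion-free, so its integral group ring $\MZ\bar G$ is a domain and $T$ is a torsion-free $\MZ\bar G$-module; second, a direct matrix computation shows that $(g_1,t_1)$ and $(g_2,t_2)$ commute in $W$ if and only if $[g_1,g_2]=1$ in $\bar G$ and $(1-g_2)t_1=(1-g_1)t_2$ in $T$.

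Write $\phi(u)=(g,t)$. If $u=1$ in $\bar G$ then $g=1$ and $t\neq0$, so for $\phi(v)=(h,s)$ the commuting condition becomes $(1-h)t=0$; torsion-freeness of $T$ forces $h=1$, hence $C_{S_{r,d}}(u)=\ker(S_{r,d}\to\bar G)$, which is abelian and consists exactly of the $v$ with $\bar v=1$. This is statement (a), with abelianness included. If instead $g\neq1$ in $\bar G$, I first show $C:=C_{S_{r,d}}(u)$ is abelian: for $v_1,v_2\in C$ their images lie in $C_{\bar G}(g)$, which is abelian by the inductive hypothesis, so $g,\bar v_1,\bar v_2$ pairwise commute. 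Writing $\phi(v_i)=(g_i,t_i)$, one computes $[\phi(v_1),\phi(v_2)]=\bigl(1,(g_1g_2)^{-1}[(1-g_2)t_1-(1-g_1)t_2]\bigr)$; multiplying the relations $(1-g_i)t=(1-g)t_i$ by $(1-g_{3-i})$, using that $g,g_1,g_2$ commute as elements of $\MZ\bar G$, and cancelling the nonzero factor $1-g$ gives $(1-g_2)t_1-(1-g_1)t_2=0$, so $[v_1,v_2]=1$.

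For (b), still with $g\neq1$ in $\bar G$: the projection $v\mapsto\bar v$ is injective on $C$, since $\bar v=1$ and $v\in C$ give $\phi(v)=(1,s)$ with $(1-g)s=0$, hence $s=0$ and $v=1$. Thus $C$ embeds into $C_{\bar G}(g)$. If $\bar u\neq1$ in $S_{r,d-2}$, the inductive hypothesis (b) says each element of $C_{\bar G}(g)$ and $\bar u$ are powers of a common element; since $\bar u\neq1$ and $C_{\bar G}(g)$ is torsion-free abelian, this forces $C_{\bar G}(g)$ to have rank $\leq1$, i.e.\ to be locally cyclic, hence so is the subgroup $C$, and in particular $u$ and $v$ are powers of a common element. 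In the remaining case $\bar u=1$ in $S_{r,d-2}$, the element $u$ lies in $F^{(d-2)}/F^{(d)}$; since $F^{(d)}=(F^{(d-2)})^{(2)}$ and $F^{(d-2)}$ is free, this is a free metabelian group $M$, any $v$ commuting with $u$ also lies in $M$ (by the inductive hypothesis (a) applied to $\bar u$), and $u\notin M'$, so (b) reduces to the special case of the theorem for free metabelian groups.

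I expect that free metabelian base case to be the one genuinely non-formal step: there one again uses the Magnus embedding $M\hookrightarrow\MZ^{(I)}\rwr M^{\mathrm{ab}}$, but now exploits that $\MZ[M^{\mathrm{ab}}]=\MZ[\MZ^{(I)}]$ is a unique factorization domain, so that a careful analysis of the irreducible factors of $1-\bar u$ and of the Fox derivatives $\partial u/\partial x_i$ forces the abelianization of any element commuting with $u$ to be proportional to that of $u$, yielding a cyclic centralizer. Everything above this is routine bookkeeping with the wreath-product structure and the domain property, so once the metabelian lemma is in hand the induction closes.
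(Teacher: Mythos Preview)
The paper does not prove this theorem at all: it is quoted from Mal'cev with a \qed\ and used as a black box, so there is no ``paper's own proof'' to compare against. Your write-up is therefore being measured only against correctness.

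Your inductive framework via the Magnus embedding is sound, and the commutator computation in $W=T\rtimes\bar G$, the proof that $C$ is abelian when $g\neq1$, the injectivity of $C\to C_{\bar G}(g)$, and the reduction in the sub-case $\bar u\neq1$ in $S_{r,d-2}$ to ``$C_{\bar G}(g)$ has rank~$\leq1$'' are all correct. Two points deserve tightening. First, ``$\bar G$ torsion-free $\Rightarrow$ $\MZ\bar G$ is a domain'' is not a valid implication in general (this is the Kaplansky zero-divisor problem); what you actually need and have is that $S_{r,d-1}$ is poly-(torsion-free abelian), hence orderable, hence $\MZ S_{r,d-1}$ has no zero divisors. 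Second, and more seriously, the entire content of the theorem is pushed into the free metabelian case, which you only sketch: for $d=2$ your sub-case split is vacuous (since $S_{r,0}=1$), so the induction genuinely begins there, and for $d\geq3$ you land in a free metabelian group of \emph{infinite} rank.

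Your metabelian sketch (``UFD $+$ irreducible factors of $1-\bar u$ and Fox derivatives force $\bar v$ proportional to $\bar u$'') is pointing in the right direction but is not yet a proof. Concretely, from $(1-\bar v)\,\partial u/\partial x_i=(1-\bar u)\,\partial v/\partial x_i$ and the fundamental relation $\sum_i(x_i-1)\,\partial u/\partial x_i=\bar u-1$ you can extract that a common factor $\alpha$ of all $\partial u/\partial x_i$ must divide $1-\bar u$, but turning this into the conclusion that $\langle\bar u,\bar v\rangle\subset\MZ^{(I)}$ has rank~$1$ requires a further argument (for instance, reducing to a primitive $\bar u$, where $1-\bar u$ is irreducible in $\MZ[\MZ^{(I)}]$, and then analysing divisibility of $1-\bar v$ by $1-\bar u$ via evaluation). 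Until that lemma is written out, the proof has a genuine gap precisely at its non-formal core.
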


The next algorithm solves the power problem in $S_{r,d}$.

\begin{algorithm}{Power problem in $S_{r,d}$}
\label{al:PP}
\begin{algorithmic}[1]
\REQUIRE $r,d\in\MN$ and $u,v\in F(X_r)$.
\ENSURE $k\in\MZ$ such that $u=v^k$ in $S_{r,d}$ and $Fail$ if $k$ does not exist.
\STATE Let $D = 1+\min\{d,\log_3(|u|+|v|)\}$.
\STATE Construct a sequence of support graphs $\{\Gamma_i\}_{i=0}^{D}$ for the set $\{u,v,[u,v]\}$.
\STATE Use $\{\Gamma_i\}_{i=0}^{D}$ to compute the largest $s$ such that $u=1$ in $S_{r,s}$.
\STATE Use $\{\Gamma_i\}_{i=0}^{D}$ to compute the largest $t$ such that $v=1$ in $S_{r,t}$.
\LINEIF{$d\le s,t$}{\algorithmicreturn\ $1$.}
\LINEIF{$s<d\le t$}{\algorithmicreturn\ $Fail$.}
\LINEIF{$t<d\le s$}{\algorithmicreturn\ $0$.}
\LINEIF{$s<t<d$}{\algorithmicreturn\ $Fail$.}
\LINEIF{$t<s<d$}{\algorithmicreturn\ $Fail$.}
\IF{$s=t<d$}
    \STATE Choose any edge $e$ in $\Gamma_s$ with $\pi_v(e) \ne 0$ in $\Gamma_s$ and put $k=\pi_u(e)/\pi_v(e)$.
    \LINEIF{$k\notin\MZ$ or $[u,v]\ne 1$ in $S_{r,d}$}{\algorithmicreturn\ $Fail$.}\label{st:PP_comm}
    \FORALL{$e$ in $\Gamma_s$}\label{st:PP_for1}
        \LINEIF{$\pi_u(e) \ne k \pi_v(e)$}{\algorithmicreturn\ $Fail$.}
    \ENDFOR\label{st:PP_for2}
    \RETURN $k$.
\ENDIF
\end{algorithmic}
\end{algorithm}

A few details are in order. By Lemma \ref{le:iota_power_T} the support graph
for $T = T(\{u,v,[u,v]\})$ in $S_{r,1+\log_3(|u|+|v|)}$ is $T$ itself
because the diameter of the graph $T$ is not greater than $3(|u|+|v|)$.
In particular, $s,t \le 1+\log_3(|u|+|v|)$.
That explains the choice of $D$.

Algorithm \ref{al:CP} can be implemented as a deterministic or a randomized algorithm
depending on how we compute the sequence of graphs $\{\Gamma_i\}_{i=0}^{D}$.
Computing $\{\Gamma_i\}_{i=0}^{D}$ using the deterministic algorithm from
Theorem \ref{th:deterministic_WP} gives the deterministic version of Algorithm \ref{al:PP}.

\begin{theorem}\label{th:deterministic_PP}
The deterministic Algorithm \ref{al:PP} solves the power problem in $S_{r,d}$
in quasi-quadratic time $\tO((|u|+|v|)^2)$.
\end{theorem}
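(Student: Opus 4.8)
The plan is to verify that every line of Algorithm~\ref{al:PP} is correct and that, with the deterministic word-problem solver plugged in, the overall running time is $\tO((|u|+|v|)^2)$. Write $n = |u|+|v|$ throughout. First I would justify correctness of the case analysis. Using the first observation $u=v^k$ in $S_{r,d}\Rightarrow u=v^k$ in $S_{r,d-1}$ (and its contrapositive), one checks: if $d\le s,t$ then $u=v=1$ in $S_{r,d}$ so $u=v^1$; if $s<d\le t$ then $v=1$ but $u\ne 1$ in $S_{r,d}$, so no power works; if $t<d\le s$ then $u=1$, $v\ne 1$ in $S_{r,d}$, so $k=0$; if $s<t<d$ or $t<s<d$ then in $S_{r,\min(s,t)+1}$ exactly one of $u,v$ is trivial while the other is not, forcing $Fail$ by the first observation. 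The remaining case $s=t<d$ is where Malcev's theorem enters: here $u,v$ are both nontrivial in $S_{r,s+1}\le S_{r,d}$, and $u=v^k$ forces $[u,v]=1$ in $S_{r,d}$; conversely if $[u,v]=1$ and $u\ne 1$ in $S_{r,d-1}$ (which holds since $s<d$, i.e. $s\le d-1$), Malcev(b) gives that $u,v$ are powers of a common element, hence $u=v^k$ for some rational $k$, and the candidate $k$ is correctly recovered from any edge where $\pi_v$ is nonzero since flows on $\Cay(S_{r,d-1})$ (equivalently, on the support graph $\Gamma_s$, $s\le d-1$) determine the element and are additive/scaled under taking powers.

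Next I would address why $\pi_u(e)/\pi_v(e)$ reads off the right exponent and why checking $\pi_u(e)=k\pi_v(e)$ on all edges of $\Gamma_s$ suffices: by the flow theorem (the $\pi$-theorem quoted in the excerpt), $u = v^k$ in $S_{r,s+1}$ iff $\pi_u \equiv \pi_{v^k} = k\,\pi_v$ as flows on $\Cay(S_{r,s})$; since $s+1\le d$, equality in $S_{r,d}$ additionally requires the explicit commutator test $[u,v]=1$ in $S_{r,d}$ performed in line~\ref{st:PP_comm}, which is decided by the already-constructed graphs $\{\Gamma_i\}_{i=0}^D$ applied to the word $[u,v]$. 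One should note $|[u,v]|\le 2n$, so the sets and diameters involved are all $O(n)$, and $D = 1+\min\{d,\log_3 n\}$ is justified by Lemma~\ref{le:iota_power_T} exactly as the paragraph after the algorithm explains: the diameter of $T(\{u,v,[u,v]\})$ is at most $3n$, so $\iota^{\log_3(3n)}$ already stabilizes at $T$, giving $s,t\le 1+\log_3 n$ and making it pointless to iterate past $D$.

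For the complexity bound: constructing the sequence $\{\Gamma_i\}_{i=0}^{D}$ amounts to computing distinguishers $\nu_0,\dots,\nu_D$ for the (constant-size) union of prefix trees of $u$, $v$, $[u,v]$, each step costing $\tO(n^2)$ by Proposition~\ref{pr:computing_distinguisher} (suitably applied to language support graphs as in Section~\ref{se:language_support}), and $D = O(\log n)$ such steps cost $\tO(n^2)$ in total. Steps 3 and 4 read off the largest $s,t$ with $u=1$ (resp. $v=1$) in $S_{r,s}$ by checking, for each $i\le D$, whether the trace of $u$ (resp. $v$) is a cycle with trivial flow in $\Gamma_i$ — $O(\log n)$ checks each of cost $\tO(n)$. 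The $s=t<d$ branch does one division, one commutator-triviality test (already cheap, the graphs are built), and a single pass over the $O(n)$ edges of $\Gamma_s$ comparing $O(\log n)$-bit numbers $\pi_u(e)$ and $k\pi_v(e)$, costing $\tO(n)$. So the dominant cost is the $\tO(n^2)$ graph construction, giving the claimed $\tO((|u|+|v|)^2)$. The main obstacle is not any single calculation but the bookkeeping of the case analysis in the light of the two observations: making sure each of the mutually exclusive \texttt{LINEIF} branches is both sound (returns only correct answers) and complete (the branches exhaust all possibilities for the pair $(s,t)$ relative to $d$), and that in the final branch the rational $k$ is validated everywhere rather than only on one edge — the single-edge value is merely a candidate that must survive both the commutator test and the global edge sweep.
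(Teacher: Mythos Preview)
Your approach matches the paper's, and the complexity analysis is fine. However, there is a genuine gap in the correctness argument for the branch $s=t<d$. You assert that $u\ne 1$ in $S_{r,d-1}$ ``holds since $s<d$, i.e.\ $s\le d-1$'', but this fails precisely when $s=d-1$: by definition of $s$ one then has $u=1$ in $S_{r,d-1}$, so the hypothesis of Malcev(b) is not met and your invocation of it is illegitimate. The paper closes this gap by splitting into two sub-cases. If $s=t=d-1$, then $\Gamma_s=\Gamma_{d-1}$ and the edge sweep $\pi_u\equiv k\pi_v$ on $\Gamma_{d-1}$ is, by the $\pi$-theorem, exactly the condition $u=v^k$ in $S_{r,d}$; the commutator test is vacuous here (Malcev(a) forces $[u,v]=1$ automatically). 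If $s=t<d-1$, then indeed $u\ne 1$ in $S_{r,d-1}$, Malcev(b) yields $u=w^a$, $v=w^b$ in $S_{r,d}$ for some $w$, and since $w\ne 1$ in the torsion-free group $S_{r,s+1}$ the ratio $\pi_u(e)/\pi_v(e)$ equals $a/b$ on any edge with $\pi_v(e)\ne 0$; thus $k\in\MZ$ together with $[u,v]=1$ already gives $u=v^k$ in $S_{r,d}$.

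A related slip: you equate ``flows on $\Cay(S_{r,d-1})$'' with ``the support graph $\Gamma_s$, $s\le d-1$''. When $s<d-1$ these are different objects; $\Gamma_s$ sits in $\Cay(S_{r,s})$ and flows there determine elements only of $S_{r,s+1}$, not of $S_{r,d}$. That is precisely why the commutator test is needed to lift from $S_{r,s+1}$ to $S_{r,d}$ in the second sub-case, and why the phrase ``$u=v^k$ for some rational $k$'' cannot substitute for the actual argument via Malcev(b) and torsion-freeness.
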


\begin{proof}
All cases considered in the algorithm are trivial except, maybe,
the case when $s=t<d$. In that case $u\ne 1$ and $v\ne 1$ in $S_{r,d}$.
The flows $\pi_u$ and $\pi_v$ are circulations in $\Cay(S_{r,s})$
(the source and the sink are the same) and $v=u^k$ implies that $\pi_v \equiv k\pi_u$.
Hence, $\pi_u(e)/\pi_v(e)$ is the only possible value for $k$ (if $\pi_v(e)\ne 0$).
Now, we have two cases as in the Malcev theorem.
If $s=t=d-1$, then it is sufficient to check if $\pi_v \equiv k\pi_u$
(done in lines \ref{st:PP_for1}--\ref{st:PP_for2}).
Otherwise, it is sufficient to check if $[u,v]=1$ (part (b) of the Malcev theorem)
which is done in line \ref{st:PP_comm}.
By Theorem \ref{th:deterministic_WP} it takes quasi-quadratic time
to construct support graphs for $T(\{u,v,[u,v]\})$ and test the equality $[u,v]=1$.
\end{proof}

Computing the sequence $\{\Gamma_i\}_{i=0}^{D}$
can also be done using the randomized algorithm from Theorem \ref{th:randomized_WP}.
To obtain the desired probability of success we choose the random tuple $A$
with components chosen uniformly from $[0,9(|u|+|v|)^3]$.

\begin{theorem}\label{th:randomized_PP}
The randomized Algorithm \ref{al:PP} solves the power problem
in $S_{r,d}$ in quasi-linear time $\tO(|u|+|v|)$.
The algorithm returns a correct answer with probability at least:
    $$\rb{1-\tfrac{1}{|u|+|v|}}^{1+\log_3 (|u|+|v|)}.$$
\end{theorem}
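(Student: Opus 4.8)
The plan is to analyze Algorithm \ref{al:PP} exactly as in the proof of Theorem \ref{th:deterministic_PP}, and then to track how the randomized construction of the support graphs affects correctness and running time. First I would establish the time complexity. The only change from the deterministic version is that the sequence $\{\Gamma_i\}_{i=0}^{D}$ is produced by the randomized procedure of Proposition \ref{pr:computing_randomized_distinguisher} applied to the prefix tree $T(\{u,v,[u,v]\})$ rather than by the deterministic procedure of Proposition \ref{pr:computing_distinguisher}. By Theorem \ref{th:randomized_WP} each distinguisher (equivalently, each support graph $\Gamma_i$) is computed in quasi-linear time $\tO(|u|+|v|)$, and since $D = 1+\min\{d,\log_3(|u|+|v|)\} \le 1+\log_3(|u|+|v|)$, the whole sequence costs $\tO(|u|+|v|)$. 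Lemma \ref{le:numbering_edges} gives the edge numbering in quasi-linear time, and all of the case analysis in lines 5--9, the single division $k=\pi_u(e)/\pi_v(e)$, the word-problem test $[u,v]\ne 1$, and the loop over the edges of $\Gamma_s$ (which has at most $3(|u|+|v|)$ edges) each run in at most $\tO(|u|+|v|)$ time. Hence the overall running time is $\tO(|u|+|v|)$.

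Next I would verify correctness, assuming no mistake is made in the randomized graph construction. In that case each $\Gamma_i$ is the genuine support graph of $T(\{u,v,[u,v]\})$ in $S_{r,i}$, so $s$ and $t$ are computed correctly, and the reasoning of the proof of Theorem \ref{th:deterministic_PP} applies verbatim: the easy cases are settled by the observation $u=v^k$ in $S_{r,d}$ implies $u=v^k$ in $S_{r,d-1}$ together with the Malcev theorem, and the case $s=t<d$ is handled by noting that $\pi_u,\pi_v$ are circulations in $\Cay(S_{r,s})$, that $u=v^k$ forces $\pi_u\equiv k\pi_v$ so the value $k=\pi_u(e)/\pi_v(e)$ is forced, and that the algorithm then either checks $\pi_u\equiv k\pi_v$ directly (when $s=t=d-1$) or checks $[u,v]=1$ via part (b) of the Malcev theorem (when $s=t<d-1$). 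So conditioned on the event that all $D$ invocations of the randomized subroutine succeed, the output is correct.

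Finally I would bound the error probability. The algorithm computes at most $1+\log_3(|u|+|v|)$ distinguishers. By Proposition \ref{pr:computing_randomized_distinguisher}, with the tuple $A$ drawn uniformly from $[0,9(|u|+|v|)^3]^m$ where $m=|E(T(\{u,v,[u,v]\}))|$ is bounded by $3(|u|+|v|)$, the probability of a mistake at a single stage is at most $\tfrac{1}{3(|u|+|v|)} \le \tfrac{1}{|u|+|v|}$ (the slightly larger hypercube compensates for the extra edges coming from the three words instead of one). Since the stages are independent, the probability that no mistake occurs at any stage is at least $\rb{1-\tfrac{1}{|u|+|v|}}^{1+\log_3(|u|+|v|)}$, which by the union-type bound gives the stated estimate. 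I expect the main subtlety, rather than a deep obstacle, to be bookkeeping: one must be a little careful that $\pi_u$ and $\pi_v$ in line 11 refer to flows on the \emph{possibly collapsed} graph $\Gamma_s$ produced by the randomized routine, so a mistake could in principle make the division succeed spuriously; but this only ever causes a $Fail$ or an incorrect $k$ when the underlying graph is wrong, which is precisely the event being bounded, and the final loop over all edges of $\Gamma_s$ together with the $[u,v]=1$ check guarantees that no incorrect positive answer escapes when the graphs \emph{are} correct. This is why the algorithm is unbiased: errors occur only on the low-probability event of a graph-construction mistake, in either direction.
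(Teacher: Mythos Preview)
Your proposal is correct and follows essentially the same approach as the paper: invoke Theorem~\ref{th:randomized_WP} for the quasi-linear complexity of building the $D\le 1+\log_3(|u|+|v|)$ support graphs, defer correctness of the case analysis to the proof of Theorem~\ref{th:deterministic_PP}, and bound the error by counting at most $9(|u|+|v|)^2$ bad hyperplanes against the hypercube $[0,9(|u|+|v|)^3]^m$ to get a per-iteration failure probability of at most $\tfrac{1}{|u|+|v|}$. Your intermediate figure $\tfrac{1}{3(|u|+|v|)}$ is not quite what the hyperplane count actually gives, but since you only need $\tfrac{1}{|u|+|v|}$ this is harmless.
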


\begin{proof}
The complexity estimate immediately follows from Theorem \ref{th:randomized_WP}.
We argue as in Proposition \ref{pr:computing_randomized_distinguisher} to get the correctness lower-bound.
The graph $\Gamma(\{u,v,[u,v]\})$ has at most $3(|u|+|v|)$ vertices
which defines at most $9(|u|+|v|)^2$ bad hyperplanes.
The union of those hyperplanes can contain at most $\tfrac{1}{|u|+|v|}$ part of our hypercube.
Hence, our procedure produces the correct support graph $\Gamma_i$ for $\Gamma(\{u,v,[u,v]\})$ in $S_{r,i}$
with probability at least $1-\tfrac{1}{|u|+|v|}$.
We perform up to $1+\log_3(|u|+|v|)$ iterations.
Hence the claimed correctness probability.
\end{proof}

Algorithm \ref{al:PP} is unbiased, i.e., it can make an error on both positive and negative
instances of the problem.

\section{The conjugacy problem}

In this section we revisit algorithmic difficulty of the conjugacy problem in free solvable groups.
In \cite{Matthews:1966} Matthews  proved that the conjugacy problem (CP) is solvable in wreath products
$A \rwr B$ (under some natural assumptions on $A$ and $B$). She used that result to prove that
CP in free metabelian groups is decidable.
Kargapolov and Remeslennikov generalized that result to free solvable groups
in \cite{Kargapolov-Rem:1966}.
A few years later Remeslennikov and Sokolov in \cite{Remeslennikov-Sokolov:1970} described precisely the
image of $F/[N,N]$ under the Magnus embedding and showed that two elements are conjugate in $S_{r,d}$
if and only if their images are conjugate in $M(F/N)$.
Recently Vassileva in \cite{Vassilieva:2011} found a polynomial time $O(rd(|u|+|v|)^8)$
algorithm for the conjugacy problem combining the Matthews and Remeslennikov-Sokolov results.

\subsection{Matthews algorithm for wreath products}

In this section we shortly outline computations in the proof of the Matthews theorem on conjugacy
in wreath products. Note that we use different notation for wreath products
than Matthews and at the end we obtain slightly different formula.

Let $A,B$ be finitely generated groups.
By $A^B$ we denote the set of all functions $f:B\rightarrow A$ with finite support.
For $f\in A^B$ and $\alpha\in B$ define $f^\alpha \in A^B$ as follows:
$$
f^\alpha(x) = f(\alpha^{-1}x).
$$
The {\em restricted wreath product} of $A$ and $B$, denoted by $A \rwr B$, is a set of pairs:
    $$\{(f,\alpha) \mid f\in A^B,\ \alpha\in B\}$$
with multiplication given by:
$$
(f,\alpha)\cdot(g,\beta) = (f g^\alpha,\alpha\beta).
$$
Hence, for $x=(f,\alpha)$, $y=(g,\beta)$, and $z=(h,\gamma)$ in $A \rwr B$  we have:
$$
zx=yz
\ \ \Leftrightarrow\ \
\left\{
\begin{array}{ll}
\gamma \alpha=\beta \gamma & \mbox{in } B,\\
f^\gamma = h^{-1} g h^\beta & \mbox{in } A^B.
\end{array}
\right.
$$
Assuming the equality above we observe that for any $\delta \in B$ and $j\in\MZ$:
\begin{align*}
f(\gamma^{-1} \beta^j \delta) &= f^\gamma(\beta^j \delta) \\
&= h^{-1}(\beta^j \delta) g(\beta^j \delta) h^\beta(\beta^j \delta) \\
&= h^{-1}(\beta^j \delta) g(\beta^j \delta) h(\beta^{j-1} \delta)
\end{align*}
and, hence, the following formula holds for any $m\in\MZ$ and $n\in\MN$:
\begin{equation}\label{eq:long_cong_prod}
  \prod_{j=m}^{m-n} f^\gamma(\beta^j \delta) =
  h^{-1}(\beta^m \delta) \cdot \rb{ \prod_{j=m}^{m-n}  g(\beta^j \delta)} \cdot h(\beta^{m-n-1} \delta).
\end{equation}
Now, for $\beta,\gamma,\delta \in B$ and $f:B \rightarrow A$
define $\rho^{(f,\beta,\gamma)}(\delta) \in A$ as follows:
$$
\rho^{(f,\beta,\gamma)}(\delta) =
\begin{cases}
\prod_{j=k-1}^{0} f(\gamma^{-1} \beta^j \delta) & \mbox{if } \ord(\beta)=k,\\
\prod_{j=\infty}^{-\infty} f(\gamma^{-1} \beta^j \delta) & \mbox{if } \ord(\beta)=\infty.\\
\end{cases}
$$
Hence, assuming that $zx=yx$ and using the defined above notation,
equality (\ref{eq:long_cong_prod}) gives:
\begin{itemize}
    \item
if $\ord(\beta) < \infty$, then $\rho^{(f,\beta,\gamma)}(\delta)$ is conjugate
to $\rho^{(g,\beta,1)}(\delta)$ in $A$ for every $\delta\in B$;
    \item
if $\ord(\beta) = \infty$, then $\rho^{(f,\beta,\gamma)}(\delta) = \rho^{(g,\beta,1)}(\delta)$
in $A$ for every $\delta\in B$.
\end{itemize}
Matthews proved in \cite{Matthews:1966} that the converse also holds.

\begin{theorem}[\cite{Matthews:1966}]\label{th:Matthews}
Let $A$ and $B$ be finitely generated groups.
Elements $x=(f,\alpha)$ and $y=(g,\beta)$ are conjugate in $A\rwr B$ if and only if there exists
$\gamma \in B$ satisfying:
\begin{itemize}
    \item[(a)]
$\gamma \alpha=\beta \gamma$ in $B$;
    \item[(b)]
if $\ord(\beta) < \infty$, then $\rho^{(f,\beta,\gamma)}(\delta)$ is conjugate
to $\rho^{(g,\beta,1)}(\delta)$ in $A$ for every $\delta\in B$;
    \item[(c)]
if $\ord(\beta) = \infty$, then $\rho^{(f,\beta,\gamma)}(\delta) = \rho^{(g,\beta,1)}(\delta)$
in $A$ for every $\delta\in B$.
\qed
\end{itemize}
\end{theorem}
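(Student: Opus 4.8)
The plan is to prove the two directions separately, reusing the computation already carried out in the discussion preceding the statement. The forward direction is essentially done: if $z=(h,\gamma)$ conjugates $x=(f,\alpha)$ to $y=(g,\beta)$, i.e. $zx=yz$, then unpacking the multiplication rule $(f,\alpha)\cdot(g,\beta)=(fg^\alpha,\alpha\beta)$ gives exactly the system displayed above, namely $\gamma\alpha=\beta\gamma$ in $B$ together with $f^\gamma=h^{-1}gh^\beta$ in $A^B$. Condition (a) is the first equation verbatim. For (b) and (c) one iterates the pointwise identity $f^\gamma(\beta^j\delta)=h^{-1}(\beta^j\delta)\,g(\beta^j\delta)\,h(\beta^{j-1}\delta)$ to obtain the telescoping product formula (\ref{eq:long_cong_prod}); then one takes the product over a full period $j=k-1,\ldots,0$ when $\ord(\beta)=k<\infty$ (so that $\beta^{k-1-1}\delta$ and $\beta^{k-1}\delta$ differ by applying $\beta$ the right number of times and the two boundary $h$-terms become $h^{-1}(\delta)$ and $h(\beta^{-1}\delta)$, exhibiting $\rho^{(f,\beta,\gamma)}(\delta)$ as $h(\cdots)^{-1}\rho^{(g,\beta,1)}(\delta)\,h(\cdots)$), and over all of $\mathbb Z$ when $\ord(\beta)=\infty$ (where finiteness of support of $h$ forces the two boundary terms to be trivial, giving equality rather than conjugacy).

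For the converse I would reverse this reasoning, which is where the real work lies. Suppose $\gamma\in B$ is given with properties (a)--(c); I must produce $h\in A^B$ with $f^\gamma=h^{-1}gh^\beta$. The idea is to define $h$ on each $\langle\beta\rangle$-coset $\langle\beta\rangle\delta$ (fixing a coset representative $\delta$) by a partial-product recursion that forces the defining equation to hold: declare $h(\delta)$ to be a conjugating element witnessing (b) (in the finite-order case) or set $h(\delta)=1$ (in the infinite-order case), and then propagate via $h(\beta^{j-1}\delta)=g^{-1}(\beta^j\delta)h(\beta^j\delta)f^\gamma(\beta^j\delta)$ for decreasing $j$. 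One then checks: (i) in the infinite-order case this recursion is consistent and $h$ has finite support — consistency is automatic since there is no periodicity constraint, and finiteness of support follows because $f$ and $g$ have finite support so the recursion is eventually stationary in both directions, with condition (c) guaranteeing the two "ends" match at trivial value; (ii) in the finite-order case, after going once around the period one returns to a conjugate of the starting value, and the freedom in choosing the conjugator in (b) can be used so that the recursion closes up consistently around the cycle. By construction $h$ then satisfies $f^\gamma=h^{-1}gh^\beta$ pointwise, and together with (a) this says precisely that $z=(h,\gamma)$ satisfies $zx=yz$, i.e. $x$ and $y$ are conjugate.

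The main obstacle is the consistency check in the finite-order case in step (ii): one must verify that after traversing a full period of length $k$ the recursively defined value of $h$ at the representative returns not merely to something conjugate to its initial value but to the initial value itself, which requires choosing the witness in condition (b) compatibly around the cycle. Concretely, the product over one period produces $\rho^{(f,\beta,\gamma)}(\delta)=h(\delta)^{-1}\rho^{(g,\beta,1)}(\delta)h(\delta)$ exactly when $h(\delta)$ is taken to be the conjugating element supplied by (b), so the cycle closes by that very choice — the point to be careful about is that this single choice of $h(\delta)$ simultaneously validates every step of the recursion around the coset, which one confirms by re-deriving (\ref{eq:long_cong_prod}) for partial products starting from the recursively built $h$. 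I do not expect finiteness-of-support issues beyond the routine observation that $h$ is trivial on all but finitely many cosets (only those meeting the supports of $f$ or $g$), and on each such coset the recursion stabilizes at a trivial or fixed value outside a finite window.
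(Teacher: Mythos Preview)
Your forward direction is exactly the computation the paper carries out in the paragraphs preceding the theorem statement: the equations $\gamma\alpha=\beta\gamma$ and $f^\gamma=h^{-1}gh^\beta$, the telescoping identity~(\ref{eq:long_cong_prod}), and the two cases depending on $\ord(\beta)$. There is nothing to add there.

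For the converse, note that the paper itself does \emph{not} give a proof: it states the theorem with a terminal \qed\ and attributes the converse to Matthews~\cite{Matthews:1966}. Your construction of $h$ coset-by-coset via the recursion $h(\beta^{j-1}\delta)=g^{-1}(\beta^j\delta)\,h(\beta^j\delta)\,f^\gamma(\beta^j\delta)$ is the standard argument and is correct. One small clarification on your consistency check in the finite-order case: the recursion is a \emph{definition}, so every individual step holds automatically; the only genuine constraint is that after $k$ steps the value returns to where it started. Telescoping shows this closing-up condition is precisely $c^{-1}\rho^{(g,\beta,1)}(\delta)\,c=\rho^{(f,\beta,\gamma)}(\delta)$ where $c$ is the initial value, so choosing $c$ to be the conjugator supplied by~(b) settles it in one stroke --- there is no further verification ``at each step'' needed. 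Your finite-support remarks are fine; on cosets missing the supports of $f$ and $g$ both $\rho$'s are trivial, so one takes $c=1$ there and $h$ vanishes on the whole coset.
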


Recall that the Magnus embedding embeds $S_{r,d}$ into $\MZ^r \rwr S_{r,d-1}$.
The Matthews theorem gives a solution to the conjugacy problem in $\MZ^r \rwr S_{r,d-1}$
and Remeslennikov-Sokolov prove that elements are conjugate in $S_{r,d}$
if and only if their images are conjugate in $\MZ^r \rwr S_{r,d-1}$.
This solves the conjugacy in $S_{r,d}$ and concludes the general algorithm description.

\subsection{Geometric approach to conjugacy problem in free solvable groups}

In the case of a free solvable group the Matthews result can be formulated
in a geometric way using flows on Schreier graphs. By
$\Sch_d(y)$ we denote the Schreier graph of $\gp{y}$ in $S_{r,d}$.
The next lemma follows from the definition of $\rho$.

\begin{lemma}\label{le:rho_pi}
Let $\phi: S_{r,d} \rightarrow \MZ^r \rwr S_{r,d-1}$ be the Magnus embedding.
Let $x,y,z \in F(X_r)$ and $\phi(x) = (f,\alpha)$, $\phi(y) = (g,\beta)$, $\phi(z) = (h,\gamma)$.
Then for any $\delta \in S_{r,d-1}$:
\begin{itemize}
    \item[(a)]
$\rho^{(g,\beta,1)}(\delta) \in \MZ^r$ is exactly the value of $\pi_{y}$
in $\Sch_{d-1}(y)$ restricted to the edges $\delta\rightarrow \delta x_i$ for $i=1,\ldots,r$;
    \item[(b)]
$\rho^{(f,\beta,\gamma)}(\delta) \in \MZ^r$ is exactly the value of $\pi_{\gamma x \gamma^{-1}}$
in $\Sch_{d-1}(y)$ restricted to edges $\delta\rightarrow \delta x_i$ for $i=1,\ldots,r$.
\qed
\end{itemize}
\end{lemma}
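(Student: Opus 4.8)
The plan is to unwind the definitions on both sides and check that they describe the same integer vectors edge-by-edge. Recall that $\rho^{(g,\beta,1)}(\delta)\in\MZ^r$ is, by definition, the product $\prod_j g(\beta^j\delta)$ taken over the coset $\gp{\beta}\delta$ (finite product around the cycle if $\ord(\beta)=k$, bi-infinite product if $\ord(\beta)=\infty$); since $\MZ^r$ is abelian this is just the sum $\sum_j g(\beta^j\delta)$. So for part (a) I would first compute, from the Magnus embedding formula, what the function $g$ coordinate of $\phi(y)$ actually is: writing $y=x_{i_1}^{\varepsilon_1}\ldots x_{i_k}^{\varepsilon_k}$, the matrix product defining $\phi(y)$ accumulates the $T$-component $\sum_\ell (\text{prefix})^\gamma\cdot t_{i_\ell}\cdot(\pm1)$, so that the $x_i$-coordinate of $g$ at a group element $\delta'\in S_{r,d-1}$ counts (with sign) how many times the trace of $y$ in $\Cay(S_{r,d-1})$ traverses the edge $\delta'\to\delta' x_i$. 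That is precisely the Fox-derivative/flow interpretation already recorded in Section~\ref{se:free_solvable_gps}: $g(\delta')_i = \pi_y^{(\Cay(S_{r,d-1}))}(\delta'\xrightarrow{x_i}\delta' x_i)$.

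Next I would pass from the Cayley graph $\Cay(S_{r,d-1})$ to the Schreier graph $\Sch_{d-1}(y)$. The Schreier graph of $\gp{y}$ is the quotient of $\Cay(S_{r,d-1})$ by the left action of $\gp{\beta}$, where $\beta$ is the image of $y$ in $S_{r,d-1}$; the vertices of $\Sch_{d-1}(y)$ are exactly the cosets $\gp{\beta}\delta$. Under this quotient, the flow $\pi_y$ on $\Cay(S_{r,d-1})$ pushes forward to a flow on $\Sch_{d-1}(y)$ whose value on a given edge $\gp{\beta}\delta\xrightarrow{x_i}\gp{\beta}\delta x_i$ is the sum of $\pi_y$ over all lifts, i.e.\ $\sum_j \pi_y(\beta^j\delta\xrightarrow{x_i}\beta^j\delta x_i) = \sum_j g(\beta^j\delta)_i = \rho^{(g,\beta,1)}(\delta)_i$. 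This is exactly the asserted identity in (a); the finite-versus-infinite-order dichotomy in the definition of $\rho$ matches the fact that the fiber over a vertex of $\Sch_{d-1}(y)$ is finite of size $\ord(\beta)$ or a full $\MZ$-orbit.

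For part (b) the argument is identical once one observes that $\rho^{(f,\beta,\gamma)}(\delta)=\prod_j f(\gamma^{-1}\beta^j\delta)$ is, by the change of variable $\delta'=\gamma^{-1}\delta$ inside the sum, nothing but $\rho^{(f^\gamma,\beta,1)}$ evaluated appropriately — and $f^\gamma$ is the function-coordinate of $(h,\gamma)^{-1}(f,\alpha)(h,\gamma)$-type conjugate; concretely, the relation between $f$, $\gamma$, and the flow of the conjugate word $\gamma x\gamma^{-1}$ is that the $T$-component of $\phi(\gamma x\gamma^{-1})$ is $f^\gamma$ shifted exactly as in the shift appearing in $\rho$. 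Thus $\rho^{(f,\beta,\gamma)}(\delta)_i$ equals the pushforward of $\pi_{\gamma x\gamma^{-1}}$ to the edge over $\delta\xrightarrow{x_i}\delta x_i$ in $\Sch_{d-1}(y)$ — note it is $\Sch_{d-1}(y)$, not $\Sch_{d-1}(x)$, because the relevant cosets and the conjugating element are governed by $\beta=\bar y$, which is why the conjugate $\gamma x\gamma^{-1}$ appears.

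The main obstacle I anticipate is purely bookkeeping: keeping straight the direction of the shift in $f^\gamma$ versus $\rho$, the sign conventions in the Fox-derivative formula \eqref{eq:derivative}, and the precise indexing of the product in $\rho$ (it runs from $k-1$ down to $0$, and in the conjugate case the shift by $\gamma^{-1}$ must be absorbed correctly). Since $A=\MZ^r$ is abelian all these products collapse to sums and no ordering issues survive, so once the dictionary ``$T$-coordinate of a Magnus matrix $=$ flow on $\Cay(S_{r,d-1})$'' and ``summing over a $\gp{\beta}$-orbit $=$ pushforward to $\Sch_{d-1}(y)$'' is set up cleanly, both parts follow by inspection. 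I would present it as: (1) identify $g$ (resp.\ the shifted $f^\gamma$) with the Cayley-graph flow of $y$ (resp.\ $\gamma x\gamma^{-1}$) via the Magnus embedding and Fox calculus; (2) identify the $\gp{\beta}$-orbit sum defining $\rho$ with the covering-map pushforward of that flow to $\Sch_{d-1}(y)$; (3) read off (a) and (b).
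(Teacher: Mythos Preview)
The paper gives no proof here; it simply declares the lemma to follow from the definition of $\rho$ and ends with a \qed. Your plan --- read off the function component of the Magnus image as the Cayley-graph flow (via the Fox derivative), then recognise the $\gp{\beta}$-orbit sum in $\rho$ as the pushforward of that flow along the covering $\Cay(S_{r,d-1})\to\Sch_{d-1}(y)$ --- is precisely that unwinding, and part~(a) goes through exactly as you describe.

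One point in~(b) is not quite right as you have written it: the $T$-component of $\phi(zxz^{-1})$ is $h\,f^{\gamma}\,(h^{-1})^{\gamma\alpha\gamma^{-1}}$, not $f^{\gamma}$ alone. Equivalently, in the Schreier graph the $z$- and $z^{-1}$-legs of the path for $zxz^{-1}$ contribute to the flow and do not cancel automatically; the extra $h$-terms vanish under the $\gp{\beta}$-orbit sum exactly when $\gamma\alpha\gamma^{-1}\in\gp{\beta}$. This hypothesis is in force at the lemma's only point of use (Theorem~\ref{th:geometric_conjugacy}, where one has $\gamma\alpha=\beta\gamma$), so nothing downstream is affected, and arguably the looseness is already present in the paper's own statement. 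Still, when you write it up you should either impose $\gamma\alpha\gamma^{-1}\in\gp{\beta}$ explicitly or else interpret $\pi_{\gamma x\gamma^{-1}}$ as the $\gamma$-shift of $\pi_x$ pushed to $\Sch_{d-1}(y)$, rather than as the flow of the word $zxz^{-1}$.
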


\begin{lemma}\label{le:trivial_pi_Sch}
For any $y\in F(X_r)$ we have $\pi_y\equiv 0$ in $\Sch_{d-1}(y)$ if and only if $y=1$ in $S_{r,d}$.
\end{lemma}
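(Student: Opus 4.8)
The plan is to reduce this to the already-established theorem characterizing triviality via flows on Cayley graphs, namely $u=v$ in $F/[N,N]$ iff $\pi_u=\pi_v$ in $\Cay(F/N)$, applied with $N=F^{(d-1)}$ so that $F/N=S_{r,d-1}$ and $F/[N,N]=S_{r,d}$. Taking $v=\varepsilon$ this says: $y=1$ in $S_{r,d}$ iff $\pi_y\equiv 0$ in $\Cay(S_{r,d-1})$. So the whole content of the lemma is to pass between the Cayley graph $\Cay(S_{r,d-1})$ and the Schreier graph $\Sch_{d-1}(y)=\Sch_{S_{r,d-1}}(\gp{y})$, which is the quotient of $\Cay(S_{r,d-1})$ by the left-multiplication action of the cyclic subgroup $\gp{y}$ (equivalently, the image of $\Cay(S_{r,d-1})$ under the canonical covering map $g\mapsto \gp{y}g$).

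First I would record the covering/quotient map $\varphi:\Cay(S_{r,d-1})\to\Sch_{d-1}(y)$ as a rooted $X$-digraph morphism sending the identity coset to the root $\gp{y}$, and observe that the trace of $y$ starting at the root is a \emph{closed} path in $\Sch_{d-1}(y)$ (since $\gp{y}\cdot y=\gp{y}$), so $\pi_y$ is genuinely a circulation there; likewise $\pi_y$ is a circulation in $\Cay(S_{r,d-1})$ because $y$ traces a closed path at the identity precisely when $y\in F^{(d-1)}$, but in any case the trace of $y$ from the identity ends at the vertex $y\cdot N$, and $\varphi$ identifies this with the root. The key algebraic fact, exactly the pushforward formula already used in Lemma \ref{le:same_flows}, is that for any edge $e$ of $\Sch_{d-1}(y)$,
$$
\pi_y^{(\Sch_{d-1}(y))}(e)=\sum_{\varphi(f)=e}\pi_y^{(\Cay(S_{r,d-1}))}(f).
$$
This immediately gives the easy direction: if $\pi_y\equiv 0$ in $\Cay(S_{r,d-1})$ then $\pi_y\equiv 0$ in $\Sch_{d-1}(y)$, hence (by the equivalence above) $y=1$ in $S_{r,d}$ — wait, that is backwards; let me state it correctly. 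The easy direction is: if $y=1$ in $S_{r,d}$ then $\pi_y\equiv 0$ in $\Cay(S_{r,d-1})$ by the flow theorem, and then the pushforward formula forces $\pi_y\equiv 0$ in $\Sch_{d-1}(y)$.

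The substantive direction is the converse, and this is where the main obstacle lies: from $\pi_y\equiv 0$ in $\Sch_{d-1}(y)$ I must recover $\pi_y\equiv 0$ in $\Cay(S_{r,d-1})$, i.e.\ I need the summands in the pushforward formula to vanish individually, not just in aggregate. The point is that the fibers of $\varphi$ over a fixed edge $e$ are the $\gp{y}$-orbit $\{y^n\!\cdot f_0\}_{n\in\MZ}$ of a single edge $f_0$, and $\gp{y}$ acts on $\Cay(S_{r,d-1})$ by \emph{label-preserving} graph automorphisms; since $\pi_y$ is the flow of the specific word $y$, one checks that $\pi_y$ is $\gp{y}$-invariant — concretely, $\pi_{y}$ evaluated on $y^n\!\cdot f_0$ counts signed traversals of that edge by the path $p_y$ from $1$, and conjugation-invariance of $\pi_y$ under the deck transformation by $y$ (which comes from $\pi_{y}=\pi_{y^{-1}\cdot y\cdot y}$ combined with the way traces shift under left translation by elements of $\gp{y}$) shows all fiber values are equal. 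Hence the fiber sum being $0$ forces each term to be $0$. I would write this invariance step carefully, as it is the only place the cyclic (rather than arbitrary) subgroup is used and the only non-formal ingredient. Once $\pi_y\equiv 0$ in $\Cay(S_{r,d-1})$, the flow theorem applied with $N=F^{(d-1)}$ yields $y=1$ in $S_{r,d}$, completing the equivalence.
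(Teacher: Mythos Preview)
Your easy direction ($y=1$ in $S_{r,d}$ $\Rightarrow$ $\pi_y\equiv 0$ in $\Sch_{d-1}(y)$) is fine, though the paper does it even more simply: if $y=1$ in $S_{r,d}$ then $y=1$ in $S_{r,d-1}$, so $\gp{y}$ is trivial in $S_{r,d-1}$ and $\Sch_{d-1}(y)=\Cay(S_{r,d-1})$ on the nose; no pushforward formula is needed.

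The substantive direction, however, has a genuine gap. Your key claim --- that $\pi_y$ on $\Cay(S_{r,d-1})$ is invariant under the left $\gp{y}$-action, so that all terms in a fiber sum are equal --- is \emph{false}. Take $r=2$, $d=2$, $y=x_1x_2x_1^{-1}$. In $S_{2,1}=\MZ^2$ we have $y=x_2$, so left multiplication by $y$ translates by $(0,1)$. The flow $\pi_y$ on $\Cay(\MZ^2)$ is supported on the three edges of the path $(0,0)\!\to\!(1,0)\!\to\!(1,1)\!\to\!(0,1)$. If $f_0$ is the $x_2$-edge $(1,0)\!\to\!(1,1)$ then $\pi_y(f_0)=1$, while $y\cdot f_0$ is the $x_2$-edge $(1,1)\!\to\!(1,2)$ and $\pi_y(y\cdot f_0)=0$. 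The identity $\pi_y=\pi_{y^{-1}yy}$ you invoke is a tautology and does not yield the invariance you need; unwinding the translation formula for $\pi_{gwg^{-1}}$ with $g=w=y$ only gives back $0=0$. So the fiber-sum argument does not force the individual Cayley-graph values to vanish, and your reduction to the flow theorem on $\Cay(S_{r,d-1})$ breaks down.

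The paper handles this direction quite differently: it observes (Lemma~\ref{le:rho_pi}) that $\pi_y$ on $\Sch_{d-1}(y)$ encodes exactly the Matthews invariants $\rho^{(g,\beta,1)}(\delta)$, so $\pi_y\equiv 0$ gives $\rho\equiv 0$, and then Theorem~\ref{th:Matthews} yields $y\sim 1$, i.e.\ $y=1$ in $S_{r,d}$. If you want a route closer in spirit to yours, one can argue via Fox calculus: $\pi_y\equiv 0$ in $\Sch_{d-1}(y)$ is equivalent to $(\partial y/\partial x_i)^\gamma\in(y-1)\,\MZ S_{r,d-1}$ for every $i$; combined with the fundamental identity $\sum_i(\partial y/\partial x_i)^\gamma(x_i-1)=y-1$ and the fact that $y-1$ is a non-zero-divisor in $\MZ S_{r,d-1}$ when $y\ne 1$ there, an augmentation count forces $y=1$ in $S_{r,d-1}$, reducing to the trivial case. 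Either way, the invariance shortcut does not work.
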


\begin{proof}
The equality $\pi_y \equiv 0$, by Lemma \ref{le:rho_pi}, implies $\rho \equiv 0$.
Hence, by Theorem \ref{th:Matthews}, $y\sim 1$ in $S_{r,d}$, i.e., $y=1$ in $S_{r,d}$.

Conversely, if $y=1$ in $S_{r,d}$, then $y=1$ in $S_{r,d-1}$. Hence, $\Sch_{d-1}(y) = \Cay(S_{r,d-1})$
and $\pi_y \equiv 0$ in $\Cay(S_{r,d-1})$.
Hence, sufficiency holds.
\end{proof}

The next theorem states that $x\sim y$ in $S_{r,d}$ if and only if
there exists a shift of $x$ in $\Sch_{d-1}(y)$ defining the same flow
as $y$.

\begin{theorem}\label{th:geometric_conjugacy}
Words $x,y \in F(X_r)$ represent conjugate elements in $S_{r,d}$
if and only if there exists $z\in F(X)$
satisfying $\pi_{zxz^{-1}} \equiv \pi_{y}$ in $\Sch_{d-1}(y)$.
The element $z$ can be viewed as an element of $S_{r,d-1}$.
\end{theorem}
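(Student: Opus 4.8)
The plan is to derive this statement directly from the Matthews criterion (Theorem \ref{th:Matthews}) applied to the Magnus embedding $\phi:S_{r,d}\hookrightarrow\MZ^r\rwr S_{r,d-1}$, translating each of the three conditions (a), (b), (c) of that theorem into the language of flows via Lemma \ref{le:rho_pi}. Write $\phi(x)=(f,\alpha)$, $\phi(y)=(g,\beta)$, and for a candidate conjugator $z\in F(X_r)$ write $\phi(z)=(h,\gamma)$. By the Remeslennikov--Sokolov result, $x\sim y$ in $S_{r,d}$ if and only if $\phi(x)\sim\phi(y)$ in $\MZ^r\rwr S_{r,d-1}$, so it suffices to unwind the Matthews conditions. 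Since $\MZ^r$ is abelian, conjugacy in the base group $A=\MZ^r$ is equality, so conditions (b) and (c) both collapse to the single requirement $\rho^{(f,\beta,\gamma)}(\delta)=\rho^{(g,\beta,1)}(\delta)$ for all $\delta\in S_{r,d-1}$. By Lemma \ref{le:rho_pi} the left-hand side is the restriction of $\pi_{zxz^{-1}}$ to the $r$ edges leaving $\delta$ in $\Sch_{d-1}(y)$, and the right-hand side is the corresponding restriction of $\pi_y$; as $\delta$ ranges over all vertices these restrictions exhaust all edges of $\Sch_{d-1}(y)$, so (b)/(c) together say exactly $\pi_{zxz^{-1}}\equiv\pi_y$ in $\Sch_{d-1}(y)$.

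Next I would handle condition (a), $\gamma\alpha=\beta\gamma$ in $B=S_{r,d-1}$, which is where the word ``$zxz^{-1}$'' rather than a decoupled pair really comes in. The point is that once we insist on the single flow equality $\pi_{zxz^{-1}}\equiv\pi_y$, condition (a) becomes automatic: taking the terminal vertex (the sink) of both flows, $\pi_{zxz^{-1}}$ has sink $\gamma\alpha\gamma^{-1}$ viewed as a vertex of $\Sch_{d-1}(y)$, i.e.\ the coset $\gp{y}\gamma\alpha\gamma^{-1}$, while $\pi_y$ has sink $\gp{y}\beta=\gp{y}$; equality of flows forces equality of sinks, giving $\gamma\alpha\gamma^{-1}\in\gp{\beta}$, which after absorbing the appropriate power of $y$ into $z$ (replacing $z$ by $y^{-n}z$ changes $zxz^{-1}$ only inside $\gp{y}$ at the level of $S_{r,d-1}$, hence does not change the flow on $\Sch_{d-1}(y)$) yields $\gamma\alpha=\beta\gamma$. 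Conversely, any $z$ satisfying the Matthews conditions produces, via Lemma \ref{le:rho_pi}, a $z$ with $\pi_{zxz^{-1}}\equiv\pi_y$. This establishes the equivalence. Finally, the remark that $z$ may be taken in $S_{r,d-1}$: the flow $\pi_{zxz^{-1}}$ on $\Sch_{d-1}(y)$ depends only on the image of $z$ in $S_{r,d-1}$ (the trace of a word in $\Sch_{d-1}(y)$ factors through $S_{r,d-1}$), so we lose nothing by regarding $z$ as an element of $S_{r,d-1}$, and a preimage in $F(X_r)$ can always be chosen.

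I expect the main obstacle to be the bookkeeping in condition (a): being careful about \emph{which} vertex of $\Sch_{d-1}(y)$ is the sink of $\pi_{zxz^{-1}}$ and reconciling the ``conjugate in $B$'' condition $\gamma\alpha=\beta\gamma$ (an equation in $S_{r,d-1}$ about $\beta$-translates, reflecting that $\rho$ is read along the $\beta$-orbit) with the purely geometric statement about coincidence of flows. One has to verify that the periodicity built into the definition of $\rho^{(f,\beta,\gamma)}$ along powers of $\beta$ matches the way a closed-up path $zxz^{-1}$ wraps around the $y$-axis in $\Sch_{d-1}(y)$ --- this is essentially the content of Lemma \ref{le:rho_pi}, but one must confirm the ranges of $\delta$ genuinely sweep out every edge exactly once and that the finite- versus infinite-order cases for $\beta=\bar y$ are both subsumed (the infinite-order case is the generic one; when $\bar y$ has finite order in $S_{r,d-1}$, i.e.\ $y\in F^{(d-1)}$, the Schreier graph $\Sch_{d-1}(y)$ is a finite cycle and the product in $\rho$ is genuinely finite, but the flow picture is unchanged). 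Once Lemma \ref{le:rho_pi} is in hand and the abelian collapse of (b)/(c) is noted, the rest is a short translation.
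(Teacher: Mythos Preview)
Your overall plan---reducing to Matthews' criterion via Lemma~\ref{le:rho_pi}, noting that the abelian base $\MZ^r$ collapses conditions (b) and (c) to equality of the $\rho$'s, and recovering (a) from sink-matching---tracks the paper's sufficiency argument closely. But your treatment of condition (a) contains a genuine gap. From equality of sinks you correctly deduce $\gamma\alpha\gamma^{-1}\in\gp{\beta}$, say $\gamma\alpha\gamma^{-1}=\beta^k$. Your proposed fix is to replace $z$ by $y^{-n}z$; compute what this actually does: the new $\gamma'$ is $\beta^{-n}\gamma$, and
\[
\gamma'\alpha(\gamma')^{-1}=\beta^{-n}\,\gamma\alpha\gamma^{-1}\,\beta^{n}=\beta^{-n}\beta^{k}\beta^{n}=\beta^{k}.
\]
The exponent $k$ is untouched---conjugating a power of $\beta$ by a power of $\beta$ does nothing. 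So absorbing powers of $y$ into $z$ cannot convert $\beta^k$ into $\beta$, and Matthews' condition (a) remains unestablished.

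What the paper does instead (tersely, with a ``clearly'') is argue that $k=1$ outright. One way to justify this: if $\beta=1$ in $S_{r,d-1}$ then (a) is vacuous, so assume $\beta\ne 1$ and let $s<d-1$ be maximal with $y=1$ in $S_{r,s}$. Then $\Sch_s(y)=\Cay(S_{r,s})$, and pushing the flow equality $\pi_{zxz^{-1}}\equiv\pi_y$ down to this Cayley graph gives, by the flow characterisation of equality in $F/[N,N]$, that $zxz^{-1}=y$ in $S_{r,s+1}$. Projecting $\gamma\alpha\gamma^{-1}=\beta^k$ from $S_{r,d-1}$ to $S_{r,s+1}$ then forces $\beta=\beta^k$ there; since $\beta\ne 1$ in $S_{r,s+1}$ and free solvable groups are torsion-free, $k=1$. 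For the necessity direction the paper is also more direct than your route through Remeslennikov--Sokolov: if $zxz^{-1}=y$ holds in $S_{r,d}$ then $\pi_{zxz^{-1}}\equiv\pi_y$ already in $\Cay(S_{r,d-1})$, hence in the quotient $\Sch_{d-1}(y)$. (A minor aside: your remark that $\Sch_{d-1}(y)$ is ``a finite cycle'' when $\bar y$ has finite order is off---since $S_{r,d-1}$ is torsion-free the only such case is $\bar y=1$, where the Schreier graph is the full Cayley graph.)
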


\begin{proof}
If $x$ and $y$ represent conjugate elements in $S_{r,d}$, then for some
$z$ we have $zxz^{-1}=y$ in $S_{r,d}$. Hence, $\pi_{zxz^{-1}} \equiv \pi_y$
in $\Cay(S_{r,d-1})$ and in $\Sch_{d-1}(y)$. Thus, necessity holds.

Conversely, assume that there exists $z\in F(X)$
satisfying $\pi_{zxz^{-1}} \equiv \pi_{y}$ in $\Sch_{d-1}(y)$.
If $\pi_y \equiv 0$ in $\Sch_{d-1}(y)$, then, by Lemma \ref{le:trivial_pi_Sch},
$y=1$ in $S_{r,d}$ and $\Sch_{d-1}(y) = \Cay(S_{r,d-1})$. Hence,
$\pi_{zxz^{-1}}\equiv 0$ in $\Cay(S_{r,d-1})$ and $z=1$ in $S_{r,d}$ as well.

Hence, we may assume that $\pi_y\not\equiv 0$ in $\Sch_{d-1}(y)$.
Now the equality $\pi_{zxz^{-1}} \equiv \pi_{y}$ in $\Sch_{d-1}(y)$ implies that
$zxz^{-1}$ is a cycle in $\Sch_{d-1}(y)$, i.e., $zxz^{-1}$ is a power of $y$.
Furthermore, since $\pi_y\not\equiv 0$, we clearly have $zxz^{-1} = y$ in $S_{r,d-1}$.
Thus, item (a) of Theorem \ref{th:Matthews} holds.

Let $\phi(x) = (f,\alpha)$, $\phi(y) = (g,\beta)$, $\phi(z) = (h,\gamma)$.
By Lemma \ref{le:rho_pi} the equality  $\pi_{zxz^{-1}} \equiv \pi_{y}$ implies that
$\rho^{(f,\beta,\gamma)}(\delta) = \rho^{(g,\beta,1)}(\delta)$ for every $\delta\in S_{r,d-1}$.
Hence, both items (b) and (c) of Theorem \ref{th:Matthews} hold.
\end{proof}

Now it is straightforward to solve the conjugacy problem in $S_{r,d}$.

\begin{algorithm}{Conjugacy problem in $S_{r,d}$}
\label{al:CP}
\begin{algorithmic}[1]
\REQUIRE $r,d\in\MN$ and $x,y\in F(X_r)$.
\ENSURE $Yes$ if $u\sim v$, $No$ otherwise.
\LINEIF{$x=1$ and $y=1$}{\algorithmicreturn\ $Yes$.}
\LINEIF{$x=1$ or $y=1$}{\algorithmicreturn\ $No$.}
\STATE Using Algorithm \ref{al:PP} construct the support graph for $y$ in $\Sch_{d-1}(y)$ and the flow $\pi_y$.
\STATE Choose a prefix $y_i$ satisfying $\pi_y(y_i\rightarrow y_{i+1}) \ne 0$.
\FORALL{$x'$ such that $x=x'\circ x''$}\label{st:CP_for1}
    \STATE Put $\gamma = y_i x'^{-1}$.
    \STATE Using Algorithm \ref{al:PP} compute the flow $\pi_{\gamma x\gamma^{-1}}$ in $\Sch_{d-1}(y)$.
    \LINEIF{$\pi_{\gamma x\gamma^{-1}} \equiv \pi_{y}$}{\algorithmicreturn\ $Yes$.}
\ENDFOR\label{st:CP_for2}
\RETURN $No$.
\end{algorithmic}
\end{algorithm}

A few details are in order. To construct support graphs for $y$ and $\gamma x\gamma^{-1}$
in $\Sch_{d-1}(y)$ one needs to find all prefixes of $y$ and $\gamma x\gamma^{-1}$
define the same $\gp{y}$-cosets. The later problem reduces to the membership problem
for $\gp{y}$ and can be treated by Algorithm \ref{al:PP}. It follows from the choice of
$\gamma$'s that the inputs to Algorithm \ref{al:PP} have lengths bounded by $|x|+|y|$.

\begin{theorem}\label{th:deterministic_CP}
There exists a quasi-quintic time $\tO((|x|+|y|)^5)$ deterministic algorithm
solving the conjugacy problem in $S_{r,d}$.
\end{theorem}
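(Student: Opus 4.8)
The plan is to read off correctness of Algorithm \ref{al:CP} from Theorem \ref{th:geometric_conjugacy} once I observe that it suffices to look for the conjugator among the $|x|+1$ explicit candidates $\gamma=y_ix'^{-1}$ ($x'$ a prefix of $x$), and then to bound the running time by counting the calls the algorithm makes to the power-problem routine of Theorem \ref{th:deterministic_PP}. Soundness is immediate: whenever the algorithm answers $Yes$ inside the loop it has exhibited a $\gamma$ with $\pi_{\gamma x\gamma^{-1}}\equiv\pi_y$ in $\Sch_{d-1}(y)$, so $x\sim y$ in $S_{r,d}$ by Theorem \ref{th:geometric_conjugacy}; and the cases $x=1$, $y=1$ in $S_{r,d}$ (tested in quasi-quadratic time via Theorem \ref{th:deterministic_WP}) are disposed of in the first two lines.

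For completeness I would argue as follows. Assume $x\ne 1\ne y$ in $S_{r,d}$ and $x\sim y$ in $S_{r,d}$. By Theorem \ref{th:geometric_conjugacy} there is $z$ with $\pi_{zxz^{-1}}\equiv\pi_y$ in $\Sch_{d-1}(y)$, and by Lemma \ref{le:trivial_pi_Sch} this flow is not identically $0$, so a prefix edge $y_i\to y_{i+1}$ of $y$ carrying nonzero flow exists, exactly as chosen by the algorithm. As in the proof of Theorem \ref{th:geometric_conjugacy} one gets $zxz^{-1}=y$ in $S_{r,d-1}$; tracing the word $zxz^{-1}$ from the root of $\Sch_{d-1}(y)$, the fact that $zxz^{-1}=y$ lies in $\gp{y}$ makes the $x$-portion of the trace a closed path at the vertex $\gp{y}z$, and the $z$- and $z^{-1}$-portions then run over the same edges in opposite directions and cancel; hence the flow of that closed path is $\equiv\pi_y$. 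Since $\pi_y$ is nonzero on $y_i\to y_{i+1}$, the path visits the vertex $\gp{y}y_i$, say after reading the prefix $x'$ of $x$, so $\gp{y}z=\gp{y}y_ix'^{-1}$ in $S_{r,d-1}$. Setting $\gamma=y_ix'^{-1}$ and writing $\gamma=uz$ with $u$ a power of $y$, one gets $\gamma x\gamma^{-1}=uzxz^{-1}u^{-1}=uyu^{-1}=y$ in $S_{r,d-1}$, and the same cancellation shows $\pi_{\gamma x\gamma^{-1}}$ equals the flow of $x$ traced from $\gp{y}\gamma=\gp{y}z$, which is $\pi_y$. So the loop succeeds at this $\gamma$; and if it succeeds at no $\gamma$ the output $No$ is correct.

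For the running time, the dominant cost is the loop of lines \ref{st:CP_for1}--\ref{st:CP_for2}, which runs over the $|x|+1=O(|x|+|y|)$ prefixes $x'$ of $x$. On each pass the only real work is to build the support graph of $w=\gamma x\gamma^{-1}$ in $\Sch_{d-1}(y)$ and the flow $\pi_w$ (and once, analogously, for $y$ itself); each such graph is determined by deciding, for every pair of prefixes $w_a,w_b$ of $w$, whether $\gp{y}w_a=\gp{y}w_b$, i.e. whether $w_b^{-1}w_a$ is a power of $y$ in $S_{r,d-1}$ -- a single invocation of Algorithm \ref{al:PP} for class $d-1$, which by Theorem \ref{th:deterministic_PP} costs $\tO((|x|+|y|)^2)$ since $|\gamma|\le|x|+|y|$ keeps $|w|=O(|x|+|y|)$ uniformly. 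There are $O((|x|+|y|)^2)$ such pairs, giving $\tO((|x|+|y|)^4)$ per flow, hence $\tO((|x|+|y|)^5)$ over the whole loop; comparing $\pi_{\gamma x\gamma^{-1}}$ with $\pi_y$ afterwards is cheap, and the preliminary word-problem tests cost only $\tO((|x|+|y|)^2)$. The step I expect to be the main obstacle is the completeness argument of the second paragraph -- justifying that one need not range over arbitrary conjugators but only over the $\gamma=y_ix'^{-1}$ -- together with the bookkeeping that keeps every auxiliary word $\gamma x\gamma^{-1}$ of length $O(|x|+|y|)$ so that the quasi-quadratic power-problem bound really applies on all $O(|x|+|y|)$ iterations.
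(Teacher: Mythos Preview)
Your proposal is correct and follows essentially the same approach as the paper: correctness is derived from Theorem~\ref{th:geometric_conjugacy}, and the $\tO((|x|+|y|)^5)$ bound comes from the $|x|+1$ loop iterations, each making $O((|x|+|y|)^2)$ calls to the deterministic quasi-quadratic power-problem routine. Your completeness argument (showing that the finitely many candidates $\gamma=y_ix'^{-1}$ suffice because any genuine conjugator $z$ has $\gp{y}z=\gp{y}\gamma$ for one of them, and the flow depends only on this coset) is spelled out in more detail than the paper's one-line ``correctness follows from Theorem~\ref{th:geometric_conjugacy}'', but the idea is the same.
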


\begin{proof}
Correctness of the algorithm follows from Theorem \ref{th:geometric_conjugacy}.
The loop \ref{st:CP_for1}--\ref{st:CP_for2} performs $|x|+1$ iterations.
At each iteration we compute the flow $\pi_{\gamma x\gamma^{-1}}$
which requires $O(|x|^2)$ runs of Algorithm \ref{al:PP}
and test if $\pi_{\gamma x\gamma^{-1}} \equiv \pi_{y}$ which requires
$O((|x|+|y|)^2)$ more runs of Algorithm \ref{al:PP}.
The deterministic Algorithm \ref{al:PP} has quasi-quadratic time complexity.
Thus, the total time complexity is $\tO((|x|+|y|)^5)$.
\end{proof}

We can further improve efficiency if we use the randomized version of Algorithm \ref{al:PP}.
Algorithm \ref{al:CP} invokes Algorithm \ref{al:PP} at most $(|x|+|y|)^2$ times on each iteration,
hence the total is number of invocations is bounded by $(|x|+|y|)^3$.
Each invocation of Algorithm \ref{al:PP} can produce an incorrect answer.
To better control the error we go deep into details of Algorithm \ref{al:PP} again.
As we mentioned above the lengths of inputs $(u,v)$ for Algorithm \ref{al:PP}
are bounded by $|x|+|y|$. Hence, for $T=T(\{u,v,[u,v]\})$ we have:
$$
\diam(T) \le |u|+|v|+|[u,v]| \le 5(|x|+|y|).
$$
Therefore, every time randomized Algorithm \ref{al:PP} is invoked it performs at most
$\log_3 (5(|x|+|y|)) \le 2+\log_3 (|x|+|y|)$ iterations.
The number of vertices in $T$ is also bounded by $5(|x|+|y|)$.
Hence, the total number of bad hyperplanes is not grater than $25(|x|+|y|)^2$.
Therefore, choosing a random tuple $A$ with elements in $[0,25(|x|+|y|)^6]$ produces
the correct result on a single iteration with probability not less than
$$
1-\tfrac{25(|x|+|y|)^2}{25(|x|+|y|)^6} \le 1-\tfrac{1}{(|x|+|y|)^4}.
$$
Hence, we get the correct result on a single invocation of an algorithm \ref{al:PP} with probability at least:
$$
\rb{1-\tfrac{1}{(|x|+|y|)^4}}^{2+\log_3 (|x|+|y|)}.
$$
Performing $(|x|+|y|)^3$ invocations of Algorithm \ref{al:PP} results in all correct results with probability
at least
$$
\rb{\rb{1-\tfrac{1}{(|x|+|y|)^4}}^{2+\log_3 (|x|+|y|)}}^{((|x|+|y|)^3)} \ge \rb{1-\tfrac{1}{|x|+|y|}}^{2+\log_3 (|x|+|y|)}.
$$
This proves the following theorem.

\begin{theorem}\label{th:randomized_CP}
There exists a quasi-quadric time $\tO((|x|+|y|)^4)$ unbiased randomized algorithm
solving the conjugacy problem in $S_{r,d}$. The probability of a correct computation
is at least $\rb{1-\tfrac{1}{|x|+|y|}}^{2+\log_3 (|x|+|y|)}$.
\qed
\end{theorem}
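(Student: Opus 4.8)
The plan is to run Algorithm~\ref{al:CP} exactly as in the deterministic case and merely substitute the randomized power-problem routine of Theorem~\ref{th:randomized_PP} for the deterministic one at each of its internal calls; the claim then splits into (i) counting those calls to read off the running time and (ii) propagating the per-call failure probability to a global one. Correctness, conditioned on every internal call answering correctly, is exactly Theorem~\ref{th:geometric_conjugacy}: $x\sim y$ in $S_{r,d}$ iff some cyclic conjugate $\gamma x\gamma^{-1}$ of $x$ satisfies $\pi_{\gamma x\gamma^{-1}}\equiv\pi_y$ in $\Sch_{d-1}(y)$, and the loop over prefixes $x'$ of $x$ runs through precisely the candidate conjugators $\gamma=y_ix'^{-1}$ modulo the cyclic symmetry fixing $\pi_y$. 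I would note at once that the resulting algorithm is \emph{unbiased}: an erroneous power-problem answer can either hide a genuine match (a false ``No'') or manufacture a spurious one (a false ``Yes'').

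For the time bound, the loop of lines~\ref{st:CP_for1}--\ref{st:CP_for2} makes $|x|+1$ passes; on each pass, assembling the support graph of $\gamma x\gamma^{-1}$ in $\Sch_{d-1}(y)$ asks one $\gp{y}$-coset-membership question (hence one call to Algorithm~\ref{al:PP}) per ordered pair of prefixes, i.e.\ $O((|x|+|y|)^2)$ calls, and checking $\pi_{\gamma x\gamma^{-1}}\equiv\pi_y$ costs a further $O((|x|+|y|)^2)$ calls. So Algorithm~\ref{al:PP} is invoked $O((|x|+|y|)^3)$ times in all, each time on a word pair of total length $\le|x|+|y|$ (which is what the shape of the $\gamma$'s guarantees), and each such call runs in $\tO(|x|+|y|)$ by Theorem~\ref{th:randomized_PP}; the product is $\tO((|x|+|y|)^4)$.

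The error estimate is the step that needs the most care, and it amounts to calibrating the sampling cube. Fix one invocation of Algorithm~\ref{al:PP} on a pair $(u,v)$ with $|u|+|v|\le|x|+|y|$; its prefix tree $T=T(\{u,v,[u,v]\})$ has $\diam(T)\le|u|+|v|+|[u,v]|\le 5(|x|+|y|)$, so by Lemma~\ref{le:iota_power_T} the routine lifts distinguishers at most $\log_3(5(|x|+|y|))\le 2+\log_3(|x|+|y|)$ times, while $T$ has at most $5(|x|+|y|)$ vertices and hence at most $25(|x|+|y|)^2$ bisecting ``bad'' hyperplanes. Drawing the random tuple $A$ uniformly from $[0,25(|x|+|y|)^6]$ then makes a single lifting step fail with probability $\le 25(|x|+|y|)^2/\big(25(|x|+|y|)^6\big)=(|x|+|y|)^{-4}$, so one full invocation is correct with probability $\ge\bigl(1-(|x|+|y|)^{-4}\bigr)^{2+\log_3(|x|+|y|)}$. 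Since the $\le(|x|+|y|)^3$ invocations use independent random samples, multiplying the per-call success probabilities and then applying Bernoulli's inequality $\bigl(1-(|x|+|y|)^{-4}\bigr)^{(|x|+|y|)^3}\ge 1-(|x|+|y|)^{-1}$ to the inner factor gives
\[
\rb{\rb{1-\tfrac{1}{(|x|+|y|)^4}}^{2+\log_3 (|x|+|y|)}}^{(|x|+|y|)^3}\ \ge\ \rb{1-\tfrac{1}{|x|+|y|}}^{2+\log_3 (|x|+|y|)},
\]
which is the stated probability. The only genuine obstacle is choosing the cube side $25(|x|+|y|)^6$ large enough that, after cubically many calls each carrying quadratically many bad hyperplanes, the surviving failure mass stays $O((|x|+|y|)^{-1})$; with that constant pinned down, the remainder is bookkeeping inherited from Theorems~\ref{th:randomized_WP} and~\ref{th:randomized_PP}.
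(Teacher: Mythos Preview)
Your proposal is correct and follows essentially the same approach as the paper: run Algorithm~\ref{al:CP} with the randomized Algorithm~\ref{al:PP}, bound the total number of calls by $(|x|+|y|)^3$ on inputs of size at most $|x|+|y|$, enlarge the sampling cube to $[0,25(|x|+|y|)^6]$ so that each lifting step fails with probability at most $(|x|+|y|)^{-4}$, and then multiply out the success probabilities using Bernoulli's inequality. The only difference is cosmetic---you name Bernoulli's inequality explicitly where the paper applies it silently.
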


\providecommand{\bysame}{\leavevmode\hbox to3em{\hrulefill}\thinspace}
\providecommand{\MR}{\relax\ifhmode\unskip\space\fi MR }
\providecommand{\MRhref}[2]{%
  \href{http://www.ams.org/mathscinet-getitem?mr=#1}{#2}
}
\providecommand{\href}[2]{#2}


\begin{thebibliography}{10}

\bibitem{Agrawal-Kayal-Saxena:2004}
M.~{Agrawal}, N.~{Kayal}, and N.~{Saxena}, \emph{{PRIMES is in P}}, Ann. of
  Math. {160} (2004), pp.~781--–793.

\bibitem{Babai:1979}
L.~{Babai}, \emph{{Monte-Carlo algorithms in graph isomorphism testing}},
  Universit\'e de Montr\'eal, D.M.S. No. 79--10, 1979.

\bibitem{Fox_calc4}
K.~T. {Chen}, R.~H. {Fox}, and R.~C. {Lyndon}, \emph{Free differential calculus
  IV}, Ann. of Math. {71} (1960), pp.~408--422.

\bibitem{DLS}
C.~{Droms}, J.~{Lewin}, and H.~{Servatius}, \emph{{The length of elements in
  free solvable groups}}, Proc. Amer. Math. Soc. {119} (1993), pp.~27--33.

\bibitem{Fox_calc1}
R.~H. {Fox}, \emph{Free differential calculus I}, Ann. of Math. {57} (1953),
  pp.~547--560.

\bibitem{Fox_calc2}
\bysame, \emph{Free differential calculus II}, Ann. of Math. {59} (1954),
  pp.~196--210.

\bibitem{Fox_calc3}
\bysame, \emph{Free differential calculus III}, Ann. of Math. {64} (1956),
  pp.~407--419.

\bibitem{Kapovich_Miasnikov:2002}
I.~{Kapovich} and A.~G. {Miasnikov}, \emph{Stallings foldings and subgroups of
  free groups}, J. Algebra {248} (2002), pp.~608--668.

\bibitem{Kargapolov-Rem:1966}
M.~I. {Kargapolov} and V.~N. {Remeslennikov}, \emph{{The conjugacy problem for
  free solvable groups}}, Algebra i Logika Sem. {5} (1966), pp.~15--25.
  (Russian).

\bibitem{Lyndon-Schupp:2001}
R.~{Lyndon} and P.~{Schupp}, \emph{Combinatorial Group Theory}, Classics in
  Mathematics. Springer, 2001.

\bibitem{Magnus:1939}
W.~{Magnus}, \emph{{On a theorem of Marshall Hall}}, Ann. of Math. {40} (1939),
  pp.~764--768.

\bibitem{Magnus-Karrass-Solitar:1977}
W.~{Magnus}, A.~{Karrass}, and D.~{Solitar}, \emph{Combinatorial Group Theory}.
  Springer-Verlag, 1977.

\bibitem{Malcev:1960}
A.~{Malcev}, \emph{{On free solvable groups}}, Soviet Math. Doklady {1} (1960),
  pp.~65--68.

\bibitem{Matthews:1966}
J.~{Matthews}, \emph{{The conjugacy problem in wreath products and free
  metabelian groups}}, Trans. Amer. Math. Soc. {121} (1966), pp.~329--339.

\bibitem{Miasnikov_Romankov_Ushakov_Vershik:2010}
A.~G. {Miasnikov}, V.~{Romankov}, A.~{Ushakov}, and A.~{Vershik}, \emph{The
  word and geodesic problems in free solvable groups}, Trans. Amer. Math. Soc.
  {362} (2010), pp.~4655--4682.

\bibitem{Motwani-Raghavan:1995}
R.~{Motwani} and P.~{Raghavan}, \emph{{Randomized Algorithms}}. Cambridge
  University Press, 1995.

\bibitem{Naik-Regan-Sivakumar:1995}
A.~{Naik}, K.~{Regan}, and D.~{Sivakumar}, \emph{{On quasilinear-time
  complexity theory}}, Theoret. Comput. Sci. {148} (1995), pp.~325--349.

\bibitem{Rabin:1976}
M.~{Rabin}, \emph{{Probabilistic algorithms}}. Algorithms and complexity: new
  directions and recent results, pp.~21--39. Academic Press, 1976.

\bibitem{Rabin:1980}
\bysame, \emph{{Probabilistic tests for primality}}, J. of Number Theory {12}
  (1980), pp.~128--138.

\bibitem{Remeslennikov-Sokolov:1970}
V.~N. {Remeslennikov} and V.~G. {Sokolov}, \emph{{Certain properties of Magnus
  embedding}}, Algebra i Logika {9} (1970), pp.~566--578.

\bibitem{Schonhage-Strassen:1971}
A.~{Sch\"onhage} and V.~{Strassen}, \emph{{Schnelle multiplikation gro{\ss}er
  zahlen}}, Computing {7} (1971), pp.~281--292.

\bibitem{Solovay-Strassen:1977}
R.~{Solovay} and V.~{Strassen}, \emph{{A fast Monte-Carlo test for primality}},
  SIAM J. Comput. {6} (1977), pp.~84--85.

\bibitem{Stallings:1983}
J.~{Stallings}, \emph{{Topology of finite graphs}}, Invent. Math. {71} (1983),
  pp.~551--565.

\bibitem{Vassilieva:2011}
S.~{Vassilieva}, \emph{{Polynomial time conjugacy in wreath products and free
  solvable groups}}, Groups Complex. Cryptol. {3} (2011), pp.~105--120.

\bibitem{Vershik_Dobrynin:2004}
A.~M. {Vershik} and S.~{Dobrynin}, \emph{{Geometrical approach to the free
  sovable groups}}, Int. J. Algebra Comput. {15} (2005), pp.~1243--1260.

\end{thebibliography}
\end{document}